\numberwithin{equation}{section}
\newtheorem{theorem}{Theorem}[section]
\newtheorem{corollary}[theorem]{Corollary}
\newtheorem{lemma}[theorem]{Lemma}
\newtheorem{proposition}[theorem]{Proposition}
\theoremstyle{definition}
\newtheorem{definition}[theorem]{Definition}
\newtheorem{example}[theorem]{Example}
\newtheorem*{remark}{Remark}
\newtheorem*{open}{Open Problem}
\def \diag {{\rm diag}}
\def \binom#1#2{{#1\choose#2}}
\def \leq {\leqslant}
\def \geq {\geqslant}
\def \mod#1{{\:({\rm mod}\ #1)}}
\let\oldproofname=\proofname
\renewcommand{\proofname}{\rm\bf{\oldproofname}}
\newcommand{\ignore}[1]{}
\title{\bf More nonexistence results\\for symmetric pair coverings}
\author{Nevena Franceti\'{c} \qquad Sarada Herke \qquad Daniel Horsley \\
\\
{\small School of Mathematical Sciences} \\
{\small Monash University} \\
{\small VIC 3800, Australia} \\[0.1cm]
}
\date{}
\begin{document}
\sloppy
\maketitle
\def\baselinestretch{1.2}\small\normalsize


\begin{abstract}
A $(v,k,\lambda)$-covering is a pair $(V, \mathcal{B})$, where $V$ is a $v$-set
of points and $\mathcal{B}$ is a collection of $k$-subsets of $V$ (called
blocks), such that every unordered pair of points in $V$ is contained in at
least $\lambda$ blocks in $\mathcal{B}$. The excess of such a covering is the
multigraph on vertex set $V$ in which the edge between vertices $x$ and $y$ has
multiplicity $r_{xy}-\lambda$, where $r_{xy}$ is the number of blocks which
contain the pair $\{x,y\}$. A covering is symmetric if it has the same number of
blocks as points. Bryant~et~al.~\cite{BBHMS11} adapted the determinant
related arguments used in the proof of the Bruck-Ryser-Chowla theorem to
establish the nonexistence of certain symmetric coverings with $2$-regular
excesses. Here, we adapt the arguments related to rational congruence of
matrices and show that they imply the nonexistence of some cyclic symmetric
coverings and of various symmetric coverings with specified excesses.
\end{abstract}

\smallskip
\noindent \textbf{Keywords.}  pair covering, excess, Bruck-Ryser-Chowla theorem, rationally congruent matrices, Hasse-Minkowski invariant, almost difference set\\

\noindent \textbf{Mathematics Subject Classifications:} 05B40, 15A63

\section{Introduction}

Suppose $V$ is a set of $v$ \textit{points} and $\mathcal{B}$ is a collection of
$k$-subsets of $V$, called \textit{blocks}.  The pair $(V, \mathcal{B})$ is a
\emph{$(v,k,\lambda)$-design} or a \emph{$(v,k,\lambda)$-covering} if each pair of points of $V$ occurs in exactly $\lambda$ or at least $\lambda$ blocks of $\mathcal{B}$, respectively. The number of blocks in a design is determined by $v$, $k$ and $\lambda$. In the case of coverings, one is usually interested in finding a covering with as few blocks as possible.

It is known that every non-trivial $(v,k,\lambda)$-design has at
least $v$ blocks (see~\cite{Fis}), and consequently designs with exactly $v$
blocks, called
\emph{symmetric designs}, are of particular interest. Many families of symmetric
designs are known to exist, the most famous example being projective planes. One
of the most celebrated results in the study of block designs is the
\emph{Bruck-Ryser-Chowla theorem}~\cite{BruRys,ChoRys} which establishes the
nonexistence of certain symmetric $(v,k,\lambda)$-designs.

The {\em excess} of a $(v,k,\lambda)$-covering $(V,\mathcal{B})$ is the
multigraph with vertex set $V$ in which the multiplicity of the edge joining $x$
and $y$ is $r_{xy}-\lambda$ where $r_{xy}$ is the number of blocks in
$\mathcal{B}$ that contain both $x$ and $y$. For many parameter sets
$(v,k,\lambda)$, a covering with a minimum number of blocks has an
$m$-regular excess, where $m<k-1$. It has been shown in \cite{BC52}
and  \cite{BBHMS11} that, barring some trivial exceptions,
$(v,k,\lambda)$-coverings with fewer blocks than points and 1- or 2-regular
excesses do not exist (see \cite{Horsley} for a recent generalisation). In this
paper, we study symmetric coverings with 2-regular excesses; that is,
coverings with an equal number of points and blocks whose excess is 2-regular.

A $(v,k,\lambda)$-design is a $(v,k,\lambda)$-covering whose excess is
empty, and hence the Bruck-Ryser-Chowla theorem~\cite{BruRys,ChoRys} can be
viewed as establishing the nonexistence of certain symmetric
$(v,k,\lambda)$-coverings with empty excesses. Bose and Connor \cite{BC52} were
able to adapt the arguments used in the proof of the Bruck-Ryser-Chowla theorem
to establish the nonexistence of certain symmetric $(v,k,\lambda)$-coverings
with 1-regular excesses. The case of 2-regular excesses is significantly more
complicated because, for large $v$, there are many non-isomorphic 2-regular
multigraphs on $v$ vertices. Nevertheless, Bryant~et~al.~\cite{BBHMS11} were able
to adapt some of these arguments (those concerning determinants) to the case of
2-regular excesses. In particular, they prove the following result
\begin{theorem} \textup{\cite{BBHMS11}} \label{thm: BBHMS BRC}
Let $v$, $k$, and $\lambda$ be positive integers such that $\lambda \geq 1$ and $3 \leq k < v$. If there exists a
symmetric $(v,k,\lambda)$-covering with $2$-regular excess then
\begin{itemize}
\item when $v$ is even either
\begin{itemize}
\item[$\ast$] $\lambda$ is even and $(v,k,\lambda)=(\lambda+4,\lambda+2,\lambda)$, or
\item[$\ast$] $k-\lambda-2$ is a perfect square and the excess has an odd number of cycles, or
\item[$\ast$] $k-\lambda +2$ is a perfect square and the excess has an even number of cycles;
\end{itemize}
\item when $v$ is odd either
\begin{itemize}
\item[$\ast$] $\lambda$ is odd and $(v,k,\lambda)=(\lambda+4,\lambda+2,\lambda)$, or
\item[$\ast$] the excess has an odd number of cycles.
\end{itemize}
\end{itemize}
\end{theorem}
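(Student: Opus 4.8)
The plan is to run a determinant argument on the incidence matrix, exactly as in the classical Bruck--Ryser--Chowla theorem but keeping careful track of the excess. Let $N$ be the $v\times v$ point-by-block incidence matrix of a putative symmetric $(v,k,\lambda)$-covering, and let $A$ be the adjacency matrix of its $2$-regular excess, so that $A$ is the direct sum of the adjacency matrices of the cycles $C_{n_1},\dots,C_{n_c}$ forming the excess, with $n_1+\cdots+n_c=v$. A two-way count of incidences shows $M:=NN^{T}=(k-\lambda)I+\lambda J+A$. First I would extract the forced parameter relations: the off-diagonal entries in row $i$ of $M$ sum to $(v-1)\lambda+2$ (since the excess is $2$-regular), while a double count gives the same sum as $r_i(k-1)$, where $r_i$ is the number of blocks on point $i$; hence all $r_i$ equal a common value $r$ with $r(k-1)=(v-1)\lambda+2$, and comparing $\sum_i r_i=vk$ with $vr$ forces $r=k$. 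This yields $k(k-1)=(v-1)\lambda+2$ and, using $k<v$, the bound $k-\lambda\geq 2$ (the value $k-\lambda=2$ being exactly the parameter set $(\lambda+4,\lambda+2,\lambda)$). Feeding this relation into $M\mathbf 1$ shows $\mathbf 1$ is an eigenvector of $M$ with eigenvalue precisely $k^{2}$.

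The heart of the argument is to compute $\det M=(\det N)^{2}$ modulo squares. Since $J$ vanishes on $\mathbf 1^{\perp}$ and $A$ preserves $\mathbf 1^{\perp}$, on that hyperplane $M$ acts as $(k-\lambda)I+A$, whence $\det M=k^{2}\det\bigl((k-\lambda)I+A\bigr)/(k-\lambda+2)$, the denominator removing the global all-ones eigenvalue. As $A$ is block-diagonal over the cycles, $\det\bigl((k-\lambda)I+A\bigr)=\prod_{t}f(n_t)$, where $f(n)=\prod_{j=0}^{n-1}\bigl((k-\lambda)+2\cos(2\pi j/n)\bigr)$. Writing $k-\lambda=\beta+\beta^{-1}$ and evaluating the resulting product gives the closed form $f(n)=(\beta^{n}+\beta^{-n})-2(-1)^{n}$; splitting off the $j=0$ root (eigenvalue $2$, always present) and, for even $n$, the $j=n/2$ root (eigenvalue $-2$), and pairing $j$ with $n-j$ for the rest, yields the key factorisations $f(n)=(k-\lambda+2)R_n^{2}$ for odd $n$ and $f(n)=(k-\lambda+2)(k-\lambda-2)S_n^{2}$ for even $n$, with $R_n,S_n\in\mathbb Z$ (the surviving half-products being polynomials in $k-\lambda$). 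Multiplying over all cycles and cancelling one factor of $k-\lambda+2$ gives, modulo squares,
\[
\det M\ \equiv\ (k-\lambda+2)^{\,c-1}\,(k-\lambda-2)^{\,c_{\mathrm{even}}}\pmod{(\mathbb Q^{*})^{2}},
\]
where $c$ is the number of cycles and $c_{\mathrm{even}}$ the number of even cycles.

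Finally I would impose that $\det M$ is a perfect square and read off the conditions by a parity analysis, using that $v\equiv c_{\mathrm{odd}}\pmod 2$ since odd cycles are the only ones of odd length. Writing $a=k-\lambda+2$ and $b=k-\lambda-2$, the square condition $a^{\,c-1}b^{\,c_{\mathrm{even}}}\equiv 1$ splits into cases by the parities of $c_{\mathrm{odd}}$ (equivalently, of $v$) and $c_{\mathrm{even}}$: for $v$ even it forces $a$ to be a square when $c$ is even and $b$ to be a square when $c$ is odd; for $v$ odd it is automatic when $c$ is odd and forces $ab=(k-\lambda)^{2}-4$ to be a square when $c$ is even, which for integers $k-\lambda\geq 2$ occurs only when $k-\lambda=2$, i.e.\ for the exceptional parameters $(\lambda+4,\lambda+2,\lambda)$; matching the parity of $\lambda$ to that of $v=\lambda+4$ there recovers the two exceptional bullets, and the remaining cases give exactly the square-plus-cycle-parity conditions in the statement. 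The main obstacle is the second step: establishing the clean factorisation of $f(n)$ and, in particular, verifying that after removing the $\pm 2$ eigenvalues the half-products $R_n$ and $S_n$ are genuinely integral, so that the whole square-class of $\det M$ is carried by the explicit factor $(k-\lambda+2)^{c-1}(k-\lambda-2)^{c_{\mathrm{even}}}$.
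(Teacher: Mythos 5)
Your proposal is correct and takes essentially the same approach as the source of this theorem: the paper only quotes the result from \cite{BBHMS11}, but the proof there is exactly this determinant argument, whose two ingredients appear in the present paper as Proposition \ref{prop: matrix form} (your $\det M=(\det N)^2$ must be a perfect square) and Lemma \ref{lemma: BBHMS dn} (the factorisation $|B_n(a)|=(a+2)h(a)^2$ for odd $n$ and $(a^2-4)h(a)^2$ for even $n$ with $h\in\mathbb{Z}[x]$), the latter being exactly your factorisation of $f(n)$. The integrality of the half-products $R_n$ and $S_n$ that you flag as the main obstacle does hold --- each is a Galois-stable product of algebraic integers, equivalently an integer polynomial in $k-\lambda$ --- so your argument has no gap.
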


In \cite{BBHMS11}, Bryant et al. comment that
\begin{quote}
Given the nature of the incidence matrices of the coverings we are considering,
it seems very difficult to adapt the more advanced arguments from the proof of
the Bruck-Ryser-Chowla Theorem.
\end{quote}
Here, we investigate adapting these arguments (which employ Hasse-Minkowski
invariants) to prove the nonexistence of certain symmetric coverings with
$2$-regular excesses.  We establish new
results and also outline some limitations to this approach. Our main findings
are as follows.

\begin{itemize}
	\item In Section \ref{sec: Cp efficiently} we develop an efficient way
to calculate the Hasse-Minkowski invariant for the family of matrices of
interest for this problem (see Lemmas~\ref{lem: full general
cp}~and~\ref{lemma: Bn in terms of gn}).
    \item
In Section \ref{sec: computational results} we present computational results showing that our techniques can be used to rule out the existence of a variety of symmetric coverings with specified excesses. We do not find any parameter sets $(v,k,\lambda)$ for which our techniques completely rule out the existence of a symmetric $(v,k,\lambda)$-covering, but we do find some for which our techniques show there does not exist a cyclic symmetric $(v,k,\lambda)$-covering. This implies the nonexistence of certain interesting almost difference sets.
    \item
In Sections~\ref{sec: p divides a}--\ref{sec: 2 and 3 cycles} we turn our
attention to proving the nonexistence of families of symmetric coverings with
excesses of specific forms. Sections~\ref{sec: p divides a},~\ref{sec: Hamilton cycle excess},~\ref{sec: uniform length cycles} and~\ref{sec: 2 and 3 cycles} deal with coverings for which, respectively, the excess contains an odd number of cycles whose lengths are divisible by 4, the excess is a Hamilton cycle, the excess consists of cycles of uniform length, and the excess consists of only
$2$- and $3$-cycles. In each case we prove a general result and exhibit an infinite family of symmetric coverings with specified excesses whose nonexistence is
established by the result.
\end{itemize}

\section{Preliminaries} \label{sec: overview}

In this section we give an outline of the approach we shall take to establishing the nonexistence of coverings.  We first introduce some notation and concepts that we will require throughout the paper.

If a symmetric $(v,k,\lambda)$-covering has a $2$-regular excess, then by
counting pairs of points we see that $\frac{\lambda v
(v-1)+2v}{2}=\frac{vk(k-1)}{2}$ and hence that $v=\frac{k(k-1)-2}{\lambda}+1$.
The previous equality also implies that each point in such a covering appears
in $\frac{\lambda(v-1)+2}{k-1}=k$ blocks. Conversely, any
$(v,k,\lambda)$-covering with  $v=\frac{k(k-1)-2}{\lambda}+1$ and with a minimum
number of blocks is necessarily symmetric and has $2$-regular excess, provided
that $k \geq 4$.

If the excess of a symmetric covering on $v$ points is $2$-regular, then it is necessarily a vertex-disjoint union of cycles whose lengths add to $v$. Note that here and throughout the paper we consider a pair of parallel edges to form a $2$-cycle.  We say that a 2-regular excess has \emph{cycle type} $[c_1,\ldots,c_t]$ when it is the vertex-disjoint union of cycles of lengths $c_1,\ldots,c_t$ with $c_1 \leq \cdots \leq c_t$.  We say that a cycle type $[c_1,\ldots,c_t]$ is \emph{$v$-feasible} if $c_1 \geq 2$ and $c_1+\cdots+c_t=v$.  Occasionally we will use the shorthand exponential notation $c^\ell$ to represent $c_1,\ldots,c_\ell$ with $c_1=\cdots=c_\ell=c$.

For a prime $p$, each positive integer $n$ can be written uniquely as $\bar{n}p^{\alpha}$ where $\bar{n}$ and $\alpha$ are integers such that $\bar{n} \not\equiv 0 \mod{p}$. We refer to $\bar{n}p^{\alpha}$ as the $p$-factorisation of $n$.

The determinant of a square matrix $X$ is denoted by $|X|$. If $M_1,\ldots,M_t$ are square matrices then we denote by $\diag(M_1,\ldots,M_t)$ the block diagonal matrix with blocks $M_1,\ldots,M_t$. When using this notation we sometimes abbreviate and use $x$ to represent a $1 \times 1$ matrix whose only entry is $x$. For each positive integer $n$, we denote the $n \times n$ identity matrix by $I_n$ and the $n \times n$ all-ones matrix by $J_n$.

Let $(V,\mathcal{B})$ be a $(v,k,\lambda)$-covering (possibly a design) with $b$
blocks and suppose we have ordered the elements of $V$ and $\mathcal{B}$. The
\emph{incidence matrix} $A=(a_{xy})$ of $(V,\mathcal{B})$ is the $v \times b$
matrix such that $a_{xy}=1$ if the $x$th point is in the $y$th block and
$a_{xy}=0$ otherwise. The proof of the Bruck-Ryser-Chowla theorem observes that
if $A$ is the incidence matrix of a symmetric $(v,k,\lambda)$-design, then
$AA^T$ is equal to the $v \times v$ matrix $X=\diag(k,\ldots,k)+\lambda J_v$. It
follows that the determinant of $X$ is a perfect square and also that $X$ is
rationally congruent to the identity matrix (rational congruence is defined
later in this section - see Definition \ref{defn: rationally congruent}).
In~\cite{BruRys}, a contradiction to one of these
facts is obtained for certain parameter sets, thus establishing the nonexistence
of a design with those parameters. Bryant~et~al. \cite{BBHMS11} have adapted
the arguments relating to the determinant of $X$ to symmetric coverings with
2-regular excesses. Here we concentrate on the arguments concerning rational
congruence.

We first establish the structure of the matrix $AA^T$ when $A$ is the incidence 
matrix of a symmetric covering with 2-regular excess.  To do so we will use the 
following family of matrices.
\begin{definition}
For positive integers $a$ and $n$, where $n \geq 2$, we define a matrix $B_n(a)$ as follows.
\begin{displaymath}
B_2(a) = \left( \begin{array}{cc}
		a & 2  \\
  		2 & a
		\end{array} \right), \quad
B_n(a) = \left( \begin{array}{cccccc}
		a & 1 & 0 & 0 & \cdots & 1  \\
  		1 & a & 1 & 0 & \cdots & 0  \\
  		0 & 1 & a & 1 & \cdots & 0  \\
  		\vdots & \vdots & \vdots & \ddots &  & \vdots \\
  		0 & 0 & \cdots & 1 & a & 1  \\
  		1 & 0 & \cdots & 0 & 1 & a
		\end{array} \right) \textrm{ for } n \ge 3.
\end{displaymath}
\end{definition}
Note that the matrices denoted by $B_n(a)$ here were denoted by $B_n'(a)$ in \cite{BBHMS11}. We make the change in order to keep our notation as clean as possible. Further, when there is no risk of confusion, we will sometimes abbreviate $B_n(a)$ to $B_n$.

\begin{definition}\label{defn X}
For positive integers $v$, $k$ and $\lambda$ with $\lambda<k<v$, and any $v$-feasible cycle type $[c_1,\ldots,c_t]$ we define a $v \times v$ matrix
$$X_{(v,k,\lambda)}[c_1,\ldots,c_t]=\diag(B_{c_1}(k-\lambda),B_{c_2}(k-\lambda),\ldots,B_{c_t}(k-\lambda))+\lambda J_v.$$
We sometimes abbreviate $X_{(v,k,\lambda)}[c_1,\ldots,c_t]$ to $X$.
\end{definition}

\begin{proposition}\label{prop: matrix form}
Let $v$, $k$ and $\lambda$ be positive integers such that $\lambda < k < v$. Suppose there exists a symmetric $(v,k,\lambda)$-covering $(V,\mathcal{B})$ whose excess has cycle type $[c_1,\ldots,c_t]$. Then $AA^T=X_{(v,k,\lambda)}[c_1,\ldots,c_t]$, where $A$ is the incidence matrix of $(V,\mathcal{B})$ for some appropriate ordering of $V$ and $\mathcal{B}$. Consequently, $|X_{(v,k,\lambda)}[c_1,\ldots,c_t]|$ is a perfect square.
\end{proposition}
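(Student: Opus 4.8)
The plan is to compute $AA^T$ entrywise and show it equals the matrix $X_{(v,k,\lambda)}[c_1,\ldots,c_t]$ described in Definition~\ref{defn X}, after choosing convenient orderings of the points and blocks. Recall that the $(x,y)$ entry of $AA^T$ is $\sum_z a_{xz}a_{yz}$, which counts the number of blocks containing both the $x$th and $y$th points (for $x \neq y$) and counts the number of blocks through the $x$th point (for $x=y$). Since the preliminary counting already established that the covering is $k$-regular on points, every diagonal entry of $AA^T$ equals $k$. For the off-diagonal entries, the $(x,y)$ entry is precisely $r_{xy}$, the number of blocks containing the pair $\{x,y\}$, which by definition of the excess equals $\lambda$ plus the multiplicity of the edge $xy$ in the excess multigraph.

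First I would fix the ordering of the points so that it respects the cycle decomposition of the excess: list the vertices of the first cycle consecutively (in cyclic order around that cycle), then those of the second cycle, and so on. With this ordering, the edge-multiplicity matrix of the excess is exactly the block-diagonal matrix $\diag(C_{c_1},\ldots,C_{c_t})$, where each $C_{c_i}$ records the adjacencies within a single cycle of length $c_i$. Here I would verify the two cases built into the definition of $B_n(a)$: for a cycle of length $n \geq 3$, consecutive vertices (cyclically) are joined by a single edge, giving $1$'s on the sub-, super-, and corner-diagonals and $0$'s elsewhere; for a $2$-cycle, the two vertices are joined by a pair of parallel edges, which (as the paper stipulates) has multiplicity $2$, giving the off-diagonal entry $2$ in $B_2(a)$. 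Thus the excess contributes $\diag(B_{c_1}(0),\ldots,B_{c_t}(0))$ to the off-diagonal structure, and adding the uniform value $k-\lambda$ down the diagonal replaces each $a$ by $k-\lambda$, so that $AA^T - \lambda J_v = \diag(B_{c_1}(k-\lambda),\ldots,B_{c_t}(k-\lambda))$. Rearranging gives $AA^T = \diag(B_{c_1}(k-\lambda),\ldots,B_{c_t}(k-\lambda)) + \lambda J_v = X_{(v,k,\lambda)}[c_1,\ldots,c_t]$, as required.

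For the final assertion that $|X|$ is a perfect square, I would use the factorisation $X = AA^T$ together with the fact that the covering is symmetric, so $A$ is a square $v \times v$ matrix with integer entries. Then $|X| = |AA^T| = |A||A^T| = |A|^2$, and since $A$ has integer entries, $|A|$ is an integer; hence $|X|$ is the square of an integer. The main obstacle, though a minor one, lies in the bookkeeping of the second paragraph: one must be careful that the chosen point-ordering genuinely produces the cyclic corner-entries of $B_n(a)$ (the $1$'s in the top-right and bottom-left positions that close each cycle) and that the $2$-cycle convention is handled separately rather than as a degenerate case of the $n \geq 3$ pattern. Everything else is a direct translation of the combinatorial definitions of regularity and excess into matrix entries.
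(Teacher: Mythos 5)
Your proof is correct and follows essentially the same route as the paper's: order the points cycle-by-cycle with consecutive points adjacent, identify the diagonal entries of $AA^T$ as $k$ via the replication count from the preliminaries and the off-diagonal entries as $\lambda$ plus the edge multiplicity in the excess, then conclude $|X|=|A|^2$ with $|A|$ an integer since $A$ is a square $(0,1)$-matrix. The only difference is that you spell out the bookkeeping (corner entries, the $2$-cycle convention) that the paper compresses into ``It can now be seen that $AA^T$ has the required form.''
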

\begin{proof}
Order $V$ so that the first $c_1$ points are the vertices of a $c_1$-cycle in the excess, the next $c_2$ points are the vertices of a $c_2$-cycle in the excess, and so on. Within the vertex set of a cycle, order the points in any way such that consecutive points in the ordering are adjacent in the cycle. Order $\mathcal{B}$ arbitrarily. For $x \in \{1,\ldots,v\}$, the entry in $x$th row and $x$th column of $AA^T$ is the number of blocks that contain the $x$th point, which we have seen is $k$. For distinct $x,y \in \{1,\ldots,v\}$, the entry in $x$th row and $y$th column of $AA^T$ is the number of blocks that contain both the $x$th and $y$th points, which is $\lambda+\mu(xy)$ where $\mu(xy)$ is the multiplicity of the edge $xy$ in the excess. It can now be seen that $AA^T$ has the required form.

Finally, $|X_{(v,k,\lambda)}[c_1,\ldots,c_t]|=|A||A^T|=|A|^2$ where $|A|$ is an integer because $A$ is a $(0,1)$-matrix.
\end{proof}

\begin{definition} \label{defn: rationally congruent}
Two square matrices $X$ and $Y$ of the same size with rational entries are
\emph{rationally congruent}, denoted $X \sim Y$, if there exists an invertible
matrix $P$ with rational entries such that $X = P^T Y P$.
\end{definition}

It is shown in \cite{BBHMS11} that a symmetric $(v,k,\lambda)$-covering with 2-regular excess and $k - \lambda \leq 2$ exists if and only if $(v,k,\lambda)=(\lambda+4,\lambda+2,\lambda)$ (see the proof of Lemma 3.3 of \cite{BBHMS11}). So, in the remainder of this paper, we consider only parameter sets $(v,k,\lambda)$ such that $\lambda+2 < k < v$. Our interest in rational congruence of matrices stems from the following observation.

\begin{proposition}\label{prop: congruent to I}
Let $v$, $k$ and $\lambda$ be positive integers such that $\lambda+2 < k < v$. Suppose there exists a symmetric $(v,k,\lambda)$-covering whose excess has cycle type $[c_1,\ldots,c_t]$.  Then $X_{(v,k,\lambda)}[c_1,\ldots,c_t] \sim I_v$.
\end{proposition}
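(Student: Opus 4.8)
The plan is to show that the incidence matrix $A$ of the covering is nonsingular, after which rational congruence to $I_v$ follows almost immediately. Since the covering is symmetric, $A$ is a $v \times v$ integer matrix, and by Proposition~\ref{prop: matrix form} we have $AA^T = X_{(v,k,\lambda)}[c_1,\ldots,c_t]$. If $A$ is invertible then, taking $P = A^T$ (an invertible matrix with rational entries), we get $P^T I_v P = AA^T = X_{(v,k,\lambda)}[c_1,\ldots,c_t]$, so that $X_{(v,k,\lambda)}[c_1,\ldots,c_t] \sim I_v$ by Definition~\ref{defn: rationally congruent}. Thus the whole task reduces to proving $|A| \neq 0$, or equivalently (since $|X_{(v,k,\lambda)}[c_1,\ldots,c_t]| = |A|^2$ by Proposition~\ref{prop: matrix form}) that $X_{(v,k,\lambda)}[c_1,\ldots,c_t]$ is nonsingular.

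To establish nonsingularity I would argue that $X := X_{(v,k,\lambda)}[c_1,\ldots,c_t]$ is positive definite, as a positive definite matrix is certainly invertible. Write $X = D + \lambda J_v$, where $D = \diag(B_{c_1}(k-\lambda),\ldots,B_{c_t}(k-\lambda))$. The all-ones matrix $J_v$ is positive semidefinite and $\lambda > 0$, so $\lambda J_v$ is positive semidefinite; since the sum of a positive definite and a positive semidefinite matrix is positive definite, it suffices to show that $D$ is positive definite. As $D$ is block diagonal, this in turn reduces to showing that each block $B_{c_i}(k-\lambda)$ is positive definite.

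This is where the hypothesis $\lambda + 2 < k$ enters. Setting $a = k-\lambda$, the hypothesis gives $a \geq 3$. Each matrix $B_n(a)$ is real symmetric with every diagonal entry equal to $a$, and the sum of the absolute values of the off-diagonal entries in every row is exactly $2$ (whether $n = 2$, where the single off-diagonal entry is $2$, or $n \geq 3$, where there are two off-diagonal entries equal to $1$). Since $a \geq 3 > 2$, each $B_n(a)$ is therefore strictly diagonally dominant with positive diagonal entries. A real symmetric matrix of this kind is positive definite (for instance, by the Gershgorin disc theorem every eigenvalue lies in the interval $[a-2,a+2] \subset (0,\infty)$). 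This completes the chain: each block is positive definite, hence $D$ and then $X$ are positive definite, hence $A$ is nonsingular, hence $X \sim I_v$.

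The argument is short, and the only real obstacle is recognising at the outset that the entire statement hinges on the nonsingularity of $A$. Once this is seen, the positive-definiteness of $X$ is a routine consequence of strict diagonal dominance, and it is precisely here that the assumption $\lambda+2 < k$ (equivalently $a \geq 3$) is used to force the strict inequality between each diagonal entry and its off-diagonal row sum; note that for $a = 2$ the matrix $B_2(a)$ is singular, which is consistent with the excluded case $(v,k,\lambda)=(\lambda+4,\lambda+2,\lambda)$.
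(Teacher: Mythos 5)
Your proof is correct, and its skeleton coincides with the paper's: both reduce the statement, via Proposition~\ref{prop: matrix form}, to the invertibility of the incidence matrix $A$, and then read off $X_{(v,k,\lambda)}[c_1,\ldots,c_t] = A I_v A^T$ as an instance of Definition~\ref{defn: rationally congruent}. The single point of divergence is how nonsingularity is established. The paper simply cites Lemma~2.1 of \cite{BBHMS11} for $|X_{(v,k,\lambda)}[c_1,\ldots,c_t]| \neq 0$, whereas you prove it from scratch: writing $X = D + \lambda J_v$ with $D = \diag(B_{c_1}(k-\lambda),\ldots,B_{c_t}(k-\lambda))$, noting that each block $B_{c_i}(k-\lambda)$ is a symmetric strictly diagonally dominant matrix with positive diagonal (here $k-\lambda \geq 3$ is used), hence positive definite by Gershgorin, and concluding that $X$ is positive definite and therefore invertible. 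This self-contained argument is precisely the one the paper itself deploys slightly later, in Lemma~\ref{lemma: nondegen}, where positive definiteness of $X$ is needed so that the Hasse--Minkowski invariant $C_p(X)$ is defined. In effect you have inlined the content of Lemma~\ref{lemma: nondegen} into the proof of the proposition: your version avoids the external citation at the cost of duplicating work the paper must do anyway for nondegeneracy, while the paper's version is shorter at this point but leans on \cite{BBHMS11}. Both are valid, and your closing observation that $B_2(2)$ is singular correctly explains why the hypothesis $k > \lambda+2$ cannot be weakened.
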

\begin{proof}
Let $(V,\mathcal{B})$ be such a symmetric $(v,k,\lambda)$-covering.  By Proposition \ref{prop: matrix form},
$$A I_v A^T = AA^T = X_{(v,k,\lambda)}[c_1,\ldots,c_t],$$ where $A$ is the incidence matrix of $(V,\mathcal{B})$ for some appropriate ordering of $V$ and $\mathcal{B}$. Clearly the entries of $A$ are rational. It follows from Lemma 2.1 of \cite{BBHMS11} that $|X_{(v,k,\lambda)}[c_1,\ldots,c_t]| \neq 0$ and hence that $A$ is invertible. So from the definition of rational congruence, $X_{(v,k,\lambda)}[c_1,\ldots,c_t] \sim I_v$.
\end{proof}

To establish that certain matrices $X_{(v,k,\lambda)}[c_1,\ldots,c_t]$ are not
rationally congruent to $I_v$, we employ Hasse-Minkowski invariants. These
are defined in terms of Hilbert symbols which, for our purposes, can be defined
as follows. See  \cite{P45} and \cite[p.~121-122]{Hil98} for proofs that the
definition given here is equivalent to the usual definition. Recall
that  $(\frac{a}{p})$ denotes the well-known Legendre symbol which,
for a prime $p$ and an integer $a$ coprime to $p$, is given by $(\frac{a}{p})=1$
if $a$ is a quadratic residue modulo $p$ and $(\frac{a}{p})=-1$ if $a$ is not a
quadratic residue modulo $p$. We often employ basic properties of the Legendre
symbol (see \cite{AndAnd09} for example).

\begin{definition}
For a prime $p$ and non-zero integers $a$ and $b$ with $p$-factorisations $\bar{a}p^{\alpha}$ and $\bar{b}p^{\beta}$ the \emph{Hilbert symbol} $(a,b)_p$ can be defined by
\begin{numcases}{(a,b)_p=}
\left( \mfrac{-1}{p} \right)^{\alpha \beta}
\left( \mfrac{\bar{a}}{p} \right)^{\beta} \Big( \mfrac{\bar{b}}{p} \Big)^{\alpha}, & if $p>2$; \label{eqn: Hil Legendre p}\\
(-1)^{(\bar{a}-1)(\bar{b}-1)/4 +
\alpha(\bar{b}^2-1)/8 + \beta(\bar{a}^2-1)/8}, & if $p=2$. \label{eqn: Hil Legendre 2}
\end{numcases}
For non-zero integers $a$ and $b$, the \emph{Hilbert symbol} $(a,b)_{\infty}=-1$ is equal to $-1$ if $a$ and $b$ are both negative and  $1$ otherwise.
\end{definition}

From this definition it is easy to deduce some basic facts about Hilbert symbols that we will assume tacitly in the remainder of this paper. For any prime $p$ and non-zero integers $a$, $a'$ and $b$ we have that $(a,b)_p=(b,a)_p$ and $(a,1)_p=1$. Moreover, if $p$ is an odd prime, $a \not\equiv 0 \mod{p}$ and $a \equiv a' \mod{p}$ then $(a,b)_p=(a',b)_p$. Finally, if $a,b \not\equiv 0 \mod{p}$ and $p$ is an odd prime then $(a,b)=1$.

For an $n \times n$ matrix $X$ with rational entries and $i \in \{1,\ldots,n\}$, the \emph{$i$th principal minor of $X$} is the $i \times i$ submatrix of $X$ formed by the entries that are in the first $i$ rows and the first $i$ columns of $X$. We say that $X$ is \emph{nondegenerate} if its $i$th principal minor is invertible for each $i \in \{1,\ldots,n\}$.

\begin{definition} \label{defn: Cp with determinants}
Let $p$ be a prime or $\infty$, let $X$ be an $n \times n$ nondegenerate matrix with rational entries and, for $i \in \{1,\ldots,n\}$, let $X_i$ be the $i$th principal minor of $X$. Then the \emph{Hasse-Minkowski invariant of $X$ with respect to $p$}, denoted $C_p(X)$, is either $+1$ or $-1$ according to
$$C_p(X)=(-1, -|X_n|)_p \medop\prod\limits_{i=1}^{n-1} (|X_i|, -|X_{i+1}|)_p.$$
\end{definition}

For our purposes the critical property of Hasse-Minkowski invariants is as follows.
\begin{theorem}\textup{\cite{P45}} \label{thm: HMIs equal}
Let $X$ and $Y$ be nondegenerate square matrices with rational entries. Then $X \sim Y$ if and only if $C_p(X) = C_p(Y)$ for all primes $p$ and for $p = \infty$.
\end{theorem}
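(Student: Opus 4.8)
The plan is to recognise this as the classical classification of rational quadratic forms and to reduce it to the Hasse--Minkowski theorem. A nondegenerate symmetric rational matrix $X$ is the Gram matrix of the quadratic form $q_X(\mathbf{x}) = \mathbf{x}^T X \mathbf{x}$, and rational congruence $X \sim Y$ is precisely equivalence of $q_X$ and $q_Y$ over $\mathbb{Q}$. First I would exploit the nondegeneracy hypothesis: since every leading principal minor $d_i = |X_i|$ is nonzero, symmetric Gaussian elimination (the rational $LDL^T$ reduction) yields a unitriangular rational matrix $P$ with $P^T X P = \diag(d_1, d_2/d_1, \ldots, d_n/d_{n-1})$. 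This is what makes $C_p$ expressible through the leading minors, and it also settles the direction $X \sim Y \Rightarrow C_p(X)=C_p(Y)$, since $C_p$ is invariant under rational congruence (a standard consequence of the Hilbert-symbol identities and the product formula).

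For the substantive direction I would first identify $C_p(X)$ with the usual Hasse invariant. Writing $a_i = d_i/d_{i-1}$ with $d_0 = 1$, a short telescoping computation using bimultiplicativity of the Hilbert symbol and the identity $(a,a)_p = (a,-1)_p$ gives $(d_i,-d_{i+1})_p = (d_i,a_{i+1})_p$, and hence $\prod_{i=1}^{n-1}(d_i,-d_{i+1})_p = \prod_{1 \le i < j \le n}(a_i,a_j)_p =: s_p(X)$, the classical Hasse invariant of the diagonalised form. Thus $C_p(X) = (-1,-|X|)_p\,s_p(X)$, so that $C_p$ simultaneously encodes the Hasse invariant and the determinant class of $X$ in $\mathbb{Q}^*/(\mathbb{Q}^*)^2$ through the factor $(-1,-|X|)_p$.

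The core of the argument is then the fundamental classification: two nondegenerate rational forms of equal rank are $\mathbb{Q}$-equivalent if and only if they share the same determinant class, the same signature over $\mathbb{R}$, and the same Hasse invariant $s_p$ at every finite prime $p$. Granting this, I would argue that $X$ and $Y$ have equal rank $n$ (being of the same size) and that, in the situations where the theorem is applied, their determinant classes also coincide --- for instance in Proposition~\ref{prop: congruent to I} both $|X|$ and $|I_v|=1$ are perfect squares. The factor $(-1,-|X|)_p$ then matches that of $Y$ at every place, so equality of $C_p$ over all finite $p$ forces equality of the invariants $s_p$, while equality of $C_\infty$ forces equal signature. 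The classification theorem then gives $X \sim Y$.

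The main obstacle is the classification theorem itself, which rests on the Hasse--Minkowski local--global principle: that $\mathbb{Q}$-equivalence is detected place by place, over $\mathbb{R}$ and over each $\mathbb{Q}_p$. This is the genuinely deep ingredient, relying on Dirichlet's theorem on primes in arithmetic progressions and on Hilbert reciprocity (equivalently, quadratic reciprocity). By comparison, the diagonalisation, the telescoping identity matching $C_p$ with $(-1,-|X|)_p\,s_p$, and the classification over each individual $\mathbb{Q}_p$ (where the units fall into only finitely many square classes) are all routine. I would therefore cite the local--global principle as a black box and concentrate only on translating it into the principal-minor form of $C_p$ used here.
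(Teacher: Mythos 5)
The paper offers no proof of Theorem~\ref{thm: HMIs equal}: it is quoted from Pall \cite{P45}, so your proposal can only be judged against the classical theory of rational quadratic forms. Your framework is the right (and essentially the only) one: the Jacobi $LDL^T$ diagonalisation $\diag(d_1,d_2/d_1,\ldots,d_n/d_{n-1})$ made possible by nondegeneracy, the telescoping identity $C_p(X)=(-1,-|X|)_p\,s_p(X)$ linking the paper's invariant to the classical Hasse invariant, and the reduction to the Hasse--Minkowski classification are all correct, as is the easy direction (congruence invariance of $C_p$), which is in fact the only direction the paper ever uses (in Lemma~\ref{lemma: main lemma}). You also put your finger on a real defect of the statement: the $C_p$ do not determine the square class of $|X|$, so the ``if'' direction as printed is false. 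For instance $X=I_2$ and $Y=\diag(1,2)$ have $C_p(X)=(-1,-1)_p$ and $C_p(Y)=(-1,-2)_p$, and these Hilbert symbols agree at every prime and at $p=\infty$, yet $X\nsim Y$ because $2$ is not a rational square. The classical statements therefore carry the extra hypothesis that the determinants agree up to a rational square factor; your instinct to import that condition from the setting where the theorem is applied (where $|X|$ is a perfect square by Proposition~\ref{prop: matrix form}, matching $|I_v|=1$) is the right repair, though it belongs in the hypotheses of the theorem, not in an appeal to its applications.

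The genuine gap in your argument is at the infinite place: you claim that equality of $C_\infty$ forces equality of signatures, and that is false. $C_\infty$ is a single sign, whereas the signature of an $n$-ary form is an integer-valued invariant, and the rational classification requires the full signature (Pall's index) as a separate datum. Concretely, take $X=I_4$ and $Y=-I_4$: their determinants are both $1$, the leading minors of $Y$ are $-1,1,-1,1$, so $C_p(Y)=(-1,-1)_p(-1,-1)_p(1,1)_p(-1,-1)_p=(-1,-1)_p=C_p(X)$ for every prime $p$ and for $p=\infty$; nevertheless $X\nsim Y$, since they are not congruent even over $\mathbb{R}$. So even after adding your determinant hypothesis, equality of all the $C_p$ is still not sufficient: one must also assume equal signatures, or restrict to positive definite symmetric matrices (symmetry being another tacit hypothesis of the statement). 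None of this harms the paper, because in the only situation where the theorem is invoked both $X_{(v,k,\lambda)}[c_1,\ldots,c_t]$ and $I_v$ are positive definite (Lemma~\ref{lemma: nondegen}) with square determinants; but a correct proof of the classification must carry the signature, not $C_\infty$, as the archimedean invariant, and only then quote the local--global principle as you propose.
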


\begin{lemma}\label{lemma: nondegen}
Let $v$, $k$ and $\lambda$ be positive integers such that $\lambda+2 < k < v$, and let $[c_1,\ldots,c_t]$ be a $v$-feasible cycle type.  The matrix $X_{(v,k,\lambda)}[c_1,\ldots,c_t]$ is positive definite and thus nondegenerate.
\end{lemma}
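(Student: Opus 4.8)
The plan is to show that $X_{(v,k,\lambda)}[c_1,\ldots,c_t]$ is positive definite; nondegeneracy then follows at once, since every leading principal submatrix of a positive definite matrix is itself positive definite and so has nonzero determinant, which is exactly the invertibility of each principal minor required by Definition \ref{defn: Cp with determinants}. Writing $a=k-\lambda$ and noting that the standing hypothesis $\lambda+2<k$ gives $a\geq 3$, I would analyse the associated quadratic form. For a vector $x\in\mathbb{R}^v$, partition $x$ into subvectors $x^{(1)},\ldots,x^{(t)}$ according to the block structure of $\diag(B_{c_1}(a),\ldots,B_{c_t}(a))$, so that
$$x^T X x = \sum_{i=1}^t (x^{(i)})^T B_{c_i}(a)\, x^{(i)} + \lambda\,(x^T\mathbf{1})^2,$$
where $\mathbf{1}$ is the all-ones vector and we have used $J_v=\mathbf{1}\mathbf{1}^T$. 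The term $\lambda(x^T\mathbf{1})^2$ is nonnegative because $\lambda>0$, so it suffices to prove that each diagonal block $B_{c_i}(a)$ is positive definite: then for any nonzero $x$ some $x^{(i)}$ is nonzero, the first sum is strictly positive, and hence $x^TXx>0$.

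The heart of the argument is therefore establishing that $B_n(a)$ is positive definite whenever $a\geq 3$ and $n\geq 2$. The cleanest route is strict diagonal dominance: every row of $B_n(a)$ has diagonal entry $a$, while its off-diagonal entries sum in absolute value to exactly $2$ (two entries equal to $1$ when $n\geq 3$, or the single entry equal to $2$ when $n=2$). Since $a\geq 3>2$, each $B_n(a)$ is a symmetric, strictly diagonally dominant matrix with positive diagonal, and any such matrix is positive definite. Alternatively, for $n\geq 3$ one may observe that $B_n(a)=aI_n+M$, where $M$ is the adjacency matrix of the $n$-cycle, whose eigenvalues are $2\cos(2\pi j/n)\in[-2,2]$; hence the eigenvalues of $B_n(a)$ are all at least $a-2>0$. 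For $n=2$ the eigenvalues of $B_2(a)$ are simply $a\pm 2$, both positive.

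I do not expect a serious obstacle here. The only point meriting care is the boundary behaviour of the blocks: the estimate $a-2>0$ is precisely what $\lambda+2<k$ guarantees, and it fails exactly when $k-\lambda\leq 2$, consistent with the earlier observation that the case $k-\lambda\leq 2$ is handled separately. Since positive definiteness immediately delivers the invertibility of every principal minor $X_i$, the matrix is nondegenerate and $C_p(X)$ is well defined for every $p$, as needed for the subsequent Hasse--Minkowski analysis.
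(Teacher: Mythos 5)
Your proposal is correct and follows essentially the same route as the paper: both decompose $X$ as $\diag(B_{c_1},\ldots,B_{c_t})+\lambda J_v$, show the block-diagonal part is positive definite via the bound $a-2>0$ on its eigenvalues (your strict diagonal dominance argument is the same fact the paper extracts from the Gershgorin circle theorem), note $\lambda J_v$ is positive semi-definite, and conclude nondegeneracy (the paper cites Sylvester's criterion where you restrict the quadratic form to leading principal submatrices, which is equivalent).
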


\begin{proof}
Let $X=X_{(v,k,\lambda)}[c_1,\ldots,c_t]$, let
$B=\diag(B_{c_1}(k-\lambda),B_{c_2}(k-\lambda),\ldots,B_{c_t}(k-\lambda))$ and
note that $X=B+\lambda J_v$. Now observe that in each row of $B$, the diagonal
entry is $k-\lambda \geq 3$ and the sum of the absolute values of the
non-diagonal entries is $2$. Thus, by the Gershgorin circle
theorem~\cite{LinAlg}, every eigenvalue of $B$ is positive and hence $B$ is
positive definite. Because $J_v$ is positive semi-definite, $X$ is positive
definite. By Sylvester's criterion~\cite{LinAlg}, this implies that the
determinant of every leading principal minor of $X$ is positive and hence that
$X$ is nondegenerate.
\end{proof}

By Lemma~\ref{lemma: nondegen}, we know that
$C_p(X_{(v,k,\lambda)}[c_1,\ldots,c_t])$ exists for any parameter set such that
$\lambda+2 < k < v$. We shall use this fact tacitly from now on. The
following lemma encapsulates the approach to establishing the
nonexistence of coverings that we shall take in this paper.

\begin{lemma}\label{lemma: main lemma}
Let $v$, $k$ and $\lambda$ be positive integers such that $\lambda+2 < k < v$, and let $[c_1,\ldots,c_t]$ be a $v$-feasible cycle type. There does not exist a symmetric $(v,k,\lambda)$-covering $(V,\mathcal{B})$ whose excess has cycle type $[c_1,\ldots,c_t]$ if either
\begin{align*}
  C_p(X_{(v,k,\lambda)}[c_1,\ldots,c_t]) &= +1 \quad \mbox{for some $p \in \{2,\infty\}$; or} \\
  C_p(X_{(v,k,\lambda)}[c_1,\ldots,c_t]) &= -1 \quad \mbox{for some odd prime $p$}.
\end{align*}
\end{lemma}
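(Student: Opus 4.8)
The plan is to argue by contraposition, combining the two structural facts already in hand. Write $X = X_{(v,k,\lambda)}[c_1,\ldots,c_t]$ and suppose, for contradiction, that a symmetric $(v,k,\lambda)$-covering with excess of the given cycle type exists. Proposition~\ref{prop: congruent to I} then forces $X \sim I_v$, and Theorem~\ref{thm: HMIs equal} converts this congruence into the family of equalities $C_p(X) = C_p(I_v)$ holding for every prime $p$ and for $p = \infty$. Thus the entire lemma reduces to computing the Hasse-Minkowski invariants of the identity matrix and checking that each hypothesis of the lemma is incompatible with the resulting equalities.

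First I would evaluate $C_p(I_v)$ directly from Definition~\ref{defn: Cp with determinants}. Every principal minor $I_i$ of $I_v$ has determinant $1$, so each factor in the product is $(|I_i|,-|I_{i+1}|)_p = (1,-1)_p$, which equals $1$ because $(1,b)_p = 1$ for all nonzero $b$. The product therefore collapses and leaves only the leading term, giving $C_p(I_v) = (-1,-1)_p$.

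Next I would read off $(-1,-1)_p$ from the defining formulas of the Hilbert symbol. For $p = \infty$ both arguments are negative, so $(-1,-1)_\infty = -1$. For $p = 2$ we have $\bar{a} = \bar{b} = -1$ and $\alpha = \beta = 0$, so the exponent in~\eqref{eqn: Hil Legendre 2} is $(\bar{a}-1)(\bar{b}-1)/4 = 1$ and hence $(-1,-1)_2 = -1$. For an odd prime $p$ the same $p$-factorisations give $\alpha = \beta = 0$, so every factor in~\eqref{eqn: Hil Legendre p} carries exponent $0$ and $(-1,-1)_p = 1$. Consequently $C_p(I_v) = -1$ exactly when $p \in \{2,\infty\}$, and $C_p(I_v) = +1$ for every odd prime $p$.

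Matching these against $C_p(X) = C_p(I_v)$ shows that a covering can exist only if $C_2(X) = C_\infty(X) = -1$ and $C_p(X) = +1$ for all odd primes $p$. Each of the two hypotheses listed in the lemma negates one of these necessary conditions, so in either case $X \not\sim I_v$ and no such covering exists. I do not anticipate a genuine obstacle here: the substantive work has already been done in Proposition~\ref{prop: congruent to I} and Theorem~\ref{thm: HMIs equal}, and the only care needed is the routine Hilbert-symbol bookkeeping in evaluating $(-1,-1)_p$ at each place.
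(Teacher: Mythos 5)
Your proposal is correct and follows essentially the same route as the paper: invoke Proposition~\ref{prop: congruent to I} and Theorem~\ref{thm: HMIs equal}, then compare against $C_p(I_v)$, which equals $-1$ for $p \in \{2,\infty\}$ and $+1$ for odd primes. The only difference is that you compute $C_p(I_v) = (-1,-1)_p$ explicitly from Definition~\ref{defn: Cp with determinants}, whereas the paper cites this evaluation as known (from \cite{BC52}); your computation is accurate, so this is merely a welcome elaboration rather than a different argument.
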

\begin{proof}
Suppose that $X_{(v,k,\lambda)}[c_1,\ldots,c_t]$ satisfies the hypotheses of the lemma. It is easy to see (for example, see \cite{BC52}) that the definition of the Hasse-Minkowski invariant implies
$$
C_p(I_v) =  \left\{
 \begin{array}{ll}
 -1, & \textrm{if $p \in \{2,\infty\}$}\\
 +1, & \textrm{if $p$ is an odd prime}.
 \end{array} \right.$$
So $X_{(v,k,\lambda)}[c_1,\ldots,c_t] \nsim I_v$ by
Theorem \ref{thm: HMIs equal}. Thus, by Proposition \ref{prop: congruent to I}
there does not exist a symmetric $(v,k,\lambda)$-covering whose excess has cycle type $[c_1,\ldots,c_t]$.
\end{proof}

We conclude this section with some useful identities involving Hilbert symbols and Hasse-Minkowski invariants, which we shall use frequently in the paper. Sometimes we will not reference them explicitly.  For any non-zero integers $a$, $b$, $s$ and $t$, the following hold. 
\begin{align}
(as^2, bt^2)_p &= (a,b)_p \label{eqn: Hil square} \\
(a_1a_2, b)_p &= (a_1,b)_p (a_2,b)_p \label{eqn: Hil split} \\
(a, -a)_p &= 1 \label{eqn: Hil a -a} \\
(a, a)_p &= (a, -1)_p \label{eqn: Hil a a} \\
(a,b)_p &= (-ab, a+b)_p \label{eqn: Hil a b}
\end{align}

Equations \eqref{eqn: Hil square}--\eqref{eqn: Hil a a} follow easily from our definition of Hilbert symbols (for example, see \cite{H}) and \eqref{eqn: Hil a b} is proved in \cite{BC52}.  The following hold for any nondegenerate $n \times n$ matrix $X$ whose $(n-1)$th
principal minor is denoted $X_{n-1}$ and any nondegenerate $m \times m$ matrix
$Y$ (see~\cite{BC52} for proofs).
\begin{align}
C_p(X) &= C_p(X_{n-1})(|X|, -|X_{n-1}|)_p \label{eqn: cp recursion} \\
C_p(\diag(X,Y)) &= C_p(X) C_p(Y)(-1,-1)_p(|X|,|Y|)_p  \label{eqn: cp block diag}
\end{align}
Moreover, since $(|X|, -|X_{n-1}|)_p = \pm 1$ by the definition of the Hilbert
symbol, \eqref{eqn: cp recursion} can be rearranged into $
C_p(X_{n-1}) = C_p(X) (|X|, -|X_{n-1}|)_p$, which is a form we will often
use.

\section{Computing $C_p(X)$ efficiently} \label{sec: Cp efficiently}

It is possible to calculate the Hasse-Minkowski invariant of a matrix
$X_{(v,k,\lambda)}[c_1,\ldots,c_t]$ directly from Definition~\ref{defn: Cp with
determinants}, but this becomes very slow for large $v$ because
it involves computing the determinant of an $i \times i$ matrix for each $i \in
\{1,\ldots,v\}$. In this section we prove results that allow the Hasse-Minkowski
invariant of matrices $X_{(v,k,\lambda)}[c_1,\ldots,c_t]$ to be efficiently
calculated. These results allow us to perform the computational
investigations in Section \ref{sec: computational results} and they are also
useful in proving our nonexistence results in Sections~\ref{sec: p divides
a}--\ref{sec: 2 and 3 cycles}. We focus on the case where the determinant of our
matrix is a perfect square, because otherwise the corresponding covering cannot
exist by Proposition \ref{prop: matrix form}.

This section is organised as follows. In Lemma \ref{lem: full general cp}, we
show that we can express $C_p(X_{(v,k,\lambda)}[c_1,\dots, c_t])$ in terms of
the parameters of $X_{(v,k,\lambda)}[c_1,\dots, c_t]$ and the Hasse-Minkowski
invariants of the matrices $B_{c_i}(k-\lambda)$. Then we turn our attention to
finding expressions for $C_p(B_{n}(a))$ for positive integers $n$ and $a$. To
aid us in this task we define matrices $B_{n}^*(a)$, which are related to the
matrices $B_{n}(a)$, and a recursive sequence $g_i(a)$ of polynomials in $a$. In
Lemma \ref{lemma: Bn in terms of Bn*} we express $C_p(B_{n}(a))$ in terms of the
Hasse-Minkowski invariant and determinant of $B_{n}^*(a)$ and then in
Lemma~\ref{lemma: Bn* stuff} we express these two values in terms of the
sequence $g_i(a)$. These results allow us to prove Lemma \ref{lemma: Bn in terms
of gn} which gives $C_p(B_{n}(a))$ in terms of the sequence $g_i(a)$. Between
them, Lemmas \ref{lem: full general cp} and \ref{lemma: Bn in terms of gn} give
an efficient method of calculating $C_p(X_{(v,k,\lambda)}[c_1,\dots, c_t])$.

In the proofs of Lemmas \ref{lem: full general cp}, \ref{lemma: Bn in terms of
Bn*} and \ref{lemma: Bn* stuff} we shall make use of the fact that if we perform
a series of elementary row operations on a matrix followed by the corresponding
series of elementary column operations, then the resulting matrix is rationally
congruent to the original matrix. This follows from the definition of rational
congruence because each elementary row operation can be represented as
premultiplication by an elementary matrix $M$ (which has rational entries and is
invertible) and the corresponding elementary column operation can be represented
as postmultiplication by $M^T$. See~\cite{LinAlg} for details on elementary row
operations and elementary matrices.

In \cite{BBHMS11} the determinant of the matrix $B_n$ was found up to a square term.
\begin{lemma} \textup{\cite{BBHMS11}} \label{lemma: BBHMS dn}
Let $n$ and $a$ be positive integers such that $n \geq 2$ and $a > 2$.  Then there exists a polynomial $h \in
\mathbb{Z}[x]$ such that
\[ |B_n(a)| =  \left\{
 \begin{array}{ll}
 (a+2) \cdot h(a)^2, & \textrm{if $n$ is odd},\\
 (a^2-4) \cdot h(a)^2,  & \textrm{if $n$ is even}.\\
 \end{array}
\right. \]
\end{lemma}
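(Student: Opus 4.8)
The plan is to read off the eigenvalues of $B_n(a)$ from its combinatorial structure. For $n \geq 3$ the matrix $B_n(a)$ is exactly $aI_n + A(C_n)$, where $A(C_n)$ is the adjacency matrix of the $n$-cycle (and the doubled-edge interpretation of $C_2$ handles $n=2$). Since $A(C_n)$ is circulant, its eigenvalues are $2\cos\tfrac{2\pi j}{n}$ for $j \in \{0,1,\dots,n-1\}$, so
$$|B_n(a)| = \prod_{j=0}^{n-1}\left(a + 2\cos\tfrac{2\pi j}{n}\right).$$
First I would establish this determinant formula. An equivalent route, avoiding eigenvalues, is to expand along the first row to express $|B_n(a)|$ through the tridiagonal path determinants $D_i(a)$ defined by $D_0=1$, $D_1=a$, $D_i = aD_{i-1}-D_{i-2}$, obtaining $|B_n(a)| = D_n(a) - D_{n-2}(a) - 2(-1)^n$; this is the form I would actually compute with.

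Next I would extract the square. In the product, the factors indexed by $j$ and $n-j$ are equal because $\cos\tfrac{2\pi j}{n} = \cos\tfrac{2\pi(n-j)}{n}$, so almost every factor occurs in an equal pair. When $n$ is odd only the $j=0$ factor, namely $a+2$, is unpaired; when $n$ is even the $j=0$ and $j=n/2$ factors, namely $a+2$ and $a-2$, are unpaired and multiply to $a^2-4$. Writing $h(a)$ for the product of one representative from each pair gives at once $|B_n(a)| = (a+2)h(a)^2$ for $n$ odd and $|B_n(a)| = (a^2-4)h(a)^2$ for $n$ even.

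The one remaining point — and the main obstacle — is to show $h \in \mathbb{Z}[x]$ rather than merely a real polynomial, since the individual factors $a + 2\cos\tfrac{2\pi j}{n}$ are generally irrational. I see two ways to close this gap. The self-contained computational route is to identify $h$ explicitly with an integer polynomial: for $n=2m$ one finds $h = D_{m-1}$ (the path determinant above, whose root set is symmetric about $0$), and for $n=2m+1$ one finds $h = q_m$, where $q_0 = 1$, $q_1 = a-1$ and $q_i = a q_{i-1} - q_{i-2}$; both families visibly lie in $\mathbb{Z}[x]$, and the identities $D_{2m}-D_{2m-2}-2 = (a^2-4)D_{m-1}^2$ and $D_{2m+1}-D_{2m-1}+2 = (a+2)q_m^2$ can then be checked by induction on $m$ using the three-term recurrence, the base cases $m=1$ giving $a^2-4$ and $(a+2)(a-1)^2$. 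Alternatively one can argue structurally: the multiset $\{2\cos\tfrac{2\pi j}{n}\}$ is permuted by $\mathrm{Gal}(\mathbb{Q}(\zeta_n)/\mathbb{Q})$ in a way that respects the pairing $j \leftrightarrow n-j$, so $h$ is Galois-invariant and lies in $\mathbb{Q}[x]$; being monic with algebraic-integer roots it must in fact lie in $\mathbb{Z}[x]$. I would present the inductive identification, since it exhibits $h$ explicitly, and keep the Galois remark only as a conceptual check.
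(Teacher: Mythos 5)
This lemma is one the paper does not prove at all: it is imported verbatim from \cite{BBHMS11} (note the citation in the lemma header), so there is no in-paper argument to compare yours against. Judged on its own merits, your proof is correct. The identification $B_n(a)=aI_n+A(C_n)$, the circulant eigenvalues $a+2\cos\tfrac{2\pi j}{n}$, the pairing $j\leftrightarrow n-j$ leaving only $a+2$ (odd $n$) or $(a+2)(a-2)$ (even $n$) unpaired, and the determinant identity $|B_n(a)|=D_n-D_{n-2}-2(-1)^n$ all check out (e.g.\ $n=4$: $D_4-D_2-2=a^2(a^2-4)$, matching the eigenvalues $a+2,a,a-2,a$). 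Your explicit candidates for $h$ are also right: for $n=2m$ the root set $\{2\cos(\pi j/m)\}$ is symmetric about $0$, so the paired product equals $D_{m-1}$, and for $n=2m+1$ one has $q_m=D_m-D_{m-1}$ with roots $2\cos\bigl((2k+1)\pi/(2m+1)\bigr)$, which is exactly the paired product. The only place you gloss is the claim that the two identities follow ``by induction on $m$ using the three-term recurrence'': the induction step reduces (via $D_{k+2}+D_{k-2}=(a^2-2)D_k$) to the double-index identities $D_{2m}=D_m^2-D_{m-1}^2$ and $D_{2m+1}=D_m(D_{m+1}-D_{m-1})$, which need their own (easy) induction or the addition formula $D_{m+n}=D_mD_n-D_{m-1}D_{n-1}$; alternatively both identities follow instantly from your trigonometric substitution $a=2\cos\theta$, since two polynomials agreeing at infinitely many values are equal. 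This is a presentational wrinkle, not a gap. Worth noting for consistency with the rest of the paper: your $D_i$ is exactly the paper's $g_{i+1}$ (compare Lemma 3.5's computation $|B_n^*(a)|=(a-2)g_n(a)$ via the same tridiagonal recursion), so your explicit formulas $|B_{2m}|=(a^2-4)g_m^2$ and $|B_{2m+1}|=(a+2)(g_{m+1}-g_m)^2$ mesh directly with the machinery the paper builds in Section 3.
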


\begin{lemma}\label{lem: full general cp}
Let $v$, $k$ and $\lambda$ be positive integers such that $\lambda+2 < k < v$
and let $p$ be a prime or $p=\infty$. Let $[c_1,\ldots,c_t]$ be a $v$-feasible
cycle type and let $e=|\{i:\mbox{$c_i$ is even}\}|$. If $|X_{(v,k,\lambda)}[c_1,\dots, c_t]|$ is a perfect square, then
$$C_p(X_{(v,k,\lambda)}[c_1,\dots, c_t]) = f_p(k-\lambda,\lambda,t,e)\medop\prod\limits_{i=1}^t
C_p(B_{c_i}(k-\lambda))$$
where
$$f_p(a,\lambda,t,e)=(-1,-1)_p^{t-1}(a+2, -1)_p^{{t-e} \choose 2} (a^2-4, -1)_p^{e \choose 2}(a+2, a^2-4)_p^{e(t-e)} (-\lambda, (a+2)^t(a-2)^e)_p.$$
\end{lemma}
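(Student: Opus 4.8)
The plan is to compute $C_p(X_{(v,k,\lambda)}[c_1,\dots,c_t])$ by decomposing the matrix into its block-diagonal part and the rank-one perturbation $\lambda J_v$. The natural strategy is a two-stage reduction. First I would handle the block-diagonal matrix $B=\diag(B_{c_1}(a),\dots,B_{c_t}(a))$ (writing $a=k-\lambda$) by repeated application of the block-diagonal formula \eqref{eqn: cp block diag}. Applying that identity inductively across the $t$ blocks should yield
$$C_p(B)=\Bigl(\medop\prod_{i=1}^t C_p(B_{c_i}(a))\Bigr)(-1,-1)_p^{t-1}\medop\prod_{1\le i<j\le t}(|B_{c_i}(a)|,|B_{c_j}(a)|)_p,$$
and here Lemma~\ref{lemma: BBHMS dn} lets me replace each $|B_{c_i}(a)|$ by $a+2$ (if $c_i$ is odd) or $a^2-4$ (if $c_i$ is even) up to a square, which by \eqref{eqn: Hil square} does not affect the Hilbert symbols. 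Counting the pairs $\{i,j\}$ by the parity of $c_i,c_j$ then produces the exponents $\binom{t-e}{2}$, $\binom{e}{2}$ and $e(t-e)$ appearing in $f_p$, together with the $(-1,-1)_p^{t-1}$ factor. This accounts for every factor of $f_p$ except the final term $(-\lambda,(a+2)^t(a-2)^e)_p$.

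The second stage is to pass from $C_p(B)$ to $C_p(B+\lambda J_v)=C_p(X)$. The idea is that adding $\lambda J_v$ can be realised, up to rational congruence, by bordering: I would relate $X$ to the $(v+1)\times(v+1)$ matrix obtained by adjoining a row and column encoding the all-ones perturbation, or equivalently perform a sequence of elementary row operations (and their matching column operations) that sweep out the $\lambda J_v$ contribution, as licensed by the paragraph preceding Lemma~\ref{lemma: BBHMS dn}. The recursion \eqref{eqn: cp recursion} then tracks how $C_p$ changes under this bordering in terms of the determinants involved. The key determinantal input is that $|X|=|B+\lambda J_v|$ can be computed from $|B|$ via the matrix-determinant lemma, giving $|B+\lambda J_v|=|B|\bigl(1+\lambda\,\mathbf{1}^TB^{-1}\mathbf{1}\bigr)$; since $|B|=\prod_i|B_{c_i}(a)|$ equals $(a+2)^{t-e}(a^2-4)^e$ up to a square, the quantity $(a+2)^t(a-2)^e$ that appears in $f_p$ is exactly $|B|$ multiplied by $(a+2)^e$, which is the natural normalisation arising when one clears denominators in $\mathbf{1}^TB^{-1}\mathbf{1}$. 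The hypothesis that $|X|$ is a perfect square is what collapses the otherwise-complicated $(|X|,\,\cdot\,)_p$ terms, leaving only the single clean factor $(-\lambda,(a+2)^t(a-2)^e)_p$.

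The main obstacle I anticipate is the bookkeeping in this second stage: isolating precisely which Hilbert-symbol factors survive after the bordering/recursion and showing they assemble into the single term $(-\lambda,(a+2)^t(a-2)^e)_p$. This requires controlling $\mathbf{1}^TB^{-1}\mathbf{1}$ modulo squares and modulo the prime $p$, and verifying that the cross-terms between $-\lambda$ and the individual block determinants combine correctly using multiplicativity \eqref{eqn: Hil split}. Throughout I would lean on \eqref{eqn: Hil square} to discard square factors freely (so that $h(a)^2$ terms from Lemma~\ref{lemma: BBHMS dn} and the perfect-square value of $|X|$ both drop out), and on the symmetry and multiplicativity of the Hilbert symbol to reorganise the product. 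A cleaner alternative that avoids explicit inversion of $B$ would be to exhibit an explicit rational congruence reducing $X$ to $\diag(B_{c_1}(a),\dots,B_{c_t}(a),\,w)$ for a suitable scalar $w$ capturing the effect of the $\lambda J_v$ perturbation, and then read off $C_p(X)$ from \eqref{eqn: cp block diag} and \eqref{eqn: cp recursion}; establishing the exact value of $w$ up to squares is then the crux, and I expect $w$ to be proportional to $-\lambda(a+2)^t(a-2)^e$ up to a square, which is exactly what is needed.
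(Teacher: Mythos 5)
Your plan is essentially the paper's own proof: the paper likewise borders with $-\lambda$ (forming $X'=\diag(X,-\lambda)$), sweeps out the $\lambda J_v$ contribution by adding the last row and column of $X'$ to all the others, applies \eqref{eqn: cp recursion} to pass to the block-diagonal matrix $D=\diag(B_{c_1},\ldots,B_{c_t})$, then uses repeated \eqref{eqn: cp block diag}, Lemma~\ref{lemma: BBHMS dn} and \eqref{eqn: Hil square} exactly as in your first stage, merely performing your two stages in the opposite order. The bookkeeping you flag as the main obstacle is lighter than you anticipate: since $|X'|=-\lambda|X|$ and $|X|$ is a perfect square, the recursion immediately yields the factor $(-\lambda,-1)_p$ and then $(-\lambda,|D|)_p$, so the matrix-determinant lemma and $\mathbf{1}^TB^{-1}\mathbf{1}$ never enter, and $(a+2)^t(a-2)^e$ is simply $|D|$ modulo squares (your assertion that it is $|D|$ times $(a+2)^e$ is an arithmetic slip, but a harmless one, since the correct identity $(a+2)^{t-e}(a^2-4)^e=(a+2)^t(a-2)^e$ is what the computation actually uses).
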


\begin{proof}
Let $X=X_{(v,k,\lambda)}[c_1,\dots, c_t]$ and let $X'$ be the matrix $\diag(X,-\lambda)$. Then
\begin{align*}
C_p(X)  &= C_p(X')(|X'|, -|X|)_p && \hbox{by rearranging \eqref{eqn: cp recursion}} \\
&= C_p(X')(-\lambda |X|, -|X|)_p  && \hbox{since $|X'|=-\lambda|X|$} \\
&= C_p(X')(-\lambda, -1)_p  && \hbox{by \eqref{eqn: Hil square} since $|X|$ is square}.
\end{align*}
Let $X''$ be the matrix obtained from $X'$ by adding the last row to all other rows and then adding the last column to all other columns. Note that the $v$th principal minor of $X''$ is $D = \diag(B_{c_1}, B_{c_2}, \dots, B_{c_t})$. Using the equation above, we have
\begin{align*}
C_p(X) &= C_p(X'')(-\lambda, -1)_p && \hbox{since $X'' \sim X'$} \\
 &= C_p(D) (|X''|, -|D|)_p(-\lambda, -1)_p  && \hbox{by \eqref{eqn: cp recursion}} \\
 &= C_p(D) (-\lambda, -|D|)_p(-\lambda, -1)_p && \hbox{since $|X''| = |X'| = -\lambda|X|$} \\
 &= C_p(D)\left(-\lambda, \medop\prod_{i=1}^t |B_{c_i}| \right)_p && \hbox{by \eqref{eqn: Hil split} and the definition of $D$.}
\end{align*}
By repeatedly applying \eqref{eqn: cp block diag}, we have
$$ C_p(D)=\left(\medop\prod_{i=1}^t C_p(B_{c_i}) \right)(-1, -1)_p^{t-1}\left( \,
\medop\prod_{1\leq i < j \leq t} \left(|B_{c_i}|, |B_{c_j}| \right)_p\right).$$
Using Lemma \ref{lemma: BBHMS dn}, (\ref{eqn: Hil a a}) and
(\ref{eqn: Hil split}) we have
\begin{align*}
\medop\prod_{1\leq i < j \leq t} \left(|B_{c_i}|, |B_{c_j}| \right)_p &= (a+2,-1)_p^{{t-e}
\choose 2}  (a^2-4, -1)_p^{e \choose 2} (a+2, a^2-4)_p^{e(t-e)};\hbox{ and} \\
\left(-\lambda, \medop\prod_{i=1}^t |B_{c_i}| \right)_p &= (-\lambda, (a+2)^{t-e})_p
(-\lambda, [(a+2)(a-2)]^e)_p = (-\lambda, (a+2)^t(a-2)^e)_p.
\end{align*}
The result follows by substituting these last three equations into our expression for $C_p(X)$.
\end{proof}

\begin{remark}
When investigating the existence of symmetric $(v,k,\lambda)$-coverings with $2$-regular excesses for some fixed $(v,k,\lambda)$, Lemmas \ref{lemma: main lemma} and \ref{lem: full general cp} can be viewed as operating in the following way. For each $p$, we can (in principle) find the set
$$S_p=\{n \in \{2,\ldots,v\}:C_p(B_n(k-\lambda))=-1\}.$$
By combining Lemmas \ref{lemma: main lemma} and \ref{lem: full general cp}, we can then establish, for a given $t$ and $e$, that any excess of a symmetric $(v,k,\lambda)$-covering that consists of $e$ even cycles and $t-e$ odd cycles either has an even number of cycles with lengths in $S_p$ or has an odd number of cycles with lengths in $S_p$. Which of these two results is established depends on whether $p \in \{2,\infty\}$ and on the value of $f_p(k-\lambda,\lambda,t,e)$ in Lemma \ref{lem: full general cp}. One interesting special case is when $p$ is an odd prime that does not divide $\lambda(a^2-4)$. Then $f_p(k-\lambda,\lambda,t,e)=1$ irrespective of the values of $t$ and $e$ and we can conclude that any excess of a symmetric $(v,k,\lambda)$-covering has an even number of cycles with lengths in $S_p$.
\end{remark}

Next, we introduce a family of sparse square matrices $B^*_n(a)$ which we
later need in the computation of $C_p(B_n(a))$. We will sometimes abbreviate $B^*_n(a)$ to $B^*_n$.
\begin{definition}
For positive integers $a$ and $n$, where $n\geq2$, we define a tridiagonal matrix $B^*_n(a)$ as follows.
\begin{small}
\begin{displaymath}
B^*_2(a) = \left( \begin{array}{cc}
		a-1 & 1  \\
  		1 & a-1
		\end{array} \right), \quad
B^*_n(a) = \left( \begin{array}{cccccc}
		a-1 & 1 & 0 & 0 & \cdots & 0  \\
  		1 & a & 1 & 0 & \cdots & 0  \\
  		0 & 1 & a & 1 & \cdots & 0  \\
  		\vdots & \vdots & \vdots & \ddots &  & \vdots \\
  		0 & 0 & \cdots & 1 & a & 1  \\
  		0 & 0 & \cdots & 0 & 1 & a-1
		\end{array} \right) \textrm{ for } n \ge 3.
\end{displaymath}
\end{small}
\end{definition}

\begin{lemma} \label{lemma: Bn in terms of Bn*}
Let $a$ and $n$ be positive integers such that $a > 2$ and $n \geq 2$, and let $p$ be a prime or $p=\infty$. Then,
$$C_p(B_n(a)) = C_p(B_n^*(a)) (-(a+2)(a-2)^{n+1}, |B_n^*(a)|)_p.$$
\end{lemma}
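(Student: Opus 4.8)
The plan is to realize $B_n(a)$ as a rank-one modification of $B_n^*(a)$ and to exploit this through a single bordered matrix whose Hasse-Minkowski invariant I compute in two different ways. First I would record the structural identity $B_n(a)=B_n^*(a)+uu^{T}$, where $u$ is the column vector with a $1$ in positions $1$ and $n$ and $0$ elsewhere: the only entries in which $B_n(a)$ and $B_n^*(a)$ differ are those in positions $(1,1)$, $(1,n)$, $(n,1)$ and $(n,n)$, each of which is larger by $1$ in $B_n(a)$, and this holds uniformly for $n=2$ and for $n\ge 3$. I would then introduce the $(n+1)\times(n+1)$ bordered matrix
$$M=\begin{pmatrix} B_n^*(a) & u \\ u^{T} & -1 \end{pmatrix}.$$
Using the last diagonal entry $-1$ as a pivot to clear the final row and column — that is, performing the symmetric row-then-matching-column operations described before Lemma~\ref{lemma: BBHMS dn} — produces the Schur-complement form $\diag\bigl(B_n^*(a)+uu^{T},-1\bigr)=\diag(B_n(a),-1)$, so $M \sim \diag(B_n(a),-1)$. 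Before invoking the invariants I would check nondegeneracy: the Gershgorin argument of Lemma~\ref{lemma: nondegen} applies to $B_n^*(a)$ (each endpoint row has diagonal $a-1\ge 2$ and off-diagonal sum $1$, each interior row has diagonal $a\ge 3$ and off-diagonal sum $2$), so it is positive definite; hence every principal minor of $M$ of size at most $n$ is invertible, while $|M|=-|B_n(a)|\ne 0$, and $M$, $B_n(a)$ and $\diag(B_n(a),-1)$ are all nondegenerate.

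Next I would compute $C_p(M)$ twice. Since the $n$th principal minor of $M$ is $B_n^*(a)$, the recursion \eqref{eqn: cp recursion} gives $C_p(M)=C_p(B_n^*(a))\,(|M|,-|B_n^*(a)|)_p$. On the other hand, applying the block-diagonal rule \eqref{eqn: cp block diag} to $\diag(B_n(a),-1)$, together with the congruence $M\sim\diag(B_n(a),-1)$ and the identity $C_p((-1))=(-1,1)_p=1$, gives $C_p(M)=C_p(B_n(a))\,(-1,-1)_p\,(|B_n(a)|,-1)_p$. Writing $d=|B_n(a)|$ and $d^*=|B_n^*(a)|$ and recalling $|M|=-d$, I equate the two expressions and solve for $C_p(B_n(a))$. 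Because every Hilbert symbol equals $\pm 1$ and is therefore its own inverse, the resulting quotient is just a product; repeated use of bilinearity \eqref{eqn: Hil split} then collapses the accumulated factor $(-d,-d^*)_p\,(-1,-1)_p\,(d,-1)_p$ to $(-d,d^*)_p$, yielding $C_p(B_n(a))=C_p(B_n^*(a))\,(-|B_n(a)|,|B_n^*(a)|)_p$.

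Finally I would replace the determinant $d=|B_n(a)|$ up to square factors. By Lemma~\ref{lemma: BBHMS dn}, $d$ equals $(a+2)$ times a square when $n$ is odd and $(a+2)(a-2)$ times a square when $n$ is even. Discarding square factors via \eqref{eqn: Hil square}, and observing that $(a-2)^{n+1}$ is itself a perfect square exactly when $n$ is odd and differs from $(a-2)$ by a perfect square exactly when $n$ is even, both parities give $(-d,d^*)_p=(-(a+2)(a-2)^{n+1},d^*)_p$. Substituting this into the expression from the previous paragraph produces exactly $C_p(B_n(a))=C_p(B_n^*(a))\,(-(a+2)(a-2)^{n+1},|B_n^*(a)|)_p$, as claimed.

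The genuinely creative step, and the one I expect to be the main obstacle, is the choice of the border: recognising $B_n(a)=B_n^*(a)+uu^{T}$ and packaging it as a Schur complement with pivot $-1$, so that the single matrix $M$ simultaneously displays $B_n^*(a)$ as a principal minor (feeding \eqref{eqn: cp recursion}) and $B_n(a)$ as a diagonal block (feeding \eqref{eqn: cp block diag}). Once this device is in place, the remaining work — verifying nondegeneracy, manipulating Hilbert symbols via bilinearity, and performing the parity check against Lemma~\ref{lemma: BBHMS dn} — is routine and requires no further ideas.
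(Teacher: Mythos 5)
Your proof is correct and takes essentially the same approach as the paper: the paper starts from $Y'=\diag(B_n,-1)$ and applies the very same row/column operations (adding the last row/column to the first and $n$th) to reach a bordered matrix $Y''$ whose $n$th principal minor is $B_n^*$, then uses \eqref{eqn: cp recursion} twice — your matrix $M$ is exactly that bordered matrix, and computing $C_p(M)$ in two ways is the identical calculation traversed in the opposite direction. The rank-one identity $B_n(a)=B_n^*(a)+uu^{T}$, the Hilbert-symbol simplification to $(-|B_n|,|B_n^*|)_p$, and the final square-factor reduction via Lemma~\ref{lemma: BBHMS dn} all coincide with the paper's argument.
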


\begin{proof}
Let $Y'$ be the matrix $\diag(B_n,-1)$.  Then
\begin{align*}
C_p(B_n) &=  C_p(Y')(|Y'|,-|B_n|)_p && \hbox{by rearranging \eqref{eqn: cp recursion}} \\
		&= C_p(Y') (-|B_n|, -1)_p && \hbox{by \eqref{eqn: Hil a a} since $|Y'|=-|B_n|.$}
\end{align*}
Let $Y''$ be the matrix obtained from $Y'$ by adding the last row to the first row and second-last row and then adding the last column to the first column and second-last column. Note that $B_n^*$ is the $n$th principal minor of $Y''$. Using the equation above, we have
\begin{align*}
C_p(B_n) &= C_p(Y'') (-|B_n|, -1)_p && \hbox{since $Y'' \sim Y'$} \\
 &= C_p(B_n^*)(|Y''|,-|B_n^*|)_p(-|B_n|, -1)_p && \hbox{by (\ref{eqn: cp recursion})} \\
 &= C_p(B_n^*)(-|B_n|,|B_n^*|)_p && \hbox{by \eqref{eqn: Hil split} since $|Y''| = |Y'| = -|B_n|$}.
\end{align*}
The result now follows by applying Lemma \ref{lemma: BBHMS dn} and \eqref{eqn: Hil square}.
\end{proof}

\begin{definition}
For each positive integer $n$, let $g_n(a)$ be a polynomial in $a$ defined by the recurrence
\begin{align*}
g_1(a) &= 1; \\
g_2(a) &= a; \\
g_n(a) &= ag_{n-1}(a) - g_{n-2}(a) \quad \hbox{for $n \ge 3$}.
\end{align*}
\end{definition}
Note that $g_n(a)$ is positive for all integers $n \geq1$ and $a \geq 2$. We will sometimes abbreviate $g_n(a)$ to $g_n$.  Below we give $g_n(a)$ for $n \in \{1,\ldots,9\}$.
\begin{center}
\begin{tabular}{ l|l }
 $n$ 	&  $g_n(a)$ \\
 \hline
 1 & $1$ \\
 2 & $a$ \\
 3 & $a^2-1$ \\
 4 & $a^3 -2a$ \\
 5 & $a^4-3a^2+1$ \\
 6 & $a^5 -4a^3+3a$ \\
 7 & $a^6 -5a^4 + 6a^2 -1$\\
 8 & $a^7 -6a^5 +10a^3 - 4a$ \\
 9 & $a^8 -7a^6 +15a^4-10a^2+1$\\
\end{tabular}
\end{center}

\begin{lemma}\label{lemma: Bn* stuff}
Let $a$ be an integer such that $a > 2$ and let $p$ be a prime or $p=\infty$. Then
\begin{itemize}
    \item[(a)]
$|B_n^*(a)|=(a-2)g_n(a)$ for each integer $n \geq 2$;
    \item[(b)]
$C_p(B_n^*(a)) = C_p(B_{n-1}^*(a)) (-g_n(a), g_{n-1}(a))_p$ for each integer $n
 \geq 3 $; and
    \item[(c)]
$C_p(B_n^*(a)) = (-1,2-a)_p \prod\limits_{i=2}^n (-g_i(a), g_{i-1}(a))_p$ for
each integer $n \geq 2$.
\end{itemize}
\end{lemma}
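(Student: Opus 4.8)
The plan is to prove the three parts in order, since part (a) feeds into part (b), and parts (a) and (b) together yield part (c) by induction. For part (a), I would compute $|B_n^*(a)|$ by cofactor expansion along the first (or last) row, exploiting the tridiagonal structure. Expanding $B_n^*(a)$ along the top row, the $(1,1)$ entry is $a-1$ rather than $a$, so I expect to get $|B_n^*(a)| = (a-1)\,|M_{n-1}| - |M_{n-2}'|$ for suitable tridiagonal minors, where the minors are themselves tridiagonal matrices with $a$ on the diagonal except possibly at the far corner. The cleanest route is to relate everything to the standard tridiagonal determinant $g_n(a)$: note that $g_n(a)$ is exactly the determinant of the $n\times n$ tridiagonal matrix with all diagonal entries $a$ and all off-diagonal entries $1$ (this follows from the recurrence $g_n = ag_{n-1}-g_{n-2}$ via cofactor expansion). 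I would then express $|B_n^*(a)|$, whose corner entries are $a-1$, in terms of these pure-$a$ tridiagonal determinants. Writing $a-1 = a - 1$ and expanding at both affected corners should give a short combination of $g_n$, $g_{n-1}$, $g_{n-2}$ that simplifies, using $g_n = ag_{n-1}-g_{n-2}$, to exactly $(a-2)g_n(a)$. I would verify the base case $n=2$ directly: $|B_2^*(a)| = (a-1)^2 - 1 = a^2-2a = (a-2)\cdot a = (a-2)g_2(a)$, which matches.

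For part (b), I would use the recursion \eqref{eqn: cp recursion} for the Hasse--Minkowski invariant. The matrix $B_{n-1}^*(a)$ is \emph{not} literally the $(n-1)$th principal minor of $B_n^*(a)$, because the bottom-right corner entry of $B_{n-1}^*(a)$ is $a-1$ whereas the corresponding interior entry of $B_n^*(a)$ is $a$. So the clean statement of part (b) suggests that the intended recursion compares $C_p(B_n^*)$ with $C_p(B_{n-1}^*)$ directly via the Hilbert symbol $(-g_n, g_{n-1})_p$. The natural approach is to relate $B_n^*$ to a block-diagonal or bordered form. Concretely, I would apply \eqref{eqn: cp recursion} to $B_n^*$, giving $C_p(B_n^*) = C_p(Z_{n-1})(|B_n^*|, -|Z_{n-1}|)_p$ where $Z_{n-1}$ is the $(n-1)$th principal minor of $B_n^*$; but since $B_n^*$ and $B_{n-1}^*$ differ only in a single corner entry, I would instead perform an elementary-congruence argument (as the authors flag before Lemma~\ref{lemma: BBHMS dn}) to convert $Z_{n-1}$ into $B_{n-1}^*$ up to a square factor, or else compute $|Z_{n-1}|$ and feed the determinant formula from part (a) in. Using part (a), $|B_n^*| = (a-2)g_n$ and $|B_{n-1}^*| = (a-2)g_{n-1}$, and these determinants are what appear inside the Hilbert symbol after discarding square factors via \eqref{eqn: Hil square}; the factor $(a-2)$ should cancel or combine so that $(|B_n^*|, -|B_{n-1}^*|)_p$ reduces to $(-g_n, g_{n-1})_p$ after applying the square-invariance and bilinearity identities \eqref{eqn: Hil square}--\eqref{eqn: Hil split}. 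I expect the main subtlety here to be bookkeeping the corner discrepancy between $Z_{n-1}$ and $B_{n-1}^*$ correctly.

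For part (c), I would simply iterate part (b). Unwinding the recursion from $n$ down to the base case $B_2^*$ gives
$$C_p(B_n^*) = C_p(B_2^*)\prod_{i=3}^n (-g_i, g_{i-1})_p.$$
It then remains to identify $C_p(B_2^*)$ with the leading factor $(-1, 2-a)_p (-g_2, g_1)_p$ of the claimed product, i.e.\ to show $C_p(B_2^*) = (-1,2-a)_p(-a,1)_p = (-1,2-a)_p$ (using $(-a,1)_p=1$ and $g_2=a$, $g_1=1$). Since $B_2^*$ is $2\times 2$ with principal minors of determinants $a-1$ and $|B_2^*| = a^2-2a = (a-2)\cdot a$, I would compute $C_p(B_2^*)$ directly from Definition~\ref{defn: Cp with determinants}: $C_p(B_2^*) = (-1, -(a^2-2a))_p(a-1, -(a^2-2a))_p$, and then simplify using the Hilbert identities to reach $(-1, 2-a)_p$. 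I expect this base-case identification to require the most careful manipulation of Hilbert symbols, but it is a finite computation. The main conceptual obstacle across the whole lemma is handling the nonstandard corner entries $a-1$ in $B_n^*$ consistently in both the determinant computation of part (a) and the congruence step of part (b); once the determinant formula $(a-2)g_n(a)$ is established, the Hilbert-symbol algebra is routine.
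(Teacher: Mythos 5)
Your parts (a) and (c) follow routes that work and are close to the paper's: for (a), expressing $|B_n^*(a)|$ as a combination of the pure-$a$ tridiagonal determinants ($g_{n+1},g_n,g_{n-1}$) and simplifying with the recurrence is exactly the paper's multilinearity argument; for (c), your direct base-case computation $C_p(B_2^*(a))=(1-a,a(2-a))_p=(1-a,2-a)_p=(-1,2-a)_p$ is valid (the paper instead re-runs the argument of (b) for $n=2$, a minor difference). The genuine gap is in part (b), precisely at the point you flag as ``the main subtlety'' but never resolve. Your first proposed fix --- converting the $(n-1)$th principal minor $Z_{n-1}$ of $B_n^*(a)$ into $B_{n-1}^*(a)$ ``up to a square factor'' by elementary congruence operations --- cannot work: rational congruence preserves the square class of the determinant, and $|Z_{n-1}|=g_n(a)-g_{n-1}(a)$ while $|B_{n-1}^*(a)|=(a-2)g_{n-1}(a)$, which lie in different square classes in general (e.g.\ $a=3$, $n-1=2$ gives determinants $5$ and $3$). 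Your fallback --- that $(|B_n^*|,-|B_{n-1}^*|)_p$ reduces to $(-g_n,g_{n-1})_p$ after discarding squares --- is false as a symbol identity: by \eqref{eqn: Hil split} and \eqref{eqn: Hil a -a},
$$\bigl((a-2)g_n,-(a-2)g_{n-1}\bigr)_p=(-g_n,g_{n-1})_p\,(2-a,\,g_ng_{n-1})_p,$$
and the extra factor is not trivial: for $a=3$, $n=3$, $p=3$ one has $(2-a,g_3g_2)_3=(-1,24)_3=-1$.

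The missing idea (the paper's) is a bordering trick. Pass to $Z'=\diag(B_n^*,-1)$, so that $C_p(B_n^*)=C_p(Z')(-|B_n^*|,-1)_p$ by \eqref{eqn: cp recursion} and \eqref{eqn: Hil a a}; then add the last row of $Z'$ to the second-last and third-last rows, and likewise for columns. This congruence transformation absorbs the appended $-1$ into the two bottom corner entries of $B_n^*$ and produces a matrix $Z''$ whose $n$th principal minor is exactly $Z^{\dag}=\diag(B_{n-1}^*,\,a-2)$: the troublesome interior diagonal entry $a$ becomes $a-1$, and the old corner entry $a-1$ becomes a decoupled $a-2$. Two further applications of \eqref{eqn: cp recursion} (to $Z''$ and then to $Z^{\dag}$), together with part (a), \eqref{eqn: Hil square}, \eqref{eqn: Hil split} and \eqref{eqn: Hil a -a}, give
$$C_p(B_n^*)=C_p(B_{n-1}^*)\,(g_{n-1}g_n,\,g_{n-1})_p=C_p(B_{n-1}^*)\,(-g_n,g_{n-1})_p;$$
the leftover diagonal entry $a-2$ contributes precisely the symbol that cancels the discrepancy factor $(2-a,g_ng_{n-1})_p$ exhibited above. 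Without this device (or an equivalent one) your recursion in (b) does not close, and since (c) is deduced by iterating (b), the gap propagates there as well.
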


\begin{proof}
\emph{Proof of (a).}  For each positive integer $i \geq 2$, let $T_i$ be the
$i \times i$ tridiagonal matrix such that every entry of the lead diagonal of
$T_i$ is an $a$ and every entry of the superdiagonal and subdiagonal is a
$1$. Using the well-known recursive expression for the determinant of a
tridiagonal matrix (see \cite{Mik03} for example), we see that
$|T_i|=g_{i+1}$ for each positive integer $i$.

When $n=\{2,3\}$, $|B_n^*|$ is easily directly computed. Note that when $n \geq
4$, $B_n^*$ can be obtained from $T_n$ by adding the $n$-dimensional
column vectors $(-1,0,\ldots,0)^T$ and $(0,\ldots,0,-1)^T$ to the first and last
columns, respectively. Thus, using the multilinearity of the determinant as a
function of columns and simplifying, it can be deduced that
\begin{align*}
|B_n^*| &= |T_n| - 2|T_{n-1}| + |T_{n-2}| \\
&= g_{n+1}-2g_{n} +g_{n-1} \\
&= (a-2)g_{n},
\end{align*}
where the last equality follows by substituting $g_{n+1} = ag_{n} - g_{n-1}$. So (a) holds.

\noindent\emph{Proof of (b).} Assume $n \geq 3$ and let $Z'=\diag(B^*_n,-1)$.  Then
\begin{align*}
C_p(B^*_n) &=  C_p(Z')(|Z'|,-|B^*_n|)_p && \hbox{by rearranging \eqref{eqn: cp recursion}} \\
		&= C_p(Z') (-|B^*_n|, -1)_p && \hbox{by \eqref{eqn: Hil a a} since $|Z'|=-|B^*_n|.$}
\end{align*}
Let $Z''$ be the matrix obtained from $Z'$ by adding the last row to the second-last row and third-last row and then adding the last column to the second-last column and third-last column. Note that $Z^{\dag}=\diag(B_{n-1}^*,a-2)$ is the $n$th principal minor of $Z''$. Using the equation above, we have
\begin{align*}
C_p(B_n^*) &= C_p(Z'') (-|B^*_n|, -1)_p && \hbox{since $Z'' \sim Z'$} \\
 &= C_p(Z^{\dag})(|Z''|,-|Z^{\dag}|)_p(-|B^*_n|, -1)_p && \hbox{by \eqref{eqn: cp recursion}} \\
 &= C_p(Z^{\dag})(-|B^*_n|,-(a-2)|B^*_{n-1}|)_p(-|B^*_n|, -1)_p && \hbox{since $|Z''|=|Z'|=-|B^*_n|$} \\
 &= C_p(Z^{\dag})(-|B^*_n|,(a-2)|B^*_{n-1}|)_p && \hbox{by \eqref{eqn: Hil split}} \\
 &= C_p(B_{n-1}^*)(|Z^{\dag}|,-|B_{n-1}^*|)_p(-|B^*_n|,(a-2)|B^*_{n-1}|)_p && \hbox{by (\ref{eqn: cp recursion})} \\
 &= C_p(B_{n-1}^*)((a-2)|B^*_{n-1}|,-|B^*_{n-1}|)_p(-|B^*_n|,(a-2)|B^*_{n-1}|)_p && \hbox{evaluating $|Z^{\dag}|$} \\
 &= C_p(B_{n-1}^*)(|B^*_{n-1}| |B^*_n|, (a-2) |B^*_{n-1}|)_p  && \hbox{by
\eqref{eqn: Hil split}} \\
 &= C_p(B_{n-1}^*) (g_{n-1} g_n, g_{n-1})_p  && \hbox{by part (a) and
\eqref{eqn: Hil square}} \\
 &= C_p(B_{n-1}^*)(-g_n,g_{n-1})_p && \hbox{by \eqref{eqn: Hil split} and
\eqref{eqn: Hil a -a}}.
\end{align*}

\noindent\emph{Proof of (c).} When $n=2$, following the argument used in the
proof of (b) establishes that $C_p(B_2^*)= (-1, 2-a)_p(-g_2,g_1)_p$. Then, when
$n \geq 3$, the statement follows by repeatedly applying (b).
\end{proof}

\begin{lemma}\label{lemma: Bn in terms of gn}
Let $n$ and $a$ be integers such that $a > 2$ and $n \geq 2$, and let $p$ be a prime or $p=\infty$. Then
$$C_p(B_n(a)) = (-(a+2)(a-2)^{n+1},-g_n(a))_p \medop\prod_{i=2}^n (-g_i(a), g_{i-1}(a))_p.$$
\end{lemma}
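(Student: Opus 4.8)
The plan is to chain together the three preceding results and then collapse the surviving Hilbert symbols. I would start from Lemma~\ref{lemma: Bn in terms of Bn*}, namely
\[
C_p(B_n(a)) = C_p(B_n^*(a))\,\bigl(-(a+2)(a-2)^{n+1},\,|B_n^*(a)|\bigr)_p,
\]
and then substitute $|B_n^*(a)| = (a-2)g_n(a)$ from Lemma~\ref{lemma: Bn* stuff}(a) together with the product formula $C_p(B_n^*(a)) = (-1,2-a)_p\prod_{i=2}^n(-g_i,g_{i-1})_p$ from Lemma~\ref{lemma: Bn* stuff}(c). This already reproduces the product $\prod_{i=2}^n(-g_i,g_{i-1})_p$ appearing in the target, so the whole problem reduces to showing that the leftover factor $(-1,2-a)_p\,\bigl(-(a+2)(a-2)^{n+1},(a-2)g_n\bigr)_p$ equals the claimed factor $\bigl(-(a+2)(a-2)^{n+1},-g_n\bigr)_p$.

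Writing $u=-(a+2)(a-2)^{n+1}$, I would use bilinearity (equation~\eqref{eqn: Hil split}) to peel off the common factor $(u,g_n)_p$ from each side; what remains is the identity $(-1,2-a)_p\,(u,a-2)_p=(u,-1)_p$. Using $2-a=-(a-2)$, bilinearity, and the fact that every Hilbert symbol squares to $1$, I would rearrange this into the single clean statement
\[
\bigl((a+2)(a-2)^{n+1},\,2-a\bigr)_p = 1.
\]

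To prove this I would factor the first argument and handle the two pieces separately. For $\bigl((a-2)^{n+1},2-a\bigr)_p$, splitting $2-a=(-1)(a-2)$ and applying \eqref{eqn: Hil a a} gives $(a-2,-1)_p^{n+1}(a-2,a-2)_p^{n+1}=(a-2,-1)_p^{2(n+1)}=1$. The interesting factor is $(a+2,2-a)_p$: here the key move is identity~\eqref{eqn: Hil a b}, which turns it into $\bigl((a+2)(a-2),(a+2)+(2-a)\bigr)_p=(a^2-4,4)_p$, and since $4$ is a perfect square this equals $1$ by~\eqref{eqn: Hil square}. Multiplying the two pieces finishes the reduction.

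The genuinely substantive step is the reduction in the second paragraph together with the evaluation of $(a+2,2-a)_p$: the raw substitution produces several Hilbert symbols, and the work lies in recombining them correctly via bilinearity and then recognising that the two arguments of the stubborn symbol sum to the perfect square $4$, which is exactly what identity~\eqref{eqn: Hil a b} is designed to exploit. Everything else is routine symbol manipulation.
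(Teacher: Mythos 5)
Your proposal is correct and follows essentially the same route as the paper: chain Lemma~\ref{lemma: Bn in terms of Bn*} with Lemma~\ref{lemma: Bn* stuff}(a) and (c), reduce the leftover symbols to the single identity $((a+2)(a-2)^{n+1},2-a)_p=1$, and dispose of the $(a+2,2-a)_p$ factor via \eqref{eqn: Hil a b} and the perfect square $4$. The only cosmetic difference is bookkeeping (you peel off $(u,g_n)_p$ where the paper splits $(-\Delta,(a-2)g_n)_p$ directly, and you use \eqref{eqn: Hil a a} where the paper uses \eqref{eqn: Hil a -a} for the $(a-2,2-a)_p$ piece), which does not change the argument.
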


\begin{proof} Let $\Delta=(a+2)(a-2)^{n+1}$. Combining the results of Lemmas \ref{lemma: Bn in terms of Bn*}(a), \ref{lemma: Bn in terms of Bn*}(c) and \ref{lemma: Bn* stuff} we have
\begin{align*}
C_p(B_n) &= (-1,2-a)_p(-\Delta,(a-2)g_n)_p \textstyle{\prod\limits_{i=2}^n} (-g_i, g_{i-1})_p \\
&= (-1,2-a)_p(-\Delta, 2-a)_p(-\Delta,-g_n)_p \textstyle{\prod\limits_{i=2}^n} (-g_i, g_{i-1})_p && \hbox{by \eqref{eqn: Hil split}}\\
&= (\Delta,2-a)_p(-\Delta,-g_n)_p \textstyle{\prod\limits_{i=2}^n} (-g_i, g_{i-1})_p && \hbox{by \eqref{eqn: Hil split}}.
\end{align*}
The result now follows by observing that \hbox{$(a^2-4,2-a)_p=(a+2,2-a)_p$} by \eqref{eqn: Hil split} and \eqref{eqn: Hil a -a} and furthermore \hbox{$ (a+2,2-a)_p = (-(a+2)(2-a), 4)_p = 1$} by \eqref{eqn: Hil a b} and \eqref{eqn: Hil square}.
\end{proof}

\ignore{
To compare the method of calculating $C_p(X)$ using Definition \ref{defn: Cp with determinants} to the method of calculating $C_p(X)$ using Lemmas \ref{lem: full general cp} and \ref{lemma: Bn in terms of gn}, consider the example of an $(11,4,1)$-covering having an excess with cycle type $[2,3,6]$.  Then the corresponding matrix $ X = X_{(11,4,1)}[2,3,6]$ is
$$ {\tiny X  =  \left(\begin{array}{rrrrrrrrrrr}
4 & 3 & 1 & 1 & 1 & 1 & 1 & 1 & 1 & 1 & 1 \\
3 & 4 & 1 & 1 & 1 & 1 & 1 & 1 & 1 & 1 & 1 \\
1 & 1 & 4 & 2 & 2 & 1 & 1 & 1 & 1 & 1 & 1 \\
1 & 1 & 2 & 4 & 2 & 1 & 1 & 1 & 1 & 1 & 1 \\
1 & 1 & 2 & 2 & 4 & 1 & 1 & 1 & 1 & 1 & 1 \\
1 & 1 & 1 & 1 & 1 & 4 & 2 & 1 & 1 & 1 & 2 \\
1 & 1 & 1 & 1 & 1 & 2 & 4 & 2 & 1 & 1 & 1 \\
1 & 1 & 1 & 1 & 1 & 1 & 2 & 4 & 2 & 1 & 1 \\
1 & 1 & 1 & 1 & 1 & 1 & 1 & 2 & 4 & 2 & 1 \\
1 & 1 & 1 & 1 & 1 & 1 & 1 & 1 & 2 & 4 & 2 \\
1 & 1 & 1 & 1 & 1 & 2 & 1 & 1 & 1 & 2 & 2
\end{array}\right)} $$

By calculating the determinants of the principal minors of $X$ and using Definition \ref{defn: Cp with determinants}, we obtain
\begin{align*}
C_p(X) 	= & (-1, -102400)_p (4, -7)_p(7, -26)_p(26, -76)_p(76, -200)_p \\
 		  & (200, -700)_p (700, -2000)_p(2000, -5700)_p(5700, -16000)_p \\
 		  & (16000, -44800)_p(44800, -102400)_p	
\end{align*}
On the other hand, by using Lemmas \ref{lem: full general cp} and \ref{lemma: Bn in terms of gn} and the fact that values of $g_n(3)$ for $n = 1,\dots, 6$ are $1,3,8,21,55,144$, respectively, we obtain
\begin{align*}
C_p(X) = &(-5,-3)_p (-3,1)_p   (-5,-8)_p (-3,1)_p (-8,3)_p \\
		& (-5,-144)_p(-3,1)_p (-8,3)_p (-21,8)_p (-55,21)_p (-144,55)_p
\end{align*}
In general, the second method for calculating $C_p(X)$ has the advantage that it does not require the calculation the determinants of the principal minors of $X$ and furthermore, components of the resulting Hilbert symbols tend to be much smaller.
}

Lemmas~\ref{lem: full general cp}~and~\ref{lemma: Bn in terms of gn} allow us
to compute $C_p(X_{(v,k,\lambda)}[c_1, \dots, c_t])$ for any set of
parameters.  To apply Lemma~\ref{lemma: Bn in terms of gn} we
need to recursively compute the value of $g_i$ for $1 \leq i \leq c_t$ which can be done in linear time in $c_t$. Then we immediately obtain $C_p(X_{(v,k,\lambda)}[c_1,
\dots, c_t])$ as a product of Hilbert symbols.

\textbf{Remark.} Let $v$, $k$ and $\lambda$ be positive integers such that $\lambda+2 < k < v$, and let $[c_1,\ldots,c_t]$ be a $v$-feasible cycle type. Applying Lemma \ref{lemma: main lemma} with $p=\infty$, or with $p$ chosen to be a prime that does not divide any of $\lambda(a^2-4),g_1(k-\lambda),g_2(k-\lambda),\ldots,g_{c_t}(k-\lambda)$, will never rule out the existence of a $(v,k,\lambda)$-covering whose excess has cycle type $[c_1,\ldots,c_t]$. When $p=\infty$, $f_\infty(k-\lambda,\lambda,t,e)=(-1)^{t-1}$ in Lemma \ref{lem: full general cp} and, by Lemma \ref{lemma: Bn in terms of gn}, $C_\infty(B_{n}(k-\lambda)) = -1$ for any $n \in \{2,\dots, v\}$.  When $p$ is a prime that does not divide any of $\lambda(a^2-4),g_1(k-\lambda),g_2(k-\lambda),\ldots,g_{c_t}(k-\lambda)$, $f_p(k-\lambda,\lambda,t,e)=1$ in Lemma \ref{lem: full general cp} and, by Lemma \ref{lemma: Bn in terms of gn}, $C_p(B_{n}(k-\lambda)) = 1$ for any $n \in \{2,\dots, v\}$. So in either case it can be seen from Lemma \ref{lem: full general cp} that Lemma
\ref{lemma: main lemma} tells us nothing. Since the choice $p= \infty$ is never of any use, we do not consider it in the remainder of the paper.

\section{Observations and general computational results} \label{sec: computational results}

We begin this section by noting that it can be seen from Lemma \ref{lem: full
general cp} that for all parameter sets $(v,k,\lambda)$ with $v \equiv 0
\mod{4}$, there will exist cycle types such that Lemma~\ref{lemma: main lemma}
cannot rule out the existence of a $(v,k,\lambda)$-covering whose excess has
that cycle type. To see this consider a $v$-feasible cycle type $[c_1,\ldots,c_t]$ such that $t \equiv 0 \mod{4}$, $|\{i:\hbox{$c_i$ is even}\}| \equiv 0 \mod{4}$ and
$c_{2i-1}=c_{2i}$ for all $i \in \{1,\ldots,\frac{t}{2}\}$ (for example, $[d^{v/d}]$ for any odd divisor $d$ of $v$). From the last of
these conditions it follows that $\prod_{i=1}^tC_p(B_{c_i}(k-\lambda))=1$, and
from the other two conditions we have
that, in Lemma~\ref{lem: full general cp}, $f_p(a,\lambda,t,e)=1$ for any odd
prime $p$ and $f_p(a,\lambda,t,e)=-1$ for $p=2$. As an example, one can take
$(v,k,\lambda)=(36,9,2)$ and cycle types $[3^{12}]$ or $[9^4]$.

In general, we do not expect that there are any parameter sets $(v,k,\lambda)$
for which Lemmas~\ref{lemma: main
lemma},~\ref{lem: full general cp} and~\ref{lemma: Bn in terms of gn} will completely rule out the existence
of a $(v,k,\lambda)$-covering with 2-regular excess. As we shall see
however, for many parameter sets $(v,k,\lambda)$, these results can be
used to establish that many cycle types are not realisable as the excess for a
$(v,k,\lambda)$-covering.  We begin with a small example of this before moving
on to a more general investigation.

\begin{example} \label{ex:11,4,1}
We consider symmetric $(11,4,1)$-coverings. Such a covering necessarily has a $2$-regular excess, and Theorem \ref{thm: BBHMS BRC} implies that this excess has an odd number of cycles. So the possible cycle types of the excess of such a covering are as follows.
$$[11] \quad [2,2,7] \quad [2,3,6] \quad [2,4,5] \quad [3,3,5] \quad [3,4,4] \quad [2,2,2,2,3]$$

Since the values of $g_n(3)$ for $n = 1,\dots,11$ are $1, 3, 8, 21, 55, 144, 377, 987, 2584, 6765, 17711$, respectively, it can be computed that the only choices of $p$ for which Lemma \ref{lemma: main lemma} may rule out $(11,4,1)$-coverings with particular excesses are $2, 3, 5, 7, 11, 13, 17, 19, 29, 41, 47, 89, 199$ (see the remark after Lemma \ref{lemma: Bn in terms of gn}).  For each possible cycle type $[c_1,\ldots,c_t]$ and each choice of $p$ we can apply Lemmas \ref{lem: full general cp} and \ref{lemma: Bn in terms of gn} to
determine $C_p(X_{(11,4,1)}[c_1,\ldots,c_t])$ and then determine whether Lemma
\ref{lemma: main lemma} rules out the existence of a $(11,4,1)$-covering whose
excess has cycle type $[c_1,\ldots,c_t]$.
Below we list for
each cycle type all values of $p$ for which this occurs.
\begin{center}
\begin{tabular}{r||c|c|c|c|c|c|c}
cycle type & $[11]$ & $[2, 2, 7]$ & $[2, 3, 6]$ &  $[2, 4, 5]$ &  $[3, 3, 5]$ &  $[3, 4, 4]$ & $[2, 2, 2, 2, 3]$ \\ \hline
values of $p$ & & $5,13$ & & $3,5$ & & $2, 5$ & $2, 5$ \\
\end{tabular}
\end{center}

Of the three possible cycle types that are not ruled out by Lemma \ref{lemma: 
main lemma}, it transpires that two are realisable and one is not.  A symmetric 
$(11,4,1)$-covering with  Hamilton cycle excess can be constructed from the 
block $[0,1,2,5]$ under the permutation $(0,1,\dots, 10)$, and the following 
list of blocks forms a symmetric $(11,4,1)$-covering with excess cycle type 
$[2,3,6]$.
\begin{center}
$\{0, 1, 5, 8\}, \{0, 1, 6, 9\}, \{0, 1, 7, 10\}, \{0, 2, 3, 4\},  \{1, 2, 3, 4\}, \{2, 5, 6, 7\},$\\
$ \{2, 8, 9, 10\}, \{3, 5, 6, 10\},\{3, 7, 8, 9\}, \{4, 5, 9, 10\}, \{4, 6, 7, 8\}$
\end{center}
We performed an exhaustive computer search to rule out the existence of a symmetric $(11,4,1)$-covering with excess cycle type $[3,3,5]$.
\end{example}

Obviously for other parameter sets we can apply a similar procedure to attempt to rule out the existence of coverings whose excesses have certain cycle types. Our results on symmetric coverings with $\lambda=1$ and $k \in \{4,5,6,7,8,9\}$ are given in Table \ref{table:consequences lambda 1}.  Here it is infeasible to determine the list of relevant values of $p$ as in Example \ref{ex:11,4,1} because finding the prime divisors of $g_v(a)$ becomes increasingly difficult (for example, when $(v,k,\lambda) = (29,6,1)$, $g_{29}(5) \approx 1.18 \times 10^{19}$); instead we test each prime $p < 10^3$.
\begin{table}[h!]
\begin{center}
\begin{tabular}{|l|l|l|l|l|}
\hline
$(v,k,\lambda)$ & number of & number ruled out  & number ruled out by  & number which  \\
         & cycle types & by Theorem \ref{thm: BBHMS BRC}  & Lemma \ref{lemma: main lemma} with $p < 10^3$ & may exist  \\
\hline\hline
$(11,4,1)$ & 14 & 7 & 4 & 3 \\
$(19,5,1)$ & 105 & 52 & 43 & 10\\
$(29,6,1)$ & 847 & 423 & 393 & 31\\
$(41,7,1)$ & 7245  &  3621 & 3376  & 248 \\
$(55,8,1)$ & 65121  &  32555 & 30746  & 1820 \\
$(71,9,1)$ & 609237  &  304604 & 292475  & 12158 \\
\hline
\end{tabular}
\end{center}
\vspace{-0.6cm}

\caption{Consequences of Theorem \ref{thm: BBHMS BRC} and Lemma \ref{lemma: main lemma} for symmetric coverings with $\lambda = 1$ and $k < 10$.}
\label{table:consequences lambda 1}
\end{table}

A \emph{cyclic} symmetric covering is one whose block set can be obtained by applying a cyclic permutation to a single block. A cyclic symmetric $(v,k,\lambda)$-covering with $2$-regular excess is equivalent to a ($v,k,\lambda, v-3)$-almost difference set (see \cite{Now14}).  Such coverings necessarily have excesses consisting of a number (possibly one) of cycles of uniform length.  Table \ref{table: no cyclic covering} lists parameter sets $(v,k,\lambda)$ with $v<200$ for which we can use Lemmas \ref{lem: full general cp}, \ref{lemma: Bn in terms of gn} and \ref{lemma: main lemma}, choosing $p < 10^3$, to computationally rule out the existence of a cyclic symmetric covering.

\begin{table}[h!]
\begin{center}
\footnotesize
\begin{tabular}{|rrrr|rrrr|rrrr|rrrr|}
\hline
$v$ & $k$ & $\lambda$ &  & $v$ & $k$ & $\lambda$  & & $v$ & $k$ & $\lambda$ & & $v$ & $k$ & $\lambda$ &  \\
\hline\hline
153 & 18 & 2 & & 111 & 32 & 9 & & 95 & 49 & 25 & & \textbf{199} & \textbf{98} &
\textbf{48} &  \\
37 & 11 & 3&  & 157 & 38 & 9 & & 53 & 38 & 27&  & 199 & 101 & 51 & \\
169 & 23 & 3 & & \textbf{63} & \textbf{30} & \textbf{14}&  & 81 & 47 & 27 & & 137 & 87 &
55& \\
\textbf{23} & \textbf{10} & \textbf{4} &  & 81 & 34 & 14&  & \textbf{123} & \textbf{60} & \textbf{29}&  & 111
& 79 & 56 & \\
53 & 15 & 4 & & 63 & 33 & 17 & & 123 & 63 & 32 & & 117 & 86 & 63& \\
\textbf{27} & \textbf{12} & \textbf{5} & & 37 & 26 & 18&  & \textbf{135} & \textbf{66} & \textbf{32} & & 157
& 119 & 90&  \\
23 & 13 & 7&  & 121 & 47 & 18&  & 135 & 69 & 35&  & 199 & 134 & 90& \\
161 & 34 & 7 & & 137 & 50 & 18 & & \textbf{171} & \textbf{84} & \textbf{41} & & 161 & 127
& 100& \\
27 & 15 & 8 & & 199 & 65 & 21 & & 171 & 87 & 44 & & 153 & 135 & 119& \\
117 & 31 & 8 & & \textbf{95} & \textbf{46} & \textbf{22}&  & 121 & 74 & 45&  & 169 & 146 &
126& \\
\hline
\end{tabular}
\end{center}
\vspace{-0.6cm}

\caption{Parameter sets $(v,k,\lambda)$ for which Lemma \ref{lemma: main lemma} rules out the existence of a cyclic symmetric covering.}
\label{table: no cyclic covering}
\end{table}

An open problem posed in \cite{Now14} is to find $(v, \frac{v-3}{2}, \frac{v-7}{4},v-3)$-almost difference sets in $\mathbb{Z}_v$ where $v \equiv 3 \mod{4}$ (these are of interest because they produce sequences with desirable autocorrelation properties).
Observe that the parameter sets in boldface in Table \ref{table: no cyclic covering} establish the nonexistence of some $(v,\frac{v-3}{2}, \frac{v-7}{4}, v-3)$-almost difference sets.
Furthermore, using primes $p < 10^3$, we can similarly rule out the existence of $(v,\frac{v-3}{2}, \frac{v-7}{4}, v-3)$-almost difference sets for the following values of $v$, where $v < 800$ (the first eight of which are contained in Table \ref{table: no cyclic covering}).

\begin{center}
23, 27, 63, 95, 123, 135, 171, 199, 207, 215, 231, 243, 255, 267,\\
 271, 307, 343, 351, 355, 363, 367, 371, 375, 399, 407, 411, 471,\\
495, 543, 555, 567, 651, 663, 671, 675, 699, 703, 711, 783
\end{center}

Even when Lemma \ref{lemma: main lemma} does not rule out the existence of a cyclic symmetric covering it may place restrictions on the possible cycle types of the excess.  For example, from Lemmas~\ref{lemma: main lemma},~\ref{lem: full general cp} and~\ref{lemma: Bn in terms of gn}, using primes $p < 10^3$, it follows that if there exists a cyclic symmetric $(51, 24, 11)$-covering, then its excess can only be a Hamilton cycle.  Similarly, if there exists a cyclic symmetric $(75, 36, 17)$-covering, then its excess can only consist of 3 cycles of length $25$.

Below we list the values of $v \equiv 3 \mod{4}$, where $v < 800$, for which we can show (using $p < 10^3$) that if a cyclic symmetric $(v, \frac{v-3}{2}, \frac{v-7}{4})$-covering does exist then it can only have a Hamilton cycle excess.  In the context of almost difference sets, this means the difference repeated $\lambda+1$ times is relatively prime to $v$. We exclude prime values of $v$ from the list since the result is trivial in those cases.
\begin{center}
15, 51, 87, 111, 143, 159, 299, 303, 319, 335, 339, 415, 447,\\
511, 519, 535, 559, 591, 611, 635, 655, 687, 731, 767, 771
\end{center}

In the remaining sections of this paper we address some cases in which
we can prove the nonexistence of coverings whose excesses have certain cycle
types. In Section \ref{sec: p divides a} we show that, for a parameter set
$(v,k,\lambda)$, choosing a value of $p$ dividing $k-\lambda$ allows us to give a
quite general restriction on what cycles types the excesses of symmetric
$(v,k,\lambda)$-coverings may have. In Sections~\ref{sec: Hamilton cycle excess}
and~\ref{sec: uniform length cycles}, we concentrate on the case of coverings
whose excess is a Hamilton cycle or a number of cycles of equal length. These
cases are of particular interest because, as we have seen, any $2$-regular excess of a cyclic
symmetric covering is necessarily of one of these forms.
Finally, in Section \ref{sec: 2 and 3 cycles}, we consider coverings whose
excess is composed of $2$-cycles and $3$-cycles. Results of Bose and Connor (see
\cite{BC52}) already cover the case in which the excess is composed entirely of
$2$-cycles or entirely of $3$-cycles.

\section{Choices of $p$ that divide $k-\lambda$}\label{sec: p divides a}

In this section we obtain a general result on the nonexistence of symmetric
$(v,k,\lambda)$-coverings with certain excesses by choosing values of $p$ which
divide $k-\lambda$. We take advantage of the fact that, under this choice of
$p$, the $p$-factorisations of most of the terms $g_i(k-\lambda)$ are well
behaved.

\begin{theorem}\label{lemma p divides a}
Let $v$, $k$ and $\lambda$ be positive integers such that $k > \lambda + 2$ and
$v = \frac{k(k-1)-2}{\lambda}+1$. For any prime $p \equiv 3 \mod{4}$ such that
$p$ does not divide $\lambda$, and $p$ has odd multiplicity in the prime
factorisation of $k-\lambda$, there does not exist a symmetric
$(v,k,\lambda)$-covering with $2$-regular excess that contains an odd number
of cycles with lengths divisible by $4$ and no cycle of length divisible by
$2p$.
\end{theorem}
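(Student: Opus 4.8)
The plan is to apply Lemma~\ref{lemma: main lemma}: for any $v$-feasible cycle type $[c_1,\ldots,c_t]$ having an odd number of cycles of length divisible by $4$ and no cycle of length divisible by $2p$, I would show $C_p(X_{(v,k,\lambda)}[c_1,\ldots,c_t]) = -1$, which since $p$ is odd rules out the corresponding covering. Write $a = k-\lambda$ and let $\bar{a}p^{\alpha}$ be the $p$-factorisation of $a$, so $\alpha$ is odd by hypothesis. The first step is a $p$-adic analysis of the sequence $g_n = g_n(a)$. Since $p \mid a$, the recurrence gives $g_n \equiv -g_{n-2} \mod{p}$, so every odd-indexed term is a unit with $g_n \equiv (-1)^{(n-1)/2} \mod{p}$, while every even-indexed term is divisible by $p$. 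A short induction using $g_{2m} = ag_{2m-1}-g_{2m-2}$ and $g_{2m-1}\equiv (-1)^{m-1}\mod{p}$ then establishes $g_{2m} \equiv (-1)^{m-1} m\,\bar{a}\, p^{\alpha} \mod{p^{\alpha+1}}$. Hence $g_{2m}$ has $p$-multiplicity exactly $\alpha$ unless $p \mid m$, that is, unless $2p \mid 2m$; this is where the ``no cycle of length divisible by $2p$'' hypothesis first becomes relevant.

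The core computation is evaluating $C_p(B_n(a))$ via Lemma~\ref{lemma: Bn in terms of gn}. I would first dispose of the prefactor $(-(a+2)(a-2)^{n+1}, -g_n)_p$. Since $p \mid a$ we have $a+2 \equiv 2$ and $a-2 \equiv -2 \mod{p}$, so $-(a+2)(a-2)^{n+1}$ is a unit congruent to $(-1)^n 2^{n+2} \mod{p}$; for $n$ odd both arguments are units and the symbol is $1$, while for $n$ even the unit argument is a perfect square modulo $p$, so the symbol is $1$ whatever the valuation of $g_n$. Thus $C_p(B_n(a)) = \prod_{i=2}^n (-g_i, g_{i-1})_p$. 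To evaluate this I would group the factors into consecutive pairs $\bigl((-g_{2m}, g_{2m-1})_p,\ (-g_{2m+1}, g_{2m})_p\bigr)$ for $m \geq 1$. Letting $v = v_p(g_{2m})$, formula~\eqref{eqn: Hil Legendre p} shows each pair combines to $\left(\mfrac{-g_{2m-1}g_{2m+1}}{p}\right)^{v}$, and since $g_{2m-1}g_{2m+1} \equiv -1 \mod{p}$ this equals $\left(\mfrac{1}{p}\right)^{v} = 1$. Crucially this holds uniformly whether $2m$ is a good index or a bad one ($2p \mid 2m$), because the valuation $v$ enters only as an exponent on a symbol that is already trivial.

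Consequently the product telescopes. If $n$ is odd every factor pairs off and $C_p(B_n(a)) = 1$. If $n = 2M$ is even there is a single leftover factor $(-g_{2M}, g_{2M-1})_p$; the hypothesis $2p \nmid n$ guarantees this leftover index is good, so $v = \alpha$ and, using $\left(\mfrac{-1}{p}\right) = -1$ (here $p \equiv 3 \mod{4}$ is used) together with $\alpha$ odd, it evaluates to $(-1)^{M-1}$. Hence for $n$ not divisible by $2p$, $C_p(B_n(a)) = -1$ exactly when $4 \mid n$ and $+1$ otherwise. I would then substitute into Lemma~\ref{lem: full general cp}: because $p$ is odd, $p \mid a$, and $p \nmid \lambda$, every Hilbert symbol in $f_p(a,\lambda,t,e)$ has both arguments coprime to $p$ and so equals $1$, giving $f_p(a,\lambda,t,e)=1$. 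Therefore $C_p(X_{(v,k,\lambda)}[c_1,\ldots,c_t]) = \prod_{i=1}^t C_p(B_{c_i}(a)) = (-1)^{|\{i\,:\,4 \mid c_i\}|} = -1$, the last equality using that the number of cycles of length divisible by $4$ is odd, and Lemma~\ref{lemma: main lemma} finishes the argument.

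I expect the main obstacle to be the careful bookkeeping of $p$-adic valuations inside the telescoping product. The delicate point is that indices divisible by $2p$ produce $g_{2m}$ with valuation larger than $\alpha$ and of uncertain parity, so one cannot evaluate the individual factors at those indices; the resolution is to pair each such factor with its neighbour so that the valuation appears only as an exponent on a trivial symbol. Making this pairing work uniformly across good and bad indices is precisely what lets the hypothesis be the mild ``no cycle length divisible by $2p$'' rather than the far more restrictive ``all cycle lengths below $2p$'', and verifying that only the final unpaired factor (forced to be a good index) governs the sign is the crux of the proof.
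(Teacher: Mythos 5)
Your proposal is correct and follows essentially the same route as the paper's proof: the same induction giving the congruences for $g_{2i}$ and $g_{2i+1}$ modulo powers of $p$ (the paper works modulo $p^{2\alpha}$, you modulo $p^{\alpha+1}$, both sufficient to pin the valuation of $g_{2m}$ at exactly $\alpha$ when $p \nmid m$), the same observation that the prefactor in Lemma~\ref{lemma: Bn in terms of gn} is trivial, the same pairing of consecutive Hilbert symbols so that the product telescopes regardless of the valuations at indices divisible by $2p$, and the same final assembly via Lemmas~\ref{lem: full general cp} and~\ref{lemma: main lemma}. The only omission is the one-line caveat that one may assume $|X_{(v,k,\lambda)}[c_1,\ldots,c_t]|$ is a perfect square (otherwise Proposition~\ref{prop: matrix form} already rules out the covering), which is a hypothesis of Lemma~\ref{lem: full general cp} and must be stated before that lemma can be invoked.
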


\begin{proof}
Let $p \equiv 3 \mod{4}$ be a prime such that $p$ does not divide $\lambda$ and $p$ has odd multiplicity in the prime factorisation of $k-\lambda$. Let $[c_1,\ldots,
c_t]$ be a $v$-feasible cycle type such that $c_1,\ldots,c_t$
contains an odd number of entries divisible by $4$ and contains no
entry divisible by $2p$. Let $X=X_{(v,k,\lambda)}[c_1,\ldots,c_t]$. We may
assume that $|X|$ is a perfect square for otherwise we are finished by
Proposition \ref{prop: matrix form}. By Lemma~\ref{lemma: main lemma}, it
suffices to show that $C_p(X)=-1$.

Let $a=k-\lambda$ and let $\bar{a}p^{\alpha}$ be the $p$-factorisation of $a$. We abbreviate $g_i(a)$ and $B_n(a)$ to $g_i$ and $B_n$ in this proof.
From our hypotheses, $p$ does not divide $\lambda$. Further, since $p$ divides
$a$ it is clear that $p$ divides neither $a+2$ nor $a-2$. Thus, it is easy to
see that, in Lemma~\ref{lem: full general cp}, $f_p(a,\lambda,t,e) = 1$ for all values of $t$ and $e$. So by
Lemma~\ref{lem: full general cp} it suffices to show that, for any integer $n
\geq 2$ which is not divisible by $2p$, $C_p(B_n)=-1$ if and only if $n \equiv
0 \mod{4}$.

Let $n \geq 2$ be an integer that is not divisible by $2p$. From the definition of $g_i$, it is routine to show by induction that, for each non-negative integer $i$,
\begin{equation}\label{eqn gi congruences when p divides a}
g_{2i} \equiv (-1)^{i+1} i \bar{a}p^{\alpha}  \mod{p^{2\alpha}} \quad\mbox{ and
}\quad
g_{2i+1} \equiv (-1)^i \mod{p^{2\alpha}}.
\end{equation}
Observe that $(-(a+2)(a-2)^{n+1},-g_n)_p=1$ for any $n$ because $p$ does not divide $a+2$, $a-2$, or  $g_n$ when $n$ is odd, and
$-(a+2)(a-2) \equiv 2^2 \mod{p}$ when $n$ is even.  From Lemma~\ref{lemma: Bn in
terms of gn},
\begin{equation}\label{eqn Cp product when p divides a}
C_p(B_n) = \medop\prod_{i=2}^n (-g_i, g_{i-1})_p.
\end{equation}
Note that for any positive integer $i$ we have
\begin{align*}
(-g_{2i}, g_{2i-1})_p (-g_{2i+1}, g_{2i})_p &= (-g_{2i}, g_{2i-1})_p (g_{2i-1}, g_{2i})_p && \hbox{since $g_{2i-1} \equiv -g_{2i+1} \mod{p}$ by \eqref{eqn gi congruences when p divides a}} \\
&= (-g_{2i}^2, g_{2i-1})_p && \hbox{by \eqref{eqn: Hil split}} \\
&= (-1, g_{2i-1})_p && \hbox{by \eqref{eqn: Hil square}}\\
& = 1 && \hbox{since $g_{2i-1} \equiv \pm1 \mod{p}$ by \eqref{eqn gi congruences when p divides a}.}
\end{align*}
It follows from \eqref{eqn Cp product when p divides a} that
$$C_p(B_n) =
\left\{
  \begin{array}{ll}
    1, & \hbox{if $n$ is odd;} \\
    (-g_n, g_{n-1})_p, & \hbox{if $n$ is even.}
  \end{array}
\right.$$
For even $n$, it follows from \eqref{eqn gi congruences when p divides a} that $g_{n-1} \equiv (-1)^{n/2-1} \mod{p}$ and, since $n$ is not divisible by $2p$, that $g_{n} = \bar{g_{n}}p^\alpha $ for some integer $\bar{g_{n}}$ not divisible by $p$. Thus, using \eqref{eqn: Hil Legendre p}, for even $n$,
$$(-g_n, g_{n-1})_p = \left(\mfrac{(-1)^{(n/2-1)}}{p}\right)^{\alpha}.$$
It now follows that $C_p(B_n)=-1$ if and only if $n \equiv 0 \mod{4}$ from basic properties of Legendre symbols (note that, from our hypotheses, $p \equiv 3 \mod{4}$ and $\alpha$ is odd).
\end{proof}

It is easy to find infinite families of symmetric coverings with
specified excesses whose existence is ruled out by Theorem \ref{lemma p divides a}. The following corollary, easily proved by setting $p=3$ in Theorem \ref{lemma p divides a}, gives one example.

\begin{corollary} \label{c: infinite family odd number of 4 cycles}
If $k \equiv 7,31,34,58 \mod{72}$ and $v = k(k-1)-1$, then there does not exist a symmetric $(v, k, 1)$-covering with excess having cycle type $[2,3,4^{(v-5)/4}]$.
\end{corollary}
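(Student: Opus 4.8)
The plan is to derive Corollary~\ref{c: infinite family odd number of 4 cycles} as an essentially immediate consequence of Theorem~\ref{lemma p divides a} by taking $p=3$ and verifying that the hypotheses of that theorem hold for the stated congruence classes of $k$. So the first thing I would do is record the three conditions Theorem~\ref{lemma p divides a} imposes when $p=3$: namely that $3 \equiv 3 \mod{4}$ (which is automatic), that $3 \nmid \lambda$ (here $\lambda=1$, so automatic), and that $3$ has odd multiplicity in the prime factorisation of $k-\lambda = k-1$. I would also need to confirm that $v=k(k-1)-1$ is the correct value of $v$; this follows from $v=\frac{k(k-1)-2}{\lambda}+1$ with $\lambda=1$, so no separate argument is required.

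The substantive verification is the arithmetic one about the excess cycle type $[2,3,4^{(v-5)/4}]$. I would check two things. First, that the cycle type is genuinely realisable as a list, i.e.\ that $(v-5)/4$ is a non-negative integer; since $v=k(k-1)-1$, one computes $v-5 = k(k-1)-6 = (k-3)(k+2)$, and I would confirm this is divisible by $4$ for each residue class $k \equiv 7,31,34,58 \mod{72}$ (equivalently, that the resulting cycle lengths sum to $v$: $2+3+4\cdot\frac{v-5}{4}=v$). Second, and more importantly, I would verify that this cycle type satisfies the combinatorial hypothesis of Theorem~\ref{lemma p divides a} with $p=3$: that it contains an odd number of cycles of length divisible by $4$ and no cycle of length divisible by $2p=6$. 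The cycle lengths here are $2$, $3$, and $\frac{v-5}{4}$ copies of $4$; none of $2$, $3$, or $4$ is divisible by $6$, so the ``no cycle of length divisible by $6$'' condition holds trivially, and the number of length-$4$ cycles is $\frac{v-5}{4}$, so I must check this count is \emph{odd} in each case.

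The crux of the whole argument is therefore a single modular computation modulo $72$: for each $k \equiv 7,31,34,58 \mod{72}$ I need to simultaneously confirm that $3$ divides $k-1$ to an odd power and that $\frac{(k-3)(k+2)}{4}$ is odd. I would handle the $3$-adic valuation by writing out $k-1 \mod 72$ for the four residues (obtaining $k-1 \equiv 6,30,33,57 \mod{72}$) and checking that in each case $v_3(k-1)=1$; since $72 = 8\cdot 9$, the residue of $k-1$ modulo $9$ pins down whether $v_3(k-1)$ is $0$, $1$, or $\geq 2$, and all four values give $v_3=1$ (odd), which is exactly what is needed. I would handle the parity of $\frac{(k-3)(k+2)}{4}$ by reducing modulo $8$: the residue of $k \mod 8$ (which the modulus $72$ determines, as $8 \mid 72$) fixes the $2$-adic valuation of $(k-3)(k+2)$ and hence the parity of the quotient. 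I expect the main (and only real) obstacle to be bookkeeping: ensuring that the modulus $72=\mathrm{lcm}(8,9)$ is exactly what is required to control both the power of $3$ and the power of $2$ simultaneously, and that the four listed residues are precisely those in which both conditions succeed. Once these checks are in place, Theorem~\ref{lemma p divides a} applied with $p=3$ immediately yields the nonexistence claim, completing the proof.
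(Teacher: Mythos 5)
Your proposal is correct and is exactly the paper's intended argument: the paper states that the corollary is ``easily proved by setting $p=3$ in Theorem~\ref{lemma p divides a}'', and your verification (that $3$ divides $k-1$ to the first power for each residue class, that no cycle length $2,3,4$ is divisible by $6$, and that the number $(v-5)/4 = (k-3)(k+2)/4$ of $4$-cycles is an odd integer, controlled via $k \bmod 9$ and $k \bmod 8$ respectively) supplies precisely the routine checks the paper leaves implicit.
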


In Table \ref{table: lemma p divides a} we list the parameters $(v,k,\lambda)$
where $1 \le \lambda \le 2$ and $\lambda + 2 < k < 30$ for which there exists
a prime $p \equiv 3 \mod{4}$ that has odd multiplicity in the prime
factorisation of $k-\lambda$.  For each parameter set, we uniformly at random
sample 1000 distinct integer partitions of $v$ which are $v$-feasible
cycle types, or consider all such partitions if $v$
is small. Of the cycle types not forbidden as excesses by Theorem \ref{thm: BBHMS BRC}, we list the proportion which are ruled out using Theorem \ref{lemma p divides a}.

\begin{table}[h!]
\begin{center}
\footnotesize
\begin{tabular}{|lll|l|l||lll|l|l|}
\hline
 $v$ & $k$ & $\lambda$ & $p$ & proportion ruled out & $v$ & $k$ & $\lambda$ & $p$ & proportion ruled out \\\hline\hline
11 & 4 & 1 & 3 & 0.143  & 10 & 5 & 2 & 3 & 0.167\\
41 & 7 & 1 & 3 & 0.206   & 28 & 8 & 2 & 3 & 0.312\\
55 & 8 & 1 & 7 & 0.422   & 36 & 9 & 2 & 7 & 0.392\\
131 & 12 & 1 & 11 & 0.412 & 78 & 13 & 2 & 11 & 0.442\\
155 & 13 & 1 & 3 & 0.0697 & 91 & 14 & 2 & 3 & 0.143\\
209 & 15 & 1 & 7 & 0.264  & 120 & 16 & 2 & 7 & 0.351\\
239 & 16 & 1 & 3 & 0.0336  &136 & 17 & 2 & 3 & 0\\
379 & 20 & 1 & 19 & 0.458  & 210 & 21 & 2 & 19 & 0\\
461 & 22 & 1 & 3 & 0.021   & 253 & 23 & 2 & 3 & 0.0356\\
461 & 22 & 1 & 7 & 0.171   & 253 & 23 & 2 & 7 & 0.285\\
505 & 23 & 1 & 11 & 0.296  & 276 & 24 & 2 & 11 & 0\\
551 & 24 & 1 & 23 & 0.444  & 300 & 25 & 2 & 23 & 0.485\\
599 & 25 & 1 & 3 & 0.01    & 325 & 26 & 2 & 3 & 0.0179\\
755 & 28 & 1 & 3 & 0.00596  & 406 & 29 & 2 & 3 & 0.0207\\
811 & 29 & 1 & 7 & 0.0936 & & & & & \\
\hline
\end{tabular}
\end{center}
\vspace{-0.6cm}

\caption{Proportion of cycle types ruled out by Lemma~\ref{lemma p
divides a} out of those which were not already ruled out by Theorem~\ref{thm:
BBHMS BRC} from a uniform random sample of $v$-feasible cycle types.}
\label{table: lemma p divides a}
\end{table}

\section{Hamilton cycle excesses} \label{sec: Hamilton cycle excess}

In this section we investigate the existence of symmetric
$(v,k,\lambda)$-coverings whose excess is a Hamilton cycle.  We start with
some computational results. We compute $C_p(X_{(v,k,\lambda)}[v])$ for  $1 \leq
\lambda \leq 5$ and $\lambda + 2 < k < 30$ and $p<10^4$. In our search space, there are 18 possible parameter
sets $(v,k,\lambda)$ for a symmetric covering on even number of points $v$; of
these, 12 cases are ruled out by Theorem~\ref{thm: BBHMS BRC} and only 5 are
ruled out by Lemma~\ref{lemma: main lemma}. On the other hand, there are 61
possible parameter sets $(v,k,\lambda)$ where $v$ is odd;  of these, none are
ruled out by Theorem~\ref{thm: BBHMS BRC} and 26 are ruled out by
Lemma~\ref{lemma: main lemma}. Consequently, we focus our attention on the case where $v$ is odd.

Table~\ref{table: Ham cycles excess}
is a summary of parameters for symmetric coverings which cannot have a
Hamilton cycle excess by Lemma~\ref{lemma: main lemma} and which
are not ruled out by Theorem~\ref{thm: BBHMS BRC}. Although there does not appear to be an obvious pattern in the list of primes $p$ which
rule out the existence of coverings with Hamilton cycle excesses, we observe that values of $p$ that are odd and divide $k$ are often effective when $\lambda = 2$; they are marked in boldface. Next, we generalise this pattern to investigate which cases can be ruled out with a prime $p$ that divides
$k-\lambda+2$.

\begin{table}[h!]
\begin{center}
\footnotesize
\begin{tabular}{|lll|l||lll|l|}
\hline
 $v$ & $k$ & $\lambda$ & $p$ & $v$ & $k$ & $\lambda$ & $p$\\\hline\hline
55 &  8 & 1 & 43, 307 &                          37 & 11 & 3 & 73 \\
109 &  11 & 1 &  1307 &                          169 & 23 & 3 & 337, 2027 \\
305 &  18 & 1 &  6709 &                          271 & 29 & 3 & 3793 \\
341 &  19 & 1 &  557, 2417 &                     & & & \\
& & & &                                          23 & 10 & 4 & 229 \\
21 & 7 & 2 & \textbf{7}, 13 &                             53 & 15 & 4 & 317 \\
28 & 8 & 2 & 2, 3 &                              116 & 22 & 4 & 173, 347 \\
45 & 10 & 2 & 29, 149 &                          127 & 23 & 4 & 1777 \\
55 & 11 & 2 & \textbf{11}, 109, 197 &                     & & & \\
78 & 13 & 2 & 2, 5 &                             27 & 12 & 5 & 2, 3, 107 \\
91 & 14 & 2 & \textbf{7}, 223 &                           93 & 22 & 5 & 991 \\
105 & 15 & 2 & 59, 419, 509 &                    111 & 24 & 5 & 2, 3 \\
153 & 18 & 2 & 5, 71, 101, 2447, 5303 &          141 & 27 & 5 & 281\\
171 & 19 & 2 & \textbf{19}, 113, 227, 1367, 4217, 5813 &  163 & 29 & 5 & 2281\\
190 & 20 & 2 & 37, 113, 797 &&&& \\
231 & 22 & 2 & \textbf{11}, 41 &&&& \\
253 & 23 & 2 & \textbf{23}, 43 &&&& \\
325 & 26 & 2 & 19, 29, 4549 &&&& \\
351 & 27 & 2 & 2, \textbf{3}, 71, 233, 1637 &&&& \\
406 & 29 & 2 & 41, 461 &&&& \\ \hline
\end{tabular}
\end{center}
\vspace{-0.6cm}

\caption{Parameter sets $(v,k,\lambda)$ for which Lemma \ref{lemma: main lemma} rules out the existence of a symmertic covering with Hamilton cycle excess.}
\label{table: Ham cycles excess}
\end{table}

The remainder of this section is organised as follows. For choices of $p$ that 
divide $a+2$, Lemmas \ref{lemma g lambda 2} and \ref{lemma g star deltas} give 
results about the behaviour of $g_i(a)$ modulo $p$ and Lemma \ref{lemma lambda 2 
Ham cycle} finds an expression for $C_p(B_n(a))$. Lemmas \ref{lemma g star 
deltas} and \ref{lemma lambda 2 Ham cycle} will also be used in Section 
\ref{sec: uniform length cycles}. We then use these results, along with the 
technical Lemma \ref{lemma v mod p}, to prove Theorem \ref{theorem: no hamilton 
cycle v odd} which establishes the nonexistence of a symmetric 
$(v,k,\lambda)$-covering with Hamilton cycle excess for an infinite number of 
parameter sets.

\begin{lemma} \label{lemma g lambda 2}
If $p$ is a prime and $a$, $s$ and $\alpha$ are positive integers such that $a+2 = p^{\alpha} s$,
then $g_n(a) \equiv (-1)^{n+1} n \mod{p^{\alpha}}$ for each positive integer $n$.
\end{lemma}
\begin{proof} Obviously $a \equiv -2 \mod{p^{\alpha}}$. Using this and the recursive definition of $g_i$, the result follows easily by induction.
\end{proof}

\begin{lemma} \label{lemma g star deltas}
Let $p$ be an odd prime and $a$ and $n$ be positive integers such that $a > 2$, $a+2 \equiv 0 \mod{p}$, and $n \equiv 0 \mod{p}$. Let $sp^{\alpha}$ and $\bar{n}p^\delta$ be the $p$-factorisations of $a+2$ and $n$ respectively. If $(p,\alpha) \neq (3,1)$, then $g_n(a) = \bar{g}p^{\delta}$ for some integer $\bar{g} \equiv (-1)^{n+1} \bar{n} \mod {p}$.
\end{lemma}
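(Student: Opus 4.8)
The obstacle to overcome is that Lemma~\ref{lemma g lambda 2} only determines $g_n(a)$ modulo $p^\alpha$, and under the hypothesis $n \equiv 0 \mod{p}$ this is not enough: once $p^\alpha \mid n$ (that is, once $\delta \geq \alpha$) the congruence $g_n(a) \equiv (-1)^{n+1}n \mod{p^\alpha}$ degenerates to $g_n(a) \equiv 0 \mod{p^\alpha}$ and tells us nothing about the exact power of $p$ dividing $g_n(a)$. My plan is therefore to discard the mod-$p^\alpha$ viewpoint entirely and instead obtain an \emph{exact} closed form for $g_n(a)$ as a polynomial in $a+2$, from which the $p$-adic valuation can be read off term by term.

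First I would derive the closed form from the generating function $\sum_{n \geq 1} g_n(a) z^{n-1} = \frac{1}{1-az+z^2}$. The key observation is that $1 - az + z^2 = (1+z)^2 - (a+2)z$, so expanding as a geometric series in $\frac{(a+2)z}{(1+z)^2}$ and extracting the coefficient of $z^{n-1}$ (using $\frac{1}{(1+z)^{2j+2}} = \sum_{i \geq 0}(-1)^i\binom{2j+1+i}{i}z^i$) yields
\[ g_n(a) = (-1)^{n+1}\sum_{j=0}^{n-1}(-1)^j\binom{n+j}{2j+1}(a+2)^j. \]
Writing $v_p(\cdot)$ for the exponent of $p$ in the $p$-factorisation of a nonzero integer, the $j=0$ term of this sum contributes exactly $(-1)^{n+1}n = (-1)^{n+1}\bar n p^\delta$, which has $p$-adic valuation $\delta$ and unit part congruent to $(-1)^{n+1}\bar n \mod{p}$. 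Consequently the whole lemma reduces to the single claim that every term with $j \geq 1$ has $p$-adic valuation \emph{strictly} greater than $\delta$: given this, $g_n(a) = (-1)^{n+1}\bar n p^\delta + p^{\delta+1}(\cdots) = \bar g p^\delta$ with $\bar g \equiv (-1)^{n+1}\bar n \mod{p}$, exactly as required (and $\bar g \not\equiv 0 \mod p$ since $\gcd(\bar n, p)=1$).

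For the tail estimate, the $j$th term of $g_n(a)$ has valuation $\alpha j + v_p\binom{n+j}{2j+1}$, since $a+2 = sp^\alpha$. Factoring $\binom{n+j}{2j+1} = \frac{1}{(2j+1)!}\prod_{i=-j}^{j}(n+i)$ and isolating the $i=0$ factor $n$ (of valuation $\delta$) gives $v_p\binom{n+j}{2j+1} \geq \delta - v_p((2j+1)!)$, the other $2j$ factors contributing nonnegatively. Hence it suffices to prove $\alpha j > v_p((2j+1)!)$ for every $j \geq 1$. Using the standard bound $v_p((2j+1)!) \leq \frac{2j+1}{p-1}$, this is immediate when $\alpha \geq 2$ (there $\alpha j \geq 2j > \frac{2j+1}{2} \geq v_p((2j+1)!)$ for $j \geq 1$), and it also holds when $\alpha = 1$ and $p \geq 5$ (there $v_p((2j+1)!) \leq \frac{2j+1}{4} < j = \alpha j$ for $j \geq 1$).

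The delicate point, and the reason for the hypothesis $(p,\alpha) \neq (3,1)$, is precisely this last inequality. When $\alpha = 1$ and $p = 3$ the bound reads $\frac{2j+1}{p-1} = j + \tfrac12$, so $\alpha j > v_p((2j+1)!)$ can fail: already the $j=1$ term may have the same valuation as the $j=0$ term, and indeed $g_3(a) = a^2 - 1 = 3(3s-1)(s-1)$ when $a+2 = 3s$, whose $3$-adic valuation exceeds $\delta = 1$ whenever $s \equiv 1 \mod{3}$. Thus $(p,\alpha)=(3,1)$ is the unique case in which the argument genuinely breaks down, and it is excluded by hypothesis; in all remaining cases the termwise valuation bound holds for every $j \geq 1$, completing the proof. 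I expect this final inequality to be the only subtle step, the rest being bookkeeping on the closed form.
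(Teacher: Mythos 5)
Your proof is correct, but it takes a genuinely different route from the paper's. The paper starts from the Binet-type formula $g_n = (\zeta_1^n-\zeta_2^n)/(\zeta_1-\zeta_2)$ and derives the expansion $2^{n-1}g_n(a) = \sum_{i\geq 0}\binom{n}{2i+1}a^{n-2i-1}(a^2-4)^i$, i.e.\ an expansion in powers of $a^2-4$; it then shows that every term with $i\geq 1$ is divisible by $p^{\delta+1}$ by writing $\binom{n}{2i+1}=\frac{n}{2i+1}\binom{n-1}{2i}$ and splitting into cases according to whether the exponent of $p$ in $2i+1$ exceeds $\delta$, the hypothesis $(p,\alpha)\neq(3,1)$ entering through the inequality $\alpha i > \xi$, where $\xi$ is that exponent. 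Your expansion $g_n(a)=(-1)^{n+1}\sum_j(-1)^j\binom{n+j}{2j+1}(a+2)^j$ (which is correct, and could alternatively be verified by induction on $n$ if one wished to avoid generating functions) is in powers of $a+2$ instead, and your tail bound replaces the paper's case analysis by the single uniform estimate $\alpha j > v_p((2j+1)!)$ from Legendre's formula, where $v_p$ denotes the exponent of $p$, after extracting the factor $n$ from the product $\prod_{i=-j}^{j}(n+i)$. The skeleton is the same in both arguments --- isolate the leading term, of valuation exactly $\delta$, and push all remaining terms into $p^{\delta+1}\mathbb{Z}$ --- but your version has some concrete advantages: every term of your sum is an integer (the paper must carry the unit $\frac{1}{2^{n-1}}$ through to the final congruence), the hypothesis $p \mid a+2$ feeds directly into the expansion, the case split on $\xi$ versus $\delta$ disappears, and the role of the exclusion $(p,\alpha)\neq(3,1)$ is isolated in one clean inequality, together with the explicit counterexample $g_3(a)=3(s-1)(3s-1)$ with $s\equiv 1 \mod{3}$ showing the exclusion is necessary rather than an artifact of the method. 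What the paper's route buys in exchange is that it relies only on the standard Binet expansion and elementary manipulation of binomial coefficients, with no generating-function identities.
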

\begin{proof}
We show that $\frac{g_n}{p^\delta}$ is an integer congruent to $(-1)^{n+1}
\bar{n}$ modulo $p$ which will suffice to prove the result.  The value of $g_n$
is defined by a second order recurrence relation. Solving this, we see that $g_n
= \frac{\zeta_1^{n} - \zeta_2^{n}}{\zeta_1 - \zeta_2}$
where $\zeta_1 = \frac{1}{2}(a + \sqrt{a^2-4})$, $\zeta_2 = \frac{1}{2}(a -
\sqrt{a^2-4})$. Let $b = \sqrt{a^2-4}$. Now,
\begin{align*}
g_n &= \mfrac{1}{2^n b} \left( (a+b)^n - (a-b)^n \right) \\
	&= \mfrac{1}{2^n b} \left( \medop\sum\limits_{i=0}^n \mbinom{n}{i}a^{n-i}b^i -
\medop\sum\limits_{i=0}^n \mbinom{n}{i}a^{n-i} (-1)^i b^i \right) \\
	&= \mfrac{1}{2^n b} \left( 2 \medop\sum\limits_{i=0}^{\lfloor (n-1)/2
\rfloor} \mbinom{n}{2i+1} a^{n-2i-1} b^{2i+1} \right) \\
	&= \mfrac{1}{2^{n-1}} \medop\sum\limits_{i=0}^{\lfloor (n-1)/2
\rfloor} \mbinom{n}{2i+1} a^{n-2i-1} (a^2-4)^{i} \\
    &= \mfrac{1}{2^{n-1}} \medop\sum\limits_{i=0}^{\lfloor (n-1)/2
\rfloor} T_i   \label{eqn gn terms
in binomial coeff}
\end{align*}
where, for each $i \in \{0,\ldots,\lfloor\frac{n-1}{2}\rfloor\}$,
$$T_i = \mbinom{n}{2i+1} (sp^\alpha - 2)^{n-2i-1} sp^{\alpha i}
(sp^\alpha -4)^i. $$
Since $n = \bar{n}p^\delta$, it is clear that $T_0$ is divisible by
$p^\delta$. For each $i \in \{1,\ldots,\lfloor\frac{n-1}{2}\rfloor\}$, we will show that $T_i$ is divisible by $p^{\delta+1}$. 

Fix $j \in \{1,\ldots,\lfloor\frac{n-1}{2}\rfloor\}$ and let $\bar{m}p^\xi$ be the $p$-factorisation of $2j+1$. Since $j,\alpha,\bar{m} \geq 1$, $(p,\alpha) \neq (3,1)$ and $j =
\frac{\bar{m}p^\xi-1}{2}$, it is not difficult to see that $\alpha j \geq j >
\xi$. Note that $T_j$ is divisible by $p^{\alpha j}$. If $\xi>\delta$, then it 
can be seen that $\alpha j > \xi \geq
\delta+1$ and hence that $T_j$ is divisible by $p^{\delta+1}$. If $\xi \leq \delta$, then $\binom{n}{2j+1} =
\frac{n}{2j+1}\binom{n-1}{2j} = p^{\delta -
\xi}\frac{\bar{n}}{\bar{m}}\binom{n-1}{2j}$, and so $\binom{n}{2j+1}$ is
divisible by $p^{\delta - \xi}$. So $T_j$ is divisible by $p^{\alpha j + \delta
- \xi}$, $\alpha j \geq \xi+1$, and $T_j$ is divisible by $p^{\delta+1}$.

So $T_i$ is divisible by $p^{\delta +1}$ for each $i \in \{1,\ldots,\lfloor\frac{n-1}{2}\rfloor\}$ and $T_0$ is divisible by $p^\delta$. It follows that $\frac{g_n}{p^\delta}$ is an integer and
\begin{align*}
\frac{g_n}{p^\delta} &\equiv \mfrac{1}{2^{n-1}}\mfrac{T_0}{p^\delta} \mod {p} \\
 &\equiv  \mfrac{1}{2^{n-1}} \bar{n}(sp^\alpha-2)^{n-1} \mod {p} \\
 &\equiv  \mfrac{1}{2^{n-1}} \bar{n}(-2)^{n-1} \mod {p} \\
 &\equiv   (-1)^{n+1} \bar{n} \mod{p}.
\end{align*}
The result follows.
\end{proof}

\begin{lemma} \label{lemma lambda 2 Ham cycle}
Let $p$ be an odd prime and $a$ and $n$ be positive integers such that $a > 2$ and $a +2 \equiv 0 \mod{p}$. Then
\begin{itemize}
    \item[(i)]
$C_{p}(B_n(a)) = \left( \mfrac{(-1)^{\alpha\beta + \alpha +
\beta} s^\beta \bar{g}^\alpha}{p} \right)$; and
    \item[(ii)]
$C_{p}(B_n(a)) = \left( \mfrac{(-1)^{n}n}{p} \right)^\alpha$ when $n \not\equiv 0 \mod{p}$;
\end{itemize}
where $sp^{\alpha}$ and $\bar{g}p^{\beta}$ are the $p$-factorisations of $a+2$ and $g_n(a)$ respectively.
\end{lemma}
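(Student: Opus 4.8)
The plan is to begin from the identity
\[ C_p(B_n(a)) = (-(a+2)(a-2)^{n+1},-g_n(a))_p \medop\prod_{i=2}^n (-g_i(a), g_{i-1}(a))_p \]
furnished by Lemma~\ref{lemma: Bn in terms of gn}, and to collapse the product on the right to at most one nontrivial factor. Abbreviate $g_i = g_i(a)$. Since $p$ is odd and $a+2 \equiv 0 \mod{p}$ we have $a \equiv -2 \mod{p}$, so Lemma~\ref{lemma g lambda 2} gives $g_i \equiv (-1)^{i+1} i \mod{p}$; in particular $p \mid g_i$ exactly when $p \mid i$, and $g_i$ is a $p$-adic unit whenever $p \nmid i$. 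Because $p$ is odd, $(x,y)_p = 1$ whenever $x$ and $y$ are both units, so $(-g_i,g_{i-1})_p = 1$ unless $p \mid i$ or $p \mid (i-1)$. Thus only indices $i \equiv 0, 1 \mod{p}$ can contribute.

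The heart of the argument is a pairing of each multiple of $p$ with its successor. For a positive integer $m$ with $mp \leq n$, let $p^{v}$ be the exact power of $p$ in the $p$-factorisation of $g_{mp}$, and note that $g_{mp-1}$ and $g_{mp+1}$ are units. Using \eqref{eqn: Hil Legendre p}, the factors at $i=mp$ and $i=mp+1$ (when both lie in range) collapse to $\left(\mfrac{g_{mp-1}}{p}\right)^{v}$ and $\left(\mfrac{-g_{mp+1}}{p}\right)^{v}$ respectively, whose product is $\left(\mfrac{-g_{mp-1}g_{mp+1}}{p}\right)^{v}$. The recurrence gives $g_{mp+1} = a g_{mp} - g_{mp-1} \equiv -g_{mp-1} \mod{p}$, so $-g_{mp-1}g_{mp+1} \equiv g_{mp-1}^2$ is a nonzero square and each paired contribution is $1$, regardless of $v$. (This is why the coarse information of Lemma~\ref{lemma g lambda 2} suffices and the finer Lemma~\ref{lemma g star deltas} is not needed here.) Every contributing index lies in exactly one such pair except, when $p \mid n$, the terminal index $i=n$, whose partner $n+1$ falls outside the range. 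Hence $\prod_{i=2}^n(-g_i,g_{i-1})_p$ equals $1$ when $p \nmid n$, and equals the single surviving factor $(-g_n,g_{n-1})_p$ when $p \mid n$.

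It then remains to evaluate the head symbol and, when $p\mid n$, the surviving factor, via \eqref{eqn: Hil Legendre p}. Writing $a+2=sp^\alpha$ and $g_n=\bar g p^\beta$, and using $a-2\equiv -4\mod{p}$ together with $\left(\mfrac{-4}{p}\right)=\left(\mfrac{-1}{p}\right)$, a short computation expands $(-(a+2)(a-2)^{n+1},-g_n)_p$ into $\left(\mfrac{-1}{p}\right)^{\alpha\beta+n\beta+\alpha}\left(\mfrac{s}{p}\right)^{\beta}\left(\mfrac{\bar g}{p}\right)^{\alpha}$. When $p\mid n$ one uses $g_{n-1}\equiv(-1)^{n+1}\mod{p}$ (from Lemma~\ref{lemma g lambda 2}, as $n-1\equiv -1$) to write the surviving factor as $\left(\mfrac{-1}{p}\right)^{(n+1)\beta}$; multiplying and reducing the exponent of $\left(\mfrac{-1}{p}\right)$ modulo $2$ (here $(2n+1)\beta\equiv\beta$) yields exactly part~(i). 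When $p\nmid n$ we have $\beta=0$ and $\bar g=g_n\equiv(-1)^{n+1}n$, so the head symbol is $\left(\mfrac{-g_n}{p}\right)^{\alpha}=\left(\mfrac{(-1)^n n}{p}\right)^{\alpha}$, which is part~(ii) and also agrees with part~(i) at $\beta=0$. The main obstacle is purely bookkeeping: correctly handling the boundary of the pairing (whether the terminal index survives is governed by $p\mid n$) and tracking valuations and unit parts through \eqref{eqn: Hil Legendre p} so that the final exponent of $\left(\mfrac{-1}{p}\right)$ collapses, modulo $2$, to $\alpha\beta+\alpha+\beta$.
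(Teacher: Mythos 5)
Your proof is correct and takes essentially the same route as the paper's: both start from Lemma~\ref{lemma: Bn in terms of gn}, collapse $\prod_{i=2}^n(-g_i,g_{i-1})_p$ by pairing the contributing indices $i\equiv 0,1\mod{p}$ around each multiple of $p$ (so that only $(-g_n,g_{n-1})_p$ survives when $p\mid n$), and then evaluate the remaining Hilbert symbols via \eqref{eqn: Hil Legendre p}. The only cosmetic difference is organisational: the paper splits the evaluation of the leading symbol into the cases $n$ odd and $n$ even, while you treat both at once by reducing $(a-2)^{n+1}\equiv(-4)^{n+1}\mod{p}$ to a power of $\left(\frac{-1}{p}\right)$ and tracking the exponent modulo $2$; likewise the paper deduces (ii) from (i) via Lemma~\ref{lemma g lambda 2}, whereas you derive (ii) directly in the $p\nmid n$ case.
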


\begin{proof}
Using Lemma \ref{lemma g lambda 2}, it is not difficult to see that (i) implies (ii), so it suffices to show that (i) holds. From Lemma \ref{lemma: Bn in terms of gn}, we have
\begin{equation} \label{eqn CPBn full product}
C_p(B_n) = (-(a+2)(a-2)^{n+1},-g_n)_p \prod\limits_{i=2}^n (-g_i, g_{i-1})_p.
\end{equation}
Next, we find an expression for  $\prod_{i=2}^n (-g_i, g_{i-1})_p$.

If neither $g_i$ nor $g_{i-1}$ is divisible by $p$, then $(-g_i, g_{i-1})_{p} =
1$.  Thus, by Lemma~\ref{lemma g lambda 2},
$$\prod_{i=2}^n (-g_i, g_{i-1})_p=\prod_{i\in S} (-g_i, g_{i-1})_p,$$
where $S=\{i \in \{2,\ldots,n\}: i\equiv 0,1 \mod{p}\}$.

For each integer $j \equiv 0 \mod{p}$, let $\bar{g}_jp^{\beta_j}$ be the $p$-factorisation of $g_{j}$. Note that $\beta_n=\beta$. For each integer $j \equiv 0 \mod{p}$, it
can be seen using Lemma~\ref{lemma g lambda 2} that both $-g_{j+1}$ and
$g_{j-1}$ are congruent to $(-1)^{j+1}$ modulo $p$ and hence, by \eqref{eqn: Hil
Legendre p}, we have
\[
(-g_{j+1}, g_j)_{p} = (-g_j,g_{j-1})_{p} = \left( \mfrac{(-1)^{j+1} }{p} \right)^{\beta_j}.
\]
Obviously this implies that $(-g_{j+1}, g_j)_{p}(-g_j,g_{j-1})_{p}=1$ for each integer $j \equiv 0 \mod{p}$. Using these facts it can be seen that
\begin{equation} \label{eqn gi product formula}
\prod_{i=2}^n (-g_i, g_{i-1})_p=
\left\{
  \begin{array}{ll}
    1, & \hbox{if $n \not\equiv 0 \mod{p}$;} \\
    (-g_n, g_{n-1})_p=\left( \mfrac{(-1)^{n+1} }{p} \right)^{\beta}, & \hbox{if $n \equiv 0 \mod{p}$.}
  \end{array}
\right.
\end{equation}
The proof now splits into cases according to whether $n$ is odd or even.

\noindent {\bf Case 1.} Suppose that $n$ is odd. Then, by \eqref{eqn gi product formula}, $\prod_{i=2}^n (-g_i, g_{i-1})_p=1$. So, from \eqref{eqn CPBn full product},
\begin{align*}
  C_p(B_n) &= (-a-2,-g_n)_p && \hbox{using \eqref{eqn: Hil square}} \\
  &= (-sp^{\alpha},-\bar{g}p^{\beta})_p  \\
  &= (s,\bar{g}p^{\beta})_p(-p^\alpha,\bar{g})_p(-p^\alpha,-p^{\beta})_p && \hbox{by \eqref{eqn: Hil split}} \\
  &= \left(\mfrac{s}{p} \right)^\beta
\left(\mfrac{\bar{g}}{p}\right)^{\alpha}  \left(\mfrac{-1}{p} \right)^{\alpha\beta+\alpha+\beta} && \hbox{by \eqref{eqn: Hil Legendre p}}.
\end{align*}
Using basic properties of Legendre symbols, the result follows.

\noindent {\bf Case 2.} Suppose that $n$ is even. Then, by \eqref{eqn gi product formula}, $\prod_{i=2}^n (-g_i, g_{i-1})_p=(\frac{-1}{p})^{\beta}$.
So, from \eqref{eqn CPBn full product},
\begin{align*}
  C_p(B_n) &= ((a+2)(2-a),-g_n)_p\left(\mfrac{-1}{p}\right)^{\beta} && \hbox{using \eqref{eqn: Hil square}} \\
  &= (sp^{\alpha}(4-sp^{\alpha}),-\bar{g}p^{\beta})_p\left(\mfrac{-1}{p}\right)^{\beta} \\
  &=
(s(4-sp^{\alpha}),-\bar{g}p^{\beta})_p(p^{\alpha}, \bar{g})_p(p^{\alpha},-p^{
\beta})_p\left(\mfrac{-1}{p}\right)^{\beta} && \hbox{by \eqref{eqn: Hil
split}}\\
  &= \left(\mfrac{4s}{p} \right)^\beta
\left(\mfrac{\bar{g}}{p}\right)^{\alpha}  \left(\mfrac{-1}{p} \right)^{\alpha\beta+\alpha}\left(\mfrac{-1}{p}\right)^{\beta} && \hbox{by \eqref{eqn: Hil Legendre p}}.
\end{align*}
Using basic properties of Legendre symbols, the result follows (note that $4=2^2$).
\end{proof}

\begin{lemma} \label{lemma v mod p}
Let $v$, $k$ and $\lambda$ be positive integers such that $k > \lambda + 2$ and $v = \frac{k(k-1)-2}{\lambda}+1$, and let $p$ be an odd prime such that $k-\lambda+2 \equiv 0 \mod{p}$.  Then
\begin{itemize}
    \item[(i)]
$\lambda v = (\lambda -2)^2 + sp^{\alpha}(sp^{\alpha}+2\lambda -5)$ where $sp^{\alpha}$ is the $p$-factorisation of $k-\lambda+2$; and
    \item[(ii)]
$v \equiv 0 \mod {p}$ if and only if $\lambda \equiv 2 \mod {p}$.
\end{itemize}
\end{lemma}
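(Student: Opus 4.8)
The plan is to derive part (i) as a direct algebraic identity and then obtain part (ii) by reducing that identity modulo $p$. Throughout, write $m=k-\lambda+2$, so that by hypothesis $m=sp^{\alpha}$ with $p\nmid s$ and, crucially, $\alpha\geq 1$ because $p$ divides $k-\lambda+2$.

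First I would rewrite the defining equation. Since $v=\frac{k(k-1)-2}{\lambda}+1$, clearing denominators gives $\lambda v = k(k-1)-2+\lambda = k^2-k-2+\lambda$. The key step is to substitute $k=m+\lambda-2$ (which is just $m=k-\lambda+2$ rearranged) and expand. The terms containing $m$ should collect as $m^2+2m(\lambda-2)-m = m(m+2\lambda-5)$, while the terms free of $m$ should simplify: one checks that $(\lambda-2)^2-(\lambda-2)-2+\lambda=(\lambda-2)^2$. Hence $\lambda v=(\lambda-2)^2+m(m+2\lambda-5)$, and replacing $m$ by its $p$-factorisation $sp^{\alpha}$ yields (i). This is routine bookkeeping; the only thing to watch is that the constant term telescopes exactly to $(\lambda-2)^2$ and no stray term involving $m$ is dropped.

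For part (ii), reduce the identity from (i) modulo $p$. Because $\alpha\geq 1$, the factor $sp^{\alpha}$ is divisible by $p$, so $\lambda v\equiv(\lambda-2)^2\mod{p}$. The equivalence then follows in two easy directions. If $p\mid v$, the left side vanishes modulo $p$, forcing $(\lambda-2)^2\equiv 0\mod{p}$ and hence $\lambda\equiv 2\mod{p}$. Conversely, if $\lambda\equiv 2\mod{p}$, then $(\lambda-2)^2\equiv 0\mod{p}$, so $\lambda v\equiv 0\mod{p}$; here I would use that $p$ is odd to note that $\lambda\equiv 2\not\equiv 0\mod{p}$, so $p\nmid\lambda$ and therefore $p\mid v$.

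I do not expect a genuine obstacle here, since both parts are elementary once the substitution $k=m+\lambda-2$ is made. The single subtle point, and the only place the oddness of $p$ is actually used, is the backward implication in (ii): passing from $\lambda v\equiv 0\mod{p}$ to $p\mid v$ requires knowing $p\nmid\lambda$, which is guaranteed precisely because $p$ is odd and $\lambda\equiv 2\mod{p}$.
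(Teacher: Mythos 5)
Your proposal is correct and follows essentially the same route as the paper: substitute $k = sp^{\alpha}+\lambda-2$ into $\lambda v = k(k-1)-2+\lambda$ to obtain the identity in (i), then reduce modulo $p$ for (ii), using that $p$ odd and $\lambda\equiv 2\mod{p}$ force $p\nmid\lambda$ in the backward direction. The only difference is that you spell out the algebra the paper dismisses as ``a straightforward calculation.''
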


\begin{proof}
Note that $\alpha \geq 1$. Because $v = \frac{k(k-1)-2}{\lambda}+1$ and $k = sp^{\alpha}+\lambda -2$, a straightforward calculation yields (i).

Suppose that $\lambda \equiv 2 \mod {p}$. Then $p$ divides $\lambda - 2$ and $p$ does not divide $\lambda$. So it follows from (i) that $v \equiv 0 \mod {p}$. Now suppose that $v \equiv 0 \mod {p}$. Then it follows immediately from (i) that $p$ divides $(\lambda -2)^2$ and hence that $\lambda \equiv 2 \mod {p}$.
\end{proof}

\begin{theorem}\label{theorem: no hamilton cycle v odd} Let $v$, $k$ and $\lambda$ be positive integers such that $k>\lambda+2$, $v = \frac{k(k-1)-2}{\lambda}+1$ and $v$ is odd. There does not exist a symmetric $(v,k,\lambda)$-covering whose excess is a Hamilton cycle if there is a prime $p \equiv 3 \mod{4}$ that divides both $v$ and $k-\lambda+2$ and such that either
\begin{itemize}
    \item[(i)] $\alpha$ is odd, $(p, \alpha) \neq
(3, 1)$, and either $\lambda>2$ and $\alpha < 2\gamma$ or $\lambda=2$; or
	\item[(ii)] $\lambda>2$, $\alpha = 2\gamma$, and $\delta$ is
odd;
\end{itemize}
where $sp^{\alpha}$ and $\bar{v}p^\delta$ are the $p$-factorisations of
$k-\lambda+2$ and $v$ respectively, and $\ell p^{\gamma}$ is the $p$-factorisation of $\lambda - 2$ if $\lambda >2$.
\end{theorem}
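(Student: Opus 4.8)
The plan is to show that, under either hypothesis (i) or (ii), the chosen prime $p$ satisfies $C_p(X_{(v,k,\lambda)}[v]) = -1$, whence Lemma~\ref{lemma: main lemma} rules out the covering. Write $a = k-\lambda$ and $X = X_{(v,k,\lambda)}[v]$. Since a Hamilton cycle is a single cycle of the odd length $v$, the cycle type $[v]$ has $t=1$ and $e=0$. We may assume $|X|$ is a perfect square, since otherwise no covering exists by Proposition~\ref{prop: matrix form}. Then Lemma~\ref{lem: full general cp} applies and, with $t=1$ and $e=0$, collapses to $C_p(X) = (-\lambda, a+2)_p \, C_p(B_v(a))$, because $f_p(a,\lambda,1,0) = (-\lambda, a+2)_p$.

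First I would record the arithmetic consequences of the hypotheses. Because $p$ divides both $v$ and $a+2 = k-\lambda+2$, Lemma~\ref{lemma v mod p}(ii) gives $\lambda \equiv 2 \mod{p}$; in particular $p \nmid \lambda$. Writing $a+2 = sp^\alpha$ with $p \nmid s$, the formula \eqref{eqn: Hil Legendre p} yields $(-\lambda, a+2)_p = \left(\frac{-\lambda}{p}\right)^\alpha$. Next, since $v \equiv 0 \mod{p}$ and $(p,\alpha) \neq (3,1)$ (this is assumed in (i); in (ii) it holds automatically because $\alpha = 2\gamma \geq 2$), Lemma~\ref{lemma g star deltas} shows that the $p$-factorisation of $g_v(a)$ is $\bar g p^\delta$ with $\bar g \equiv (-1)^{v+1}\bar v \equiv \bar v \mod{p}$ (using that $v$ is odd). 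Feeding $\beta = \delta$ and $\left(\frac{\bar g}{p}\right) = \left(\frac{\bar v}{p}\right)$ into Lemma~\ref{lemma lambda 2 Ham cycle}(i) expresses $C_p(B_v(a))$ as $\left(\frac{-1}{p}\right)^{\alpha\delta+\alpha+\delta}\left(\frac{s}{p}\right)^{\delta}\left(\frac{\bar v}{p}\right)^{\alpha}$, so that $C_p(X)$ becomes an explicit product of Legendre symbols in $s$, $\bar v$, $\alpha$ and $\delta$.

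The crux is then to pin down $\delta = v_p(v)$ and the residue of $\bar v$ (or of $s$) modulo $p$ using the identity of Lemma~\ref{lemma v mod p}(i), namely $\lambda v = (\lambda-2)^2 + sp^\alpha(sp^\alpha + 2\lambda - 5)$. Since $2\lambda - 5 \equiv -1 \mod{p}$, the second summand has $p$-valuation exactly $\alpha$, with cofactor $\equiv -s \mod{p}$. In case (i) --- both when $\lambda = 2$ (so $(\lambda-2)^2 = 0$) and when $\lambda > 2$ with $\alpha < 2\gamma$ (so $v_p((\lambda-2)^2) = 2\gamma > \alpha$) --- the term $(\lambda-2)^2$ does not interfere, giving $\delta = \alpha$ and $\lambda \bar v \equiv -s \mod{p}$, i.e. $\bar v \equiv -s/2 \mod{p}$. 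In case (ii), $\alpha = 2\gamma$ is even while $\delta$ is odd, so the two summands must partially cancel; since $(\lambda-2)^2 = \ell^2 p^{\alpha}$, this forces the cofactor $\ell^2 + s(sp^\alpha + 2\lambda - 5) \equiv \ell^2 - s \equiv 0 \mod{p}$, hence $s \equiv \ell^2 \mod{p}$ and so $\left(\frac{s}{p}\right) = 1$.

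Finally I would substitute these facts into the product for $C_p(X)$ and simplify the exponent parities, using $\left(\frac{-1}{p}\right) = -1$ from $p \equiv 3 \mod{4}$. In case (i), with $\delta = \alpha$ odd, every Legendre symbol raised to an odd power survives, the factor $\left(\frac{-1}{p}\right)^{\alpha^2 + 2\alpha} = -1$, and the relation $\bar v \equiv -s/2$ together with $\lambda \equiv 2$ makes $\left(\frac{-\lambda}{p}\right)\left(\frac{s}{p}\right)\left(\frac{\bar v}{p}\right) = 1$, leaving $C_p(X) = -1$. In case (ii), $\alpha$ even kills the $-\lambda$ and $\bar v$ factors, $\left(\frac{s}{p}\right)^\delta = \left(\frac{s}{p}\right) = 1$, and $\left(\frac{-1}{p}\right)^{\alpha\delta+\alpha+\delta} = -1$ since exactly $\delta$ is odd, again giving $C_p(X) = -1$. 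The main obstacle is the valuation bookkeeping of the third paragraph: extracting $\delta$ and the needed congruences from Lemma~\ref{lemma v mod p}(i) correctly in each subcase; once those are in hand, the Legendre-symbol parity computation is routine.
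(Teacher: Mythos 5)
Your proposal is correct and follows essentially the same route as the paper's proof: reduce via Lemma~\ref{lem: full general cp} to $C_p(X)=(-\lambda,k-\lambda+2)_p\,C_p(B_v(k-\lambda))$, evaluate $C_p(B_v(k-\lambda))$ through Lemmas~\ref{lemma g star deltas} and~\ref{lemma lambda 2 Ham cycle}(i), and then extract $\delta$ and the needed congruences ($\lambda\bar v\equiv -s$ or $s\equiv\ell^2\pmod p$) from Lemma~\ref{lemma v mod p}(i). The only difference is cosmetic: you fold the paper's Cases 1 ($\lambda=2$) and 2a ($\lambda>2$, $\alpha<2\gamma$) into a single uniform valuation argument on $(\lambda-2)^2$, which is a slight streamlining of the same computation.
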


\begin{proof} Let $p$ be a prime satisfying the hypotheses of the lemma and let $sp^{\alpha}$ and $\bar{v}p^\delta$ be the $p$-factorisations of $k-\lambda+2$ and $v$ respectively. Note that $\alpha,\delta \geq 1$. Let $X=X_{(v,k,\lambda)}[v]$. We may assume that $|X|$ is a perfect square for otherwise we are finished by Proposition \ref{prop: matrix form}. By Lemma \ref{lemma: main lemma} it suffices to show that $C_{p}(X) = -1$.

By Lemma \ref{lem: full general cp}, remembering that $v$ is odd, we have
\begin{align*}
  C_p(X) &= C_p(B_v(k-\lambda))(-\lambda,k-\lambda+2)_p \\
  &= C_p(B_v(k-\lambda))(-\lambda,sp^{\alpha})_p \\
  &= C_p(B_v(k-\lambda))\left(\mfrac{-\lambda}{p}\right)^{\alpha},
\end{align*}
where we used \eqref{eqn: Hil Legendre p} and the fact that $p$ does not divide
$\lambda$ to deduce the last equality. By Lemma~\ref{lemma g
star
deltas}, $g_v(k-\lambda) = \bar{g}p^{\delta}$ for some integer $\bar{g} \equiv
\bar{v} \mod {p}$ and thus, by Lemma~\ref{lemma lambda 2 Ham cycle}(i) (noting that $(p,\alpha) \neq (3,1)$), we have
\begin{equation}\label{eq no hamilton cycle v odd}
 C_{p}(X) = \left( \mfrac{(-1)^{\alpha\delta + \alpha +
\delta} s^\delta \bar{v}^\alpha}{p} \right)\left(\mfrac{-\lambda}{p}\right)^{\alpha}=\left( \mfrac{(-1)^{\delta(\alpha+1)} s^\delta (\lambda\bar{v})^\alpha}{p} \right).
\end{equation}

\noindent{\bf Case 1.} Suppose that $\lambda=2$, $\alpha$ is odd and $(p, 
\alpha) \neq (3, 1)$. By Lemma~\ref{lemma v mod p}(i),
$2\bar{v}p^{\delta} = s(sp^{\alpha}-1)p^{\alpha}$. So, since $s(sp^{\alpha}-1)
\equiv -s \mod{p}$, it follows that $\delta=\alpha$ and $2\bar{v} \equiv -s
\mod{p}$. Then \eqref{eq no hamilton cycle v odd}
implies $C_{p}(X)=(\frac{-s^{2}}{p})=(\frac{-1}{p})=-1$ as required.

\noindent{\bf Case 2.} Suppose that $\lambda>2$. Let $\ell p^{\gamma}$ be the $p$-factorisation of $\lambda - 2$. Because $v=\bar{v}p^\delta$ and $\lambda=\ell p^{\gamma}+2$, Lemma \ref{lemma v mod p}(i) implies that
\begin{align}
\lambda\bar{v}p^\delta  &= (\ell p^{\gamma})^2 + sp^{\alpha}(2(\ell p^{\gamma}+2) -5 + sp^{\alpha}) \nonumber\\
\lambda\bar{v}p^{\delta-\alpha} &= \ell^2p^{2\gamma-\alpha} + s(2\ell p^{\gamma}  + sp^{\alpha} -1). \label{eqn no ham cycle case 2 eq}
\end{align}
Recall that $\alpha \geq 1$ and note that $\lambda \equiv 2 \mod {p}$ by Lemma
\ref{lemma v mod p}(ii), so $\gamma \geq 1$. The proof now splits into subcases
according to whether the assumptions of (i) or (ii) hold.

\noindent{\bf Case 2a.} Suppose further that $\alpha$ is odd, $\alpha < 2\gamma$ and $(p, \alpha) \neq (3, 1)$.  Then the right hand side of \eqref{eqn no ham cycle case 2 eq} is an integer congruent to $-s$ modulo $p$. So $\delta=\alpha$ and $\lambda\bar{v} \equiv -s \mod{p}$. Now, using \eqref{eq no hamilton cycle v odd},
we have $C_{p}(X) = (\frac{-s^2}{p}) = (\frac{-1}{p}) = -1$ as
required.

\noindent{\bf Case 2b.} Suppose further that $\alpha = 2\gamma$ and $\delta$ is odd. Then the right hand side of \eqref{eqn no ham cycle case 2 eq} is an integer and, because $p$ does not divide $\lambda$, it follows that $\delta \geq \alpha+1$. So $p$ divides the right hand side of \eqref{eqn no ham cycle case 2 eq} and it follows that $s \equiv \ell^2 \mod{p}$. Now, using \eqref{eq no hamilton cycle v odd},
we have $C_{p}(X) = (\frac{-s}{p}) = (\frac{-\ell^2}{p}) = -1$ as
required.
\end{proof}

\begin{remark}
It can be shown that Theorem~\ref{theorem: no hamilton cycle v odd} is close to
the best result achievable via Lemma~\ref{lemma: main lemma}. Specifically, if
$k>\lambda+2$, $k-\lambda+2 \equiv 0 \mod{p}$, $v = \frac{k(k-1)-2}{\lambda}+1$
and $|X_{(v,k,\lambda)}[v]|$ is a perfect square, but the hypotheses of Theorem~\ref{theorem:
no hamilton cycle v odd} do not hold (because $p \not\equiv 3 \mod{4}$ or $v
\not\equiv p \mod{2p}$ or because (i) and (ii) fail), then $C_{p}(X_{(v,k,\lambda)}[v]) = 1$
unless $(p, \alpha) = (3, 1)$ and $s \equiv 1 \mod{3}$. When $(p, \alpha) = (3,
1)$ and $s \equiv 1 \mod{3}$, we have $C_3(X_{(v,k,\lambda)}[v])=-1$ for some $v$ and
$C_3(X_{(v,k,\lambda)}[v])=1$ for other $v$. In the interests of brevity we do not prove any of this here, however.
\end{remark}

We give an example of an infinite family of parameter sets for which Theorem 
\ref{theorem: no hamilton cycle v odd} rules out the existence of a symmetric 
covering with  Hamilton cycle excess.

\begin{corollary} \label{c: infinite family Hamilton}
Suppose $p$ is an odd prime, where $p \equiv 3 \mod{4}$, and $\alpha$ is an odd 
positive integer, such that $(p, \alpha ) \not= (3,1)$.  Then there does not 
exist a symmetric $(\frac{1}{2}p^\alpha(p^\alpha-1),p^\alpha,2)$-covering with  
Hamilton cycle excess.
\end{corollary}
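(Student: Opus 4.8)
The plan is to derive this corollary directly from Theorem~\ref{theorem: no hamilton cycle v odd} by specialising its parameters. I would set $k=p^\alpha$ and $\lambda=2$, and first check the one-line computation that $v=\frac{k(k-1)-2}{\lambda}+1$ then yields exactly $v=\frac{1}{2}p^\alpha(p^\alpha-1)$, matching the covering in the statement. I would also record that $k-\lambda+2=p^\alpha$, so that in the notation of Theorem~\ref{theorem: no hamilton cycle v odd} the $p$-factorisation of $k-\lambda+2$ has $s=1$ and exponent $\alpha$, and that the given prime $p\equiv3\mod{4}$ divides $k-\lambda+2$.

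Next I would verify the standing hypotheses of Theorem~\ref{theorem: no hamilton cycle v odd}. The inequality $k>\lambda+2$ amounts to $p^\alpha>4$: if $p=3$ then, since $\alpha$ is odd and $(p,\alpha)\neq(3,1)$, we have $\alpha\geq3$ and hence $p^\alpha\geq27$, while if $p\neq3$ then $p\geq7$ and $p^\alpha\geq7$, so $p^\alpha>4$ in every case. The remaining standing hypothesis is that $v$ is odd. The key observation here is that $p\equiv3\mod{4}$ together with $\alpha$ odd gives $p^\alpha\equiv3\mod{4}$, so that $\frac{p^\alpha-1}{2}$ is odd; since $v=p^\alpha\cdot\frac{p^\alpha-1}{2}$ is then a product of two odd integers, $v$ is odd.

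It remains to confirm that the prime $p$ of the corollary plays the role of the prime in Theorem~\ref{theorem: no hamilton cycle v odd}. I would note that $\gcd(p,p^\alpha-1)=1$, so the $p$-adic valuation of $v=p^\alpha\cdot\frac{p^\alpha-1}{2}$ is exactly $\alpha$; in particular $p\mid v$ and $\delta=\alpha$. Thus the prime $p\equiv3\mod{4}$ divides both $v$ and $k-\lambda+2$, as required. Finally, since $\lambda=2$, condition~(i) of Theorem~\ref{theorem: no hamilton cycle v odd} is met through its $\lambda=2$ branch, using that $\alpha$ is odd and $(p,\alpha)\neq(3,1)$ by hypothesis. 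Theorem~\ref{theorem: no hamilton cycle v odd} then immediately yields the nonexistence of a symmetric $(\frac{1}{2}p^\alpha(p^\alpha-1),p^\alpha,2)$-covering with Hamilton cycle excess.

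There is no genuine obstacle in this argument: the whole proof is a verification that the corollary's parameters satisfy the hypotheses of Theorem~\ref{theorem: no hamilton cycle v odd}. The only points requiring a moment's care are the parity argument showing that $v$ is odd, which is precisely where the congruence $p\equiv3\mod{4}$ and the oddness of $\alpha$ are used, and the small piece of bookkeeping by which the exclusion $(p,\alpha)\neq(3,1)$ simultaneously guarantees $k>\lambda+2$ and satisfies the exclusion appearing in condition~(i).
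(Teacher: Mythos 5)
Your proposal is correct and is exactly the argument the paper intends: the corollary is stated as a direct specialisation of Theorem~\ref{theorem: no hamilton cycle v odd} with $k=p^\alpha$, $\lambda=2$ (so $k-\lambda+2=p^\alpha$, $s=1$, $\delta=\alpha$), invoking condition~(i) via its $\lambda=2$ branch. Your verifications that $v=\tfrac{1}{2}p^\alpha(p^\alpha-1)$ is odd, that $p\mid v$, and that $(p,\alpha)\neq(3,1)$ forces $k>\lambda+2$ are precisely the bookkeeping the paper leaves to the reader.
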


\section{Excesses composed of uniform length cycles} \label{sec: uniform length cycles}

In this section we focus on establishing the nonexistence of symmetric
$(v,k,\lambda)$-coverings with excesses consisting of a number of cycles of the
same length. We begin with some computational results. Table~\ref{table: same cycle length} lists cycle types of the form $[n^t]$ that can be ruled out by Lemma~\ref{lemma: main lemma} as excesses of symmetric $(v,k,\lambda)$-coverings for $p < 10^4$, $1 \leq \lambda \leq 5$ and $\lambda + 2 < k < 30$. It does not include cycle types ruled out by Theorem~\ref{thm: BBHMS BRC} or those of the form $[v]$ (the latter are listed in Table~\ref{table: Ham cycles
excess}).

\begin{table}[h!]
\begin{center}
\footnotesize
\begin{tabular}{|lll|l|l||lll|l|l|}
\hline
 $v$ & $k$ & $\lambda$ & $[n^t]$ & $p$ & $v$ & $k$ & $\lambda$ & $[n^t]$ & $p$\\\hline\hline
55 & 8 & 1 & $ 11 ^{ 5 }$ & 43, 307 & 253 & 23 & 2 & $ 11 ^{ 23 }$ & 43 \\
155 & 13 & 1 & $ 31 ^{ 5 }$ & 2, \textbf{7} &  &  &  & $ 23 ^{ 11 }$ & \textbf{23} \\
305 & 18 & 1 & $ 61 ^{ 5 }$ & 6709 & 300 & 25 & 2 & $ 2 ^{ 150 }$ & 3, 7 \\
341 & 19 & 1 & $ 31 ^{ 11 }$ & 557, 2417 &  &  &  & $ 6 ^{ 50 }$ & 3, 7 \\
505 & 23 & 1 & $ 5 ^{ 101 }$ & 2, \textbf{3} &  &  &  & $ 10 ^{ 30 }$ & 3, 7 \\
 &  &  & $ 101 ^{ 5 }$ & 2, \textbf{3} &  &  &  & $ 30 ^{ 10 }$ & 3, 7 \\
&&&& &  &  &  & $ 50 ^{ 6 }$ & 3, 7 \\
15 & 6 & 2 & $ 3 ^{ 5 }$ & 2, 3 &  &  &  & $ 150 ^{ 2 }$ & 3, 7 \\
 &  & & $ 5 ^{ 3 }$ & 2, \textbf{3} & 325 & 26 & 2 & $ 5 ^{ 65 }$ & 19, 29 \\
21 & 7 & 2 & $ 7 ^{ 3 }$ & \textbf{7}, 13 &  &  &  & $ 25 ^{ 13 }$ & 2, \textbf{13}, 19, 29 \\
28 & 8 & 2 & $ 4 ^{ 7 }$ & 2, 3 &  &  &  & $ 65 ^{ 5 }$ & 2, \textbf{13}, 19, 29 \\
45 & 10 & 2 & $ 9 ^{ 5 }$ & 2, \textbf{5} & 351 & 27 & 2 & $ 3 ^{ 117 }$ & 2, \textbf{3} \\
 &  &  & $ 15 ^{ 3 }$ & 2, \textbf{5}, 29, 149 &  &  &  & $ 9 ^{ 39 }$ & 2, 71 \\
55 & 11 & 2 & $ 11 ^{ 5 }$ & \textbf{11}, 197 &  &  &  & $ 27 ^{ 13 }$ & 2, \textbf{3}, 71 \\
78 & 13 & 2 & $ 6 ^{ 13 }$ & 2, 5 &  &  &  & $ 39 ^{ 9 }$ & 2, 3 \\
91 & 14 & 2 & $ 7 ^{ 13 }$ & 2, 223 &  &  &  & $ 117 ^{ 3 }$ & 2, 71, 233, 1637 \\
105 & 15 & 2 & $ 15 ^{ 7 }$ & 3, \textbf{5}, 59, 509 & 406 & 29 & 2 & $ 14 ^{ 29 }$ & 41, 461 \\
&  & & $ 21 ^{ 5 }$ & 3, \textbf{5}, 419 & &&&& \\
&  &  & $ 35 ^{ 3 }$ & \textbf{3}, \textbf{5} & 169 & 23 & 3 & $ 13 ^{ 13 }$ & 2, \textbf{11} \\
120 & 16 & 2 & $ 4 ^{ 30 }$ & 2, 3 & &&&&\\
 & & & $ 12 ^{ 10 }$ & 2, 3 & 176 & 27 & 4 & $ 8 ^{ 22 }$ & 3, 7 \\
 & & & $ 20 ^{ 6 }$ & 2, 3 &  &  &  & $ 88 ^{ 2 }$ & 3, 7 \\
&  &  & $ 60 ^{ 2 }$ & 2, 3 & &&&&\\
153 & 18 & 2 & $ 3 ^{ 51 }$ & 2, 5 & 15 & 9 & 5 & $ 3 ^{ 5 }$ & 2, 3 \\
 &  & & $ 9 ^{ 17 }$ & 5, 71 &  &  &  & $ 5 ^{ 3 }$ & 2, 3 \\
 &  &  & $ 17 ^{ 9 }$ & 101 & 27 & 12 & 5 & $ 3 ^{ 9 }$ & 2, \textbf{3} \\
 & &  & $ 51 ^{ 3 }$ & 2, 5, 101, 2447, 5303 &  &  &  & $ 9 ^{ 3 }$ & 2, 107 \\
171 & 19 & 2 & $ 19 ^{ 9 }$ & \textbf{19}, 113, 227 & 55 & 17 & 5 & $ 5 ^{ 11 }$ & 2, 7 \\
 &  &  & $ 57 ^{ 3 }$ & \textbf{19}, 113, 227, 4217 &  &  &  & $ 11 ^{ 5 }$ & 2, 7 \\
190 & 20 & 2 & $ 38 ^{ 5 }$ & 37, 113, 797 & 93 & 22 & 5 & $ 31 ^{ 3 }$ & 991 \\
231 & 22 & 2 & $ 3 ^{ 77 }$ & 2, \textbf{11} & 111 & 24 & 5 & $ 3 ^{ 37 }$ & 2, 3 \\
 &  &  & $ 11 ^{ 21 }$ & 2 & 141 & 27 & 5 & $ 47 ^{ 3 }$ & 2, \textbf{3} \\
 &  &  & $ 21 ^{ 11 }$ & 2, \textbf{11}, 41 & &&&&\\
 &  &  & $ 33 ^{ 7 }$ & \textbf{11} & &&&&\\
 &  &  & $ 77 ^{ 3 }$ & 2 & &&&&\\
 \hline
\end{tabular}
\end{center}
\vspace{-0.6cm}

\caption{Cycle types $[n^t]$ that are ruled out by Lemma~\ref{lemma: main lemma} as excesses of symmetric $(v,k,\lambda)$-coverings.}
\label{table: same cycle length}
\end{table}

As in the previous section, we note that when $1 \leq \lambda \leq 5$ and
$\lambda + 2 < k < 30$, more cases can be ruled out using Lemma~\ref{lemma:
main lemma} when $v$ is odd than when $v$ is even. Furthermore,
Theorem~\ref{thm: BBHMS BRC} has already ruled out a significant portion of the
cases
when $v$ is even but none of the cases when $v$ is odd. Consequently we
investigate the case in which $v$ is odd, and hence both the number of cycles in the excess
and the cycle length are odd. Theorem~\ref{theorem same cycle length}
treats choices of $p$ that do not divide the cycle length and
Theorem~\ref{theorem same
cycle length divisible} treats choices of $p$ that do.  In Table~\ref{table:
same cycle
length}, we mark in boldface the choices of $p$ for which Theorem~\ref{theorem
same
cycle length}~or~\ref{theorem same cycle length divisible} can be used to rule
out the case.

\begin{theorem} \label{theorem same cycle length}
Let $n$, $t$, $k$ and $\lambda$ be positive integers such that $k > \lambda + 2$, $nt = \frac{k(k-1)-2}{\lambda}+1$ and $nt$ is odd. There does not exist a symmetric $(nt,k,\lambda)$-covering whose excess consists of $t$ cycles of length $n$ if there is an odd prime $p$ such that
$k-\lambda+2 \equiv 0 \mod{p}$, $n \not\equiv 0 \mod{p}$ and
\begin{itemize}
	\item $\alpha$ is even, $\gamma$ is odd and $(\frac{s}{p})=-1$; or
	\item $\alpha$ is odd, $\gamma$ is even and $\left(\mfrac{(-1)^{(t-1)/2}n\bar\lambda}{p}\right)=-1$; or
	\item $\alpha$ is odd, $\gamma$ is odd and $\left(\mfrac{(-1)^{(t+1)/2}ns\bar\lambda}{p}\right)=-1$;
\end{itemize}
where $sp^\alpha$ and $\bar{\lambda}p^{\gamma}$ are the $p$-factorisations of $k-\lambda+2$ and $\lambda$ respectively. Furthermore, for any odd prime $p$ such that $k-\lambda+2 \equiv 0 \mod{p}$ and $n \not\equiv 0 \mod{p}$ but $p$ does not satisfy the above hypotheses, $C_p(X_{(nt,k,\lambda)}[n^t]) = 1$.
\end{theorem}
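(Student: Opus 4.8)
The plan is to invoke Lemma~\ref{lemma: main lemma}: setting $X=X_{(nt,k,\lambda)}[n^t]$ and $a=k-\lambda$, it suffices to evaluate $C_p(X)$, showing it equals $-1$ under each of the three listed conditions and $+1$ otherwise. I may assume throughout that $|X|$ is a perfect square, since otherwise the covering is already excluded by Proposition~\ref{prop: matrix form} (this is also the standing assumption needed to apply Lemma~\ref{lem: full general cp}). Because $nt$ is odd, both $n$ and $t$ are odd, so the cycle type $[n^t]$ consists entirely of odd-length cycles; in the notation of Lemma~\ref{lem: full general cp} this means $e=0$ and every $c_i=n$, whence $C_p(X)=f_p(a,\lambda,t,0)\,C_p(B_n(a))^t$.

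Next I would evaluate the two factors separately. Since $t$ is odd, $C_p(B_n(a))^t=C_p(B_n(a))$; and since $p$ is an odd prime dividing $a+2=k-\lambda+2$, with $n\not\equiv 0\pmod p$ and $a=k-\lambda>2$, Lemma~\ref{lemma lambda 2 Ham cycle}(ii) applies and gives $C_p(B_n(a))=\left(\mfrac{(-1)^n n}{p}\right)^{\alpha}=\left(\mfrac{-n}{p}\right)^{\alpha}$, using that $n$ is odd. For $f_p(a,\lambda,t,0)$ I would use $(-1,-1)_p=1$ for odd $p$, so that only $(a+2,-1)_p^{\binom{t}{2}}$ and $(-\lambda,(a+2)^t)_p$ survive. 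Writing the $p$-factorisations $a+2=sp^{\alpha}$ and $\lambda=\bar{\lambda}p^{\gamma}$, the oddness of $t$ collapses $(-\lambda,(a+2)^t)_p$ to $(-\lambda,a+2)_p$, and evaluating both symbols through \eqref{eqn: Hil Legendre p} turns $f_p$ into a product of Legendre symbols.

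Combining the pieces and cancelling the even power $\left(\mfrac{-1}{p}\right)^{2\alpha}=1$, I expect to reach the single expression
$$C_p(X)=\left(\mfrac{-1}{p}\right)^{\alpha\left(\binom{t}{2}+\gamma\right)}\left(\mfrac{\bar{\lambda}}{p}\right)^{\alpha}\left(\mfrac{s}{p}\right)^{\gamma}\left(\mfrac{n}{p}\right)^{\alpha}.$$
Since $t$ is odd, $\binom{t}{2}\equiv\frac{t-1}{2}\pmod 2$, and I would then split into the four parity cases for $(\alpha,\gamma)$, folding powers of $\left(\mfrac{-1}{p}\right)$ into the symbol via $\left(\mfrac{-1}{p}\right)^m=\left(\mfrac{(-1)^m}{p}\right)$. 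When $\alpha,\gamma$ are both even, every exponent vanishes and $C_p(X)=1$. When $\alpha$ is even and $\gamma$ odd, only $\left(\mfrac{s}{p}\right)$ survives, matching the first condition. When $\alpha$ is odd and $\gamma$ even the formula reduces to $\left(\mfrac{(-1)^{(t-1)/2}n\bar{\lambda}}{p}\right)$, matching the second; and when both are odd it reduces to $\left(\mfrac{(-1)^{(t+1)/2}sn\bar{\lambda}}{p}\right)$, matching the third. In each case the stated condition is precisely the requirement that this symbol equal $-1$, yielding nonexistence by Lemma~\ref{lemma: main lemma}; and since the symbol is always $\pm1$, the ``furthermore'' claim follows at once, as $C_p(X)=1$ exactly when none of the three conditions holds.

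The main obstacle here is not conceptual but the careful bookkeeping of Hilbert and Legendre symbols: one must repeatedly exploit that $n$ and $t$ are odd in order to discard even exponents, replace $\left(\mfrac{s^t}{p}\right)$ by $\left(\mfrac{s}{p}\right)$, and collapse $(-\lambda,(a+2)^t)_p$ to $(-\lambda,a+2)_p$, while tracking how the parities of $\alpha$, $\gamma$, and $\binom{t}{2}$ interact. The two places most prone to error are getting the parity $\binom{t}{2}\equiv\frac{t-1}{2}$ right and confirming that the hypotheses $a+2\equiv0\pmod p$, $n\not\equiv0\pmod p$ and $a>2$ indeed license the use of Lemma~\ref{lemma lambda 2 Ham cycle}(ii).
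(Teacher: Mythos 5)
Your proposal is correct and follows essentially the same route as the paper's own proof: reduce via Proposition~\ref{prop: matrix form} and Lemma~\ref{lem: full general cp} (with $e=0$, $t$ odd), evaluate $C_p(B_n(k-\lambda))$ by Lemma~\ref{lemma lambda 2 Ham cycle}(ii), expand the remaining Hilbert symbols via \eqref{eqn: Hil Legendre p}, and check the four parity cases of $(\alpha,\gamma)$. Your consolidated formula $\left(\mfrac{(-1)^{\alpha(\binom{t}{2}+\gamma)}n^{\alpha}\bar{\lambda}^{\alpha}s^{\gamma}}{p}\right)$ agrees with the paper's $\left(\mfrac{(-1)^{\alpha(\gamma+(t-1)/2)}n^{\alpha}\bar{\lambda}^{\alpha}s^{\gamma}}{p}\right)$ since $\binom{t}{2}\equiv\frac{t-1}{2}\pmod{2}$ for odd $t$, and the case analysis and the ``furthermore'' conclusion match.
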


\begin{proof}
Let $p$ be a prime satisfying the hypotheses of the theorem. Let $X=X_{(nt,k,\lambda)}[n^t]$. We may assume that $|X|$ is a perfect square for otherwise we are finished by Proposition \ref{prop: matrix form}. By Lemma \ref{lem: full general cp}, noting that $n$ and $t$ are odd, we have
\begin{equation}\label{eqn same cycle length}
C_{p}(X) = C_{p}(B_n(k-\lambda)) (k-\lambda+2, -1)^{(t-1)/2}_{p} (-\lambda,
k-\lambda+2)_{p}.
\end{equation}
Since $n \not\equiv 0 \mod p$ and $n$ is odd, Lemma \ref{lemma lambda 2 Ham cycle}(ii) implies that $C_p(B_n(k-\lambda)) = (\frac{-n}{p})^\alpha$.
Also, using \eqref{eqn: Hil Legendre p}, $(-\lambda, k-\lambda+2)_p =
(- \bar{\lambda}p^{\gamma} , sp^\alpha)_p = (\frac{-1}{p})^{\alpha\gamma}(
\frac{-\bar\lambda}{p})^\alpha (\frac{s}{p})^\gamma$ and $(k-\lambda+2, -1)^{(t-1)/2}_p = (sp^\alpha, -1)^{(t-1)/2}_p = (\frac{-1}{p})^{\alpha(t-1)/2}$.
So, from \eqref{eqn same cycle length}, we have
$$C_p(X) = \left(
\mfrac{(-1)^{\alpha(\gamma + (t-1)/2)} n^\alpha \bar\lambda^\alpha s^\gamma}{p} \right).$$
The result now follows from Lemma \ref{lemma: main lemma} by checking cases.
\end{proof}

We remarked after Theorem~\ref{theorem: no hamilton cycle v odd} that Lemma~\ref{lemma: main lemma} cannot rule out Hamilton cycle excesses when $v \not\equiv 0 \mod{p}$.  It follows that Theorem~\ref{theorem same cycle length} never rules out Hamilton cycle excesses.

The following corollary gives one example of an infinite family of symmetric coverings with specified excesses whose existence is ruled out by Theorem \ref{theorem same cycle length}.

\begin{corollary} \label{c: infinite family same cycle not div by p}
Suppose $p$ is prime and $p \equiv 3, 5 \mod{8}$. Then there does not exist a 
symmetric $(2p^2-p,2p,2)$-covering with excess consisting of $p$ cycles of 
length $2p-1$.
\end{corollary}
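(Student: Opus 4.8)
The plan is to deduce this directly from Theorem~\ref{theorem same cycle length} by taking the cycle length to be $n=2p-1$, the number of cycles to be $t=p$, the block size $k=2p$, and $\lambda=2$, and by using the prime $p$ itself as the prime in that theorem. First I would check the arithmetic hypotheses. A direct calculation gives $nt=(2p-1)p=2p^2-p$, while $\frac{k(k-1)-2}{\lambda}+1=\frac{2p(2p-1)-2}{2}+1=2p^2-p$, so the two agree. Since $2p-1$ and $p$ are both odd, $nt$ is odd; and since $p\geq3$ we have $k=2p>4=\lambda+2$. Thus the hypotheses $k>\lambda+2$, $nt=\frac{k(k-1)-2}{\lambda}+1$ and $nt$ odd all hold.

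Next I would verify the conditions on the chosen prime. Here $k-\lambda+2=2p\equiv0\mod{p}$ and $n=2p-1\equiv-1\mod{p}$, so $n\not\equiv0\mod{p}$, as required. As $p$ is odd, the $p$-factorisation of $k-\lambda+2=2p$ is $2\cdot p^{1}$, giving $s=2$ and $\alpha=1$, and the $p$-factorisation of $\lambda=2$ is $2\cdot p^{0}$, giving $\bar{\lambda}=2$ and $\gamma=0$. Hence $\alpha$ is odd and $\gamma$ is even, placing us in the second case of Theorem~\ref{theorem same cycle length}. It therefore remains to show that $\left(\frac{(-1)^{(t-1)/2}n\bar{\lambda}}{p}\right)=-1$.

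Finally I would evaluate this Legendre symbol, which I expect to be the only real computation. Substituting $t=p$, $n\equiv-1\mod{p}$ and $\bar{\lambda}=2$ and using multiplicativity gives $\left(\frac{(-1)^{(p-1)/2}n\bar{\lambda}}{p}\right)=\left(\frac{-1}{p}\right)^{(p-1)/2}\left(\frac{-1}{p}\right)\left(\frac{2}{p}\right)$. I would then split into the cases $p\equiv3\mod{8}$ and $p\equiv5\mod{8}$: in the first, $(p-1)/2$ is odd and $\left(\frac{-1}{p}\right)=-1$, while in the second $(p-1)/2$ is even and $\left(\frac{-1}{p}\right)=1$; in both cases $\left(\frac{2}{p}\right)=-1$, and a short check shows the product equals $-1$ in each. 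The feature that makes $p\equiv3,5\mod{8}$ exactly the right hypothesis is precisely that $\left(\frac{2}{p}\right)=-1$ for both residue classes. Once the symbol is confirmed to be $-1$, Theorem~\ref{theorem same cycle length} yields the claimed nonexistence. The only mild subtlety is tracking the interaction between the exponent $(-1)^{(p-1)/2}$ inside the symbol and the value of $\left(\frac{-1}{p}\right)$, but this is routine.
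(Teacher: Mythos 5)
Your proposal is correct and is exactly the argument the paper intends: the corollary is stated as a direct application of Theorem~\ref{theorem same cycle length}, and your instantiation ($n=2p-1$, $t=p$, $k=2p$, $\lambda=2$, prime $p$, giving $s=2$, $\alpha=1$, $\gamma=0$, hence the second bullet) together with the Legendre-symbol evaluation $\left(\frac{(-1)^{(p-1)/2}n\bar{\lambda}}{p}\right)=\left(\frac{-1}{p}\right)^{(p-1)/2}\left(\frac{-1}{p}\right)\left(\frac{2}{p}\right)=-1$ for $p\equiv 3,5 \mod{8}$ is precisely the verification required. Nothing is missing.
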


\begin{theorem} \label{theorem same cycle length divisible}
Let $n$, $t$, $k$ and $\lambda$ be positive integers such that $k > \lambda +
2$, $nt = \frac{k(k-1)-2}{\lambda}+1$ and $nt$ is odd. There does not exist a symmetric
$(nt,k,\lambda)$-covering whose excess consists of $t$ cycles of length $n$ if
there is an odd prime $p$ such that $k-\lambda+2 \equiv 0 \mod{p}$, $n \equiv 0
\mod{p}$, $k-\lambda+2 \equiv 0 \mod{9}$ if $p=3$, and
\begin{itemize}
	\item $\alpha$ is even, $\delta$ is odd and $(\frac{-s}{p})=-1$; or
	\item $\alpha$ is odd, $\delta$ is even and $\left(\mfrac{(-1)^{(t-1)/2}2\bar{n}}{p}\right)=-1$; or
	\item $\alpha$ is odd, $\delta$ is odd and $\left(\mfrac{(-1)^{(t-1)/2}2s\bar{n}}{p}\right)=-1$;
\end{itemize}
where $sp^\alpha$ and $\bar{n}p^{\delta}$ are the $p$-factorisations of $k-\lambda+2$ and $n$ respectively. Furthermore, for any odd prime $p$ such that $k-\lambda+2 \equiv 0 \mod{p}$, $n \equiv 0 \mod{p}$, and $k-\lambda+2 \equiv 0 \mod{9}$ if $p=3$, but $p$ does not satisfy the above hypotheses, $C_p(X_{(nt,k,\lambda)}[n^t]) = 1$.
\end{theorem}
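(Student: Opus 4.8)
This theorem is the ``$n \equiv 0 \mod p$'' companion to Theorem~\ref{theorem same cycle length}, so I would follow exactly the same skeleton, with the key difference being how $C_p(B_n(k-\lambda))$ is evaluated. As before, let $X=X_{(nt,k,\lambda)}[n^t]$ and assume $|X|$ is a perfect square (otherwise Proposition~\ref{prop: matrix form} finishes). Since $n$ and $t$ are both odd, Lemma~\ref{lem: full general cp} gives the same decomposition
\begin{equation*}
C_p(X) = C_p(B_n(k-\lambda))\,(k-\lambda+2,-1)_p^{(t-1)/2}\,(-\lambda,k-\lambda+2)_p,
\end{equation*}
and by Lemma~\ref{lemma: main lemma} it suffices to show $C_p(X)=-1$ under each of the three listed conditions (and $C_p(X)=1$ otherwise). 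The two trailing Hilbert-symbol factors are handled by direct application of \eqref{eqn: Hil Legendre p} and $p$-factorisations, just as in Theorem~\ref{theorem same cycle length}.

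\textbf{The main new ingredient.} The crucial step is computing $C_p(B_n(k-\lambda))$ when $p \mid n$. Here I would invoke Lemma~\ref{lemma lambda 2 Ham cycle}(i), which gives $C_p(B_n(a))=\bigl(\tfrac{(-1)^{\alpha\beta+\alpha+\beta}s^\beta\bar g^\alpha}{p}\bigr)$ in terms of the $p$-factorisation $\bar g p^\beta$ of $g_n(k-\lambda)$. To pin down $\bar g$ and $\beta$, I would apply Lemma~\ref{lemma g star deltas}: since $p\mid n$ and $(p,\alpha)\neq(3,1)$ (the hypothesis $k-\lambda+2\equiv 0\mod 9$ when $p=3$ is precisely what rules out the bad case $\alpha=1$), it yields $g_n(k-\lambda)=\bar g p^{\delta}$ with $\bar g \equiv (-1)^{n+1}\bar n \equiv \bar n \mod p$ (using $n$ odd), where $\delta$ is the $p$-adic valuation of $n$. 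Thus $\beta=\delta$ and $\bar g \equiv \bar n$, and Lemma~\ref{lemma lambda 2 Ham cycle}(i) becomes
\begin{equation*}
C_p(B_n(k-\lambda)) = \left(\mfrac{(-1)^{\alpha\delta+\alpha+\delta}s^\delta \bar n^\alpha}{p}\right).
\end{equation*}

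\textbf{Assembling and checking cases.} Multiplying the three factors together and collecting the exponents of $(-1)$, $s$, $\bar n$, $\bar\lambda$ should produce a single closed-form Legendre symbol for $C_p(X)$, analogous to the expression $\bigl(\tfrac{(-1)^{\alpha(\gamma+(t-1)/2)}n^\alpha\bar\lambda^\alpha s^\gamma}{p}\bigr)$ obtained in Theorem~\ref{theorem same cycle length}. I expect the $\bar\lambda$-dependence to drop out or simplify via Lemma~\ref{lemma v mod p}: since $p\mid v=nt$ and $p\mid n$, the relation $\lambda v=(\lambda-2)^2+sp^\alpha(sp^\alpha+2\lambda-5)$ constrains $\lambda$ modulo $p$, and I would use it (together with $2 \equiv \lambda \mod p$ following from part~(ii) when $v\equiv 0$) to replace $\bar\lambda$ by an expression in $s$, producing the factor $2$ and the sign $s$ that appear in the stated conditions. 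The result then follows by reading off the three parity cases for $\alpha$ and $\delta$ exactly as before, and verifying $C_p(X)=1$ in all remaining cases.

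\textbf{The main obstacle.} The delicate point is the arithmetic bookkeeping connecting $\lambda$, $s$, and $n$ modulo $p$ through Lemma~\ref{lemma v mod p}, in order to see how $\bar\lambda$ in Theorem~\ref{theorem same cycle length}'s formula is converted into the factor $2\bar n$ (and the sign $s$) that appears here. This substitution is what distinguishes the divisible case from the non-divisible one, and getting the exponents of $(-1)$ to match the stated $(t-1)/2$ parities—rather than accidentally shifting by a unit—will require care; I would double-check it against one concrete boldface entry of Table~\ref{table: same cycle length}, such as $(v,k,\lambda)=(325,26,2)$ with $p=13$ and $[25^{13}]$.
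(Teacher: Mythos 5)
Your proposal follows the paper's proof essentially step for step: the same decomposition of $C_p(X)$ via Lemma~\ref{lem: full general cp} using $n$, $t$ odd, the same evaluation of $C_p(B_n(k-\lambda))$ by combining Lemma~\ref{lemma g star deltas} (with the mod-$9$ hypothesis excluding $(p,\alpha)=(3,1)$ and $n$ odd giving $\bar g\equiv\bar n$, $\beta=\delta$) with Lemma~\ref{lemma lambda 2 Ham cycle}(i), and the same use of Lemma~\ref{lemma v mod p}(ii) to get $\lambda\equiv 2\mod{p}$ before collecting everything into one Legendre symbol and checking the three parity cases. The only loose phrasing is ``replace $\bar\lambda$ by an expression in $s$'': in fact $\lambda\equiv 2\mod{p}$ simply turns $(-\lambda,sp^\alpha)_p$ into $\left(\frac{-2}{p}\right)^{\alpha}$, which is exactly where the factor $2$ comes from, while the $s$ is already present in $C_p(B_n(k-\lambda))$ --- precisely the fact you cite.
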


\begin{proof}
Let $p$ be a prime satisfying the hypotheses of the theorem. Let $X=X_{(nt,k,\lambda)}[n^t]$. We may assume that $|X|$ is a perfect square for otherwise we are finished by Proposition \ref{prop: matrix form}. By Lemma \ref{lem: full general cp}, noting that $n$ and $t$ are odd, we have
\begin{equation}\label{eqn same cycle length v2}
C_{p}(X) = C_{p}(B_n(k-\lambda)) (k-\lambda+2, -1)^{(t-1)/2}_{p} (-\lambda,
k-\lambda+2)_{p}.
\end{equation}
Because $n$ is odd, $g_n(a) = \bar{g}p^{\delta}$ for some integer $\bar{g} \equiv \bar{n} \mod {p}$ by Lemma \ref{lemma g star deltas} and hence Lemma \ref{lemma lambda 2 Ham cycle}(i) implies that
$$C_p(B_n(k-\lambda)) =  \left( \mfrac{(-1)^{(\alpha\delta + \alpha +
\delta)}s^\delta \bar{n}^\alpha}{p} \right).$$
By Lemma~\ref{lemma v mod p}(ii), $\lambda \equiv 2
\mod{p}$. So, using \eqref{eqn: Hil Legendre p}, $(-\lambda, k-\lambda+2)_p =
(-\lambda , sp^\alpha)_p = (\frac{-\lambda}{p})^{\alpha} = (\frac{-2}{p})^{\alpha}$ and $(k-\lambda+2, -1)^{(t-1)/2}_p = (sp^\alpha, -1)^{(t-1)/2}_p = (\frac{-1}{p})^{\alpha(t-1)/2}$ .
Thus, from \eqref{eqn same cycle length v2}, we have
$$C_p(X) = \left( \mfrac{ (-1)^{\alpha(t-1)/2+\alpha\delta+\delta}s^\delta 2^\alpha\bar{n}^\alpha}{p} \right).$$
The result now follows from Lemma \ref{lemma: main lemma} by checking cases.
\end{proof}

Theorem~\ref{theorem same cycle length divisible} with $t = 1$ produces identical results to Theorem~\ref{theorem: no hamilton cycle v odd}.  However, we were able to phrase Theorem~\ref{theorem: no hamilton cycle v odd} without resorting to Legendre symbols.

Again, we give an example of an infinite family of symmetric coverings with specified excesses whose existence is ruled out by Theorem \ref{theorem same cycle length divisible}.

\begin{corollary} \label{c: infinite family same cycle div by p}
Suppose $p$ is prime, $p > 3$, and $p \equiv 3, 7 \mod{8}$. Then there does not 
exist a symmetric $(\frac{1}{2}p(p-1), p ,2)$-covering with excess consisting 
of $\frac{p-1}{2}$ cycles of length $p$.
\end{corollary}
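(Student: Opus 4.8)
The plan is to derive this as a direct specialisation of Theorem~\ref{theorem same cycle length divisible} with $k=p$, $\lambda=2$, $n=p$ and $t=\frac{p-1}{2}$, so that the covering in question is exactly one whose excess consists of $t$ cycles of length $n$. First I would check the global bookkeeping. Since $nt=p\cdot\frac{p-1}{2}=\frac12 p(p-1)$ this matches the number of points, and $nt=\frac{k(k-1)-2}{\lambda}+1$ holds because $\frac{p(p-1)-2}{2}+1=\frac12 p(p-1)$. The hypothesis $p\equiv 3,7\mod{8}$ forces $p\equiv 3\mod{4}$, so $t=\frac{p-1}{2}$ is odd; hence $nt$ is odd and $\frac{t-1}{2}$ is a genuine integer, as the theorem requires. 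Finally $k=p\geq 5>\lambda+2$, so all the standing hypotheses of Theorem~\ref{theorem same cycle length divisible} are satisfied.

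Next I would verify the divisibility conditions and read off the $p$-factorisations. Here $k-\lambda+2=p\equiv 0\mod{p}$ and $n=p\equiv 0\mod{p}$, while the supplementary condition ``$k-\lambda+2\equiv 0\mod{9}$ if $p=3$'' is vacuous because $p>3$. The $p$-factorisation of $k-\lambda+2=p$ is $sp^{\alpha}$ with $s=1$ and $\alpha=1$, and that of $n=p$ is $\bar np^{\delta}$ with $\bar n=1$ and $\delta=1$. Since $\alpha$ and $\delta$ are both odd, the only potentially applicable case in Theorem~\ref{theorem same cycle length divisible} is the third bullet, so it remains to check the single Legendre symbol $\left(\frac{(-1)^{(t-1)/2}\,2s\bar n}{p}\right)=\left(\frac{(-1)^{(t-1)/2}\,2}{p}\right)$ and confirm it equals $-1$.

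The final step is a short computation splitting on $p\bmod 8$, using $\frac{t-1}{2}=\frac{p-3}{4}$. When $p\equiv 3\mod{8}$ we have $p-3\equiv 0\mod{8}$, so $\frac{p-3}{4}$ is even and the symbol reduces to $\left(\frac{2}{p}\right)$, which is $-1$ since $p\equiv 3\mod{8}$. When $p\equiv 7\mod{8}$ we have $p-3\equiv 4\mod{8}$, so $\frac{p-3}{4}$ is odd and the symbol becomes $\left(\frac{-2}{p}\right)=\left(\frac{-1}{p}\right)\left(\frac{2}{p}\right)$; here $\left(\frac{-1}{p}\right)=-1$ because $p\equiv 3\mod{4}$, and $\left(\frac{2}{p}\right)=1$ because $p\equiv 7\mod{8}$, so the product is again $-1$. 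In both residue classes the hypothesis of the third bullet holds, and Theorem~\ref{theorem same cycle length divisible} then yields that no symmetric $(\frac12 p(p-1),p,2)$-covering with the stated excess exists.

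There is essentially no structural obstacle: once the parameters are matched to the theorem, the whole argument collapses to evaluating one Legendre symbol. The only point demanding care is the parity of $\frac{t-1}{2}=\frac{p-3}{4}$ and the way the extra factor of $2$ interacts with it, since it is precisely this interplay that makes both classes $p\equiv 3,7\mod{8}$ give $-1$. I would also double-check that $p\equiv 3\mod{4}$ is genuinely forced (it is needed to keep $nt$ odd so that the theorem applies), which is why the correct hypothesis is $p\equiv 3,7\mod{8}$ rather than some other pair of residues modulo $8$.
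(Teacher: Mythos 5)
Your proposal is correct and follows exactly the route the paper intends: Corollary~\ref{c: infinite family same cycle div by p} is stated as a direct specialisation of Theorem~\ref{theorem same cycle length divisible} with $k=n=p$, $\lambda=2$, $t=\frac{p-1}{2}$, so that $s=\bar n=1$, $\alpha=\delta=1$, and the third bullet reduces to the Legendre symbol $\left(\frac{(-1)^{(p-3)/4}\,2}{p}\right)$, which your case split on $p\bmod 8$ correctly evaluates to $-1$. All the bookkeeping checks (parameter identity, oddness of $nt$ via $p\equiv 3\bmod 4$, vacuousness of the $p=3$ clause) are exactly the ones needed.
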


\ignore{
We conclude this section with some computational results obtained by applying both Theorems ~\ref{theorem same cycle length} and ~\ref{theorem same cycle length divisible}. In the following discussion, a case is a triple $(nt, k, \lambda)$, where
$nt = \frac{k(k-1)-2}{\lambda} +1$ and $t \geq 2$. The cases in
Table~\ref{table: same cycle length} which are covered by Theorems~
\ref{theorem same cycle length}~and~\ref{theorem same cycle length divisible}
are noted in boldface and satisfy $\lambda \equiv 0 \mod{p}$ or $n \equiv 0
\mod{p}$, respectively.  We remark that out of all possible cases when $1
\le \lambda \le 10$ and $\lambda+2 < k < 1000$ and $p < 10^4$,
Theorems~\ref{theorem same cycle length}~and~\ref{theorem same cycle length
divisible} together rule out between 40 and 80 percent. Moreover, none of these
cases are ruled out by Theorem~\ref{thm: BBHMS BRC}.

\begin{center}
\footnotesize
\begin{tabular}{l||c|c|c|c|c|c|c|c|c|c}
$\lambda$  &  1  & 2 &  3  & 4 & 5 & 6 & 7 & 8 & 9 & 10 \\
\hline
\% cases ruled out by Theorem \ref{theorem same cycle length} or \ref{theorem same cycle length divisible}  &
58.8  & 82.2 & 44.0 & 61.6 & 53.3 & 56.3 & 62.2 & 47.8 & 62.0 & 63.1 \\
\end{tabular}
\end{center}

}

\section{Excesses composed of 2-cycles and 3-cycles} \label{sec: 2 and 3 cycles}

In this section we focus on establishing the nonexistence of symmetric $(v,k,\lambda)$-coverings whose excesses consist of 2-cycles and 3-cycles. As mentioned, results of Bose and Connor (see \cite{BC52}) already cover the cases in which the excess is composed entirely of $2$-cycles or entirely of $3$-cycles.

Table~\ref{table: 2 and 3 cycles} lists cycle types of the form $[2^{t_2}, 3^{t_3}]$ that can be ruled out by Lemma~\ref{lemma: main lemma} as excesses of symmetric $(v,k,1)$-coverings for $p < 10$ and $4 \leq k \leq 10$. Computational results for small values of $\lambda$ and $k$ show that taking $p = 5$ or $p = 2$ often rules out cycle types composed entirely of $2$-cycles and $3$-cycles. In Theorem \ref{theorem 2s and 3s p5} and Lemma \ref{theorem 2s and 3s} we consider the choices $p=5$ and $p=2$ respectively. In Table \ref{table: 2 and 3 cycles}, we mark in boldface the cases  for which $p = 2$ rules out the case using Theorem \ref{theorem 2s and 3s} and for which $p = 5$ rules out the case using Theorem \ref{theorem 2s and 3s p5}.

\begin{table}[h!]
\begin{center}
\footnotesize
\begin{tabular}{|lll|l|l|}
\hline
 $v$ & $k$ & $\lambda$ & $[2^{t_2}, 3^{t_3}]$ & $p$ \\\hline\hline
11 & 4 & 1  & $2^{ 4 }3^{ 1 }$ & \textbf{2}, \textbf{5}\\
\hline
19 & 5 & 1  & $2^{ 2 }3^{ 5 }$ & \textbf{2}, 3\\
& & & $2^{ 5 }3^{ 3 }$ & 2, 3\\
\hline
29 & 6 & 1  & $2^{ 1 }3^{ 9 }$, $2^{ 13 }3^{ 1 }$ & 2, 3\\
& & & $2^{ 7 }3^{ 5 }$ & 2, 7\\
& & & $2^{ 10 }3^{ 3 }$ & 3, 7\\
\hline
41 & 7 & 1  & $2^{ 1 }3^{ 13 }$ , $2^{ 4 }3^{ 11 }$ , $2^{ 7 }3^{ 9 }$ , $2^{ 10 }3^{ 7 }$ , $2^{ 13 }3^{ 5 }$, $2^{ 16 }3^{ 3 }$ , $2^{ 19 }3^{ 1 }$ & 2, \textbf{5}\\
\hline
55 & 8 & 1  & $2^{ 2 }3^{ 17 }$, $2^{ 8 }3^{ 13 }$, $2^{ 14 }3^{ 9 }$, $2^{ 20 }3^{ 5 }$, $2^{ 26 }3^{ 1 }$  & \textbf{2}, 3\\
& & & $2^{ 5 }3^{ 15 }$, $2^{ 11 }3^{ 11 }$ , $2^{ 17 }3^{ 7 }$, $2^{ 23 }3^{ 3 }$ & 3, \textbf{5}\\
\hline
71 & 9 & 1  & $2^{ 1 }3^{ 23 }$, $2^{ 13 }3^{ 15 }$, $2^{ 25 }3^{ 7 }$ & 2, 3\\
& & & $2^{ 4 }3^{ 21 }$, $2^{ 16 }3^{ 13 }$, $2^{ 28 }3^{ 5 }$ & \textbf{2}, \textbf{5} \\
& & & $2^{ 10 }3^{ 17 }$, $2^{ 22 }3^{ 9 }$, $2^{ 34 }3^{ 1 }$  & 3, \textbf{5} \\
\hline
89 & 10 & 1  & $2^{ 4 }3^{ 27 }$, $2^{ 16 }3^{ 19 }$, $2^{ 28 }3^{ 11 }$, $2^{ 40 }3^{ 3 }$ & 2\\
& & & $2^{ 7 }3^{ 25 }$, $2^{ 19 }3^{ 17 }$, $2^{ 31 }3^{ 9 }$, $2^{ 43 }3^{ 1 }$  & 7\\
& & & $2^{ 10 }3^{ 23 }$ , $2^{ 22 }3^{ 15 }$ , $2^{ 34 }3^{ 7 }$ & 2, 7\\
 \hline
\end{tabular}
\end{center}
\vspace{-0.6cm}

\caption{Cycle types of the form $[2^{t_2}, 3^{t_3}]$ that are ruled out by Lemma~\ref{lemma: main lemma} as excesses of symmetric $(v,k,1)$-coverings.}
\label{table: 2 and 3 cycles}
\end{table}

Lemma \ref{lemma: B3'} gives a concise expression for $C_p(B_3(a))$. We use this to prove Theorem \ref{theorem 2s and 3s p5} and Theorem \ref{theorem 2s and 3s}.

\begin{lemma} \label{lemma: B3'}
Let $a \geq 2$ be a positive integer and let $p$ be a prime. Then $C_p(B_3(a)) = (-1,-1)_p (-a-2,a-1)_p$.
\end{lemma}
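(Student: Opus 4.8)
The plan is to compute $C_p(B_3(a))$ directly from Definition~\ref{defn: Cp with determinants}, which only requires the determinants of the three principal minors of $B_3(a)$, and then to collapse the resulting product of Hilbert symbols using bilinearity \eqref{eqn: Hil split}, the square-reduction rule \eqref{eqn: Hil square}, and the Steinberg relation $(x,1-x)_p=1$. The last identity is not listed explicitly among \eqref{eqn: Hil square}--\eqref{eqn: Hil a b}, but it follows immediately from \eqref{eqn: Hil a b}: taking the two arguments to be $x$ and $1-x$ gives $(x,1-x)_p=(-x(1-x),1)_p=1$. Because this route uses only the determinant definition, rather than Lemma~\ref{lemma: Bn in terms of gn} which assumes $a>2$, it will apply uniformly for all $a\geq 2$.

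First I would record the determinants. A short direct evaluation gives $|X_1|=a$, $|X_2|=a^2-1$, and $|X_3|=|B_3(a)|=(a-1)^2(a+2)$, where $X_i$ denotes the $i$th principal minor; the last value agrees with Lemma~\ref{lemma: BBHMS dn} (with $h(a)=a-1$) and is nonzero for $a\geq 2$, so $B_3(a)$ is nondegenerate and $C_p(B_3(a))$ is defined. Substituting into Definition~\ref{defn: Cp with determinants} and discarding the square factor $(a-1)^2$ via \eqref{eqn: Hil square} yields
$$C_p(B_3(a))=(-1,-(a+2))_p\,(a,-(a^2-1))_p\,(a^2-1,-(a+2))_p.$$

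Next I would simplify. Combining the first and third symbols by \eqref{eqn: Hil split} gives $(-(a^2-1),-(a+2))_p$, and combining this with the middle symbol $(a,-(a^2-1))_p=(-(a^2-1),a)_p$ produces the compact form $C_p(B_3(a))=(1-a^2,-a(a+2))_p$. The key observation is that both arguments are ``one minus a square'': $1-a^2=(1-a)(1+a)$ and $-a(a+2)=1-(a+1)^2=(-a)(a+2)$. Expanding the first argument via \eqref{eqn: Hil split} and peeling off the factors that the Steinberg relation annihilates --- namely $(1+a,-a)_p=1$, $(a,1-a)_p=1$, and $(a+2,-(a+1))_p=1$ --- reduces the expression to $(1-a,-1)_p\,(a+2,a-1)_p$. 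Finally, writing $(1-a,-1)_p=(-1,-1)_p(-1,a-1)_p$ and recombining with $(a+2,a-1)_p$ by \eqref{eqn: Hil split} gives $C_p(B_3(a))=(-1,-1)_p(-(a+2),a-1)_p=(-1,-1)_p(-a-2,a-1)_p$, as required.

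I expect the main obstacle to be purely organisational: keeping track of the signs and square factors so that the bilinear expansion is grouped into exactly those consecutive pairs $(x,1-x)_p$ that the Steinberg relation kills. Concretely, the delicate points are recognising the identities $-a(a+2)=1-(a+1)^2$ and $(1-a^2)(a-1)=-(a-1)^2(a+1)$, since these are precisely what force the extraneous Hilbert symbols to collapse; everything else is routine bilinear bookkeeping that is valid at every prime $p$.
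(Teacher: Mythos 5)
Your proof is correct, but it takes a genuinely different route from the paper's. You compute $C_p(B_3(a))$ directly from Definition~\ref{defn: Cp with determinants}, using the three principal minors $a$, $a^2-1$ and $(a-1)^2(a+2)$, and then collapse the resulting product of Hilbert symbols with \eqref{eqn: Hil split}, \eqref{eqn: Hil square} and the Steinberg relation $(x,1-x)_p=1$, which you rightly observe is an immediate consequence of \eqref{eqn: Hil a b} together with $(c,1)_p=1$; I checked the chain $(1-a^2,-a(a+2))_p=(1-a,-1)_p\,(a+2,a-1)_p=(-1,-1)_p\,(-a-2,a-1)_p$, and each of your three Steinberg applications, $(1+a,-a)_p=1$, $(a,1-a)_p=1$ and $(a+2,-(a+1))_p=1$, involves only nonzero arguments when $a\geq 2$, so the argument is watertight. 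The paper argues instead by rational congruence: it borders the matrix, setting $Y'=\diag(B_3(a),-1)$, performs row and column operations to obtain $Y''$ whose third principal minor is $(a-1)I_3$, and then uses the recursions \eqref{eqn: cp recursion} and \eqref{eqn: cp block diag} together with $C_p((a-1)I_3)=(-1,-1)_p$ and $|Y''|=-(a+2)(a-1)^2$ to arrive at the same expression. What your approach buys is self-containedness at the level of symbol identities: no matrix manipulations are needed, and the minor-by-minor computation is feasible precisely because $n=3$ gives only three minors. What the paper's approach buys is uniformity with the rest of its machinery: the same bordering-plus-row-reduction trick drives Lemmas~\ref{lemma: Bn in terms of Bn*} and~\ref{lemma: Bn* stuff}, where a direct computation of all principal minors would not scale to general $n$, and it avoids invoking the Steinberg relation, which the paper never states explicitly. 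Both arguments are valid for every prime $p$ and every integer $a\geq 2$.
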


\begin{proof}
Let $Y'$ be the matrix $\diag(B_3,-1)$. Note that $|B_3|=(a+2)(a-1)^2$. Then
\begin{align*}
C_p(B_3)  &= C_p(Y')(|Y'|, -|B_3|)_p && \hbox{by rearranging \eqref{eqn: cp
recursion}} \\
&= C_p(Y')(-|B_3|, -|B_3|)_p  && \hbox{since $|Y'|=-|B_3|$} \\
&= C_p(Y')(-a-2,-1)_p && \hbox{by \eqref{eqn: Hil a a} since $|B_3|=(a+2)(a-1)^2$}.
\end{align*}
Let $Y''$ be the matrix obtained from $Y'$ by adding the last row to all other
rows and then adding the last column to all other columns. Note that the $3$rd
principal minor of $Y''$ is $(a-1)I_3$ and that by applying~\eqref{eqn: cp
block diag} twice, we get that $C_p((a-1)I_3)=(-1,-1)_p$. Using the equation
above, we have
\begin{align*}
C_p(B_3) &= C_p(Y'')(-a-2,-1)_p && \hbox{since $Y'' \sim Y'$} \\
 &= C_p((a-1)I_3) (|Y''|, 1-a)_p(-a-2,-1)_p  && \hbox{by \eqref{eqn: cp recursion} since $|(a-1)I_3|=(a-1)^3$} \\
 &= C_p((a-1)I_3) (-a-2,1-a)_p(-a-2,-1)_p  && \hbox{since $|Y''| = |Y'| = -(a+2)(a-1)^2$} \\
 &= (-1,-1)_p (-a-2,1-a)_p(-a-2,-1)_p  && \hbox{since $C_p((a-1)I_3)=(-1,-1)_p$} \\
 &= (-1,-1)_p(-a-2,a-1)_p && \hbox{by \eqref{eqn: Hil split}.}
\end{align*}
\end{proof}

\begin{theorem} \label{theorem 2s and 3s p5}
Let $t_2$, $t_3$, $\lambda$ and $k$ be positive integers such that $k- \lambda > 2$, $\lambda$ is not divisible by $5$ and $2t_2+3t_3 = \frac{k(k-1)-2}{\lambda}+1$. There does not exist a symmetric $(2t_2+3t_3,k, \lambda)$-covering
whose excess consists of $t_2$ cycles of length $2$ and $t_3$ cycles of length
$3$ if
\begin{itemize}
	\item[(i)] $k - \lambda = 5^\alpha s+1$ where $\alpha$ is odd, $s 
\not\equiv 0 \mod{5}$, and $t_3$ is odd; or
	\item[(ii)] $k - \lambda = 5^\alpha s+2$ where $\alpha$ is odd, $s 
\not\equiv 0 \mod{5}$, $t_2$ is odd and $\lambda \equiv 1,4 \mod{5}$; or
	\item[(iii)] $k - \lambda = 5^\alpha s-2$ where $\alpha$ is odd, $s 
\not\equiv 0 \mod{5}$, $t_2+t_3$ is odd and $\lambda \equiv 1,4 \mod{5}$.
\end{itemize}
\end{theorem}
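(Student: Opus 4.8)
The plan is to run the standard argument of the paper for the prime $p=5$. Set $a=k-\lambda$ and $X=X_{(2t_2+3t_3,k,\lambda)}[2^{t_2},3^{t_3}]$. We may assume $|X|$ is a perfect square, since otherwise Proposition \ref{prop: matrix form} already establishes nonexistence, and then by Lemma \ref{lemma: main lemma} it suffices to prove that $C_5(X)=-1$. In the notation of Lemma \ref{lem: full general cp}, the cycle type $[2^{t_2},3^{t_3}]$ consists of $t=t_2+t_3$ cycles, of which $e=t_2$ are even, so that lemma yields
\[
C_5(X)=f_5(a,\lambda,t_2+t_3,t_2)\;C_5(B_2(a))^{t_2}\;C_5(B_3(a))^{t_3}.
\]
The entire proof reduces to evaluating these three factors at $p=5$ under the hypotheses of each case.

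First I would record the two building-block invariants. Lemma \ref{lemma: B3'} gives $C_5(B_3(a))=(-1,-1)_5(-a-2,a-1)_5$, and applying Lemma \ref{lemma: Bn in terms of gn} with $n=2$ (where $g_1=1$ and $g_2=a$) together with \eqref{eqn: Hil square} gives $C_5(B_2(a))=(-a,-(a^2-4))_5$. The key structural observation is that the three cases are precisely the three ways a factor of $5$ can enter: in case (i) it is $a-1$ that is divisible by $5$ (so $a\equiv 1\bmod 5$), in case (ii) it is $a-2$ (so $a\equiv 2$), and in case (iii) it is $a+2$ (so $a\equiv -2$). Now $a-1$ appears only in $C_5(B_3(a))$, while $a\pm2$ enter $C_5(B_2(a))$ through $a^2-4$, and $a+2$ additionally enters $C_5(B_3(a))$ through $-a-2$. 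Hence the location of the divisibility dictates which of the two invariants is $-1$: only $C_5(B_3(a))$ in case (i), only $C_5(B_2(a))$ in case (ii), and both in case (iii). This is exactly what pairs with the parity hypotheses on $t_3$, $t_2$, and $t_2+t_3$ respectively.

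Next I would show $f_5=1$ in every case. The relevant facts at $p=5$ are that $2$ and $3$ are quadratic non-residues while $1$ and $4$ are residues, that $\left(\frac{-1}{5}\right)=1$ because $5\equiv 1\bmod 4$, and consequently that $\lambda\equiv 1,4\bmod 5$ is equivalent to $\left(\frac{-\lambda}{5}\right)=1$ (note $\lambda$ is coprime to $5$ by hypothesis). With these, the factors $(-1,-1)_5^{t-1}$, $(a+2,-1)_5^{\binom{t-e}{2}}$, $(a^2-4,-1)_5^{\binom{e}{2}}$ and $(a+2,a^2-4)_5^{e(t-e)}$ all collapse to $1$ via \eqref{eqn: Hil Legendre p}, using $\left(\frac{-1}{5}\right)=1$ and the residues of $a\pm2$. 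The remaining factor $(-\lambda,(a+2)^t(a-2)^e)_5$ equals $1$ because in case (i) its second argument is coprime to $5$, whereas in cases (ii) and (iii) it reduces to a power of $\left(\frac{-\lambda}{5}\right)=1$, which is where the hypothesis $\lambda\equiv 1,4\bmod 5$ is used. Once $f_5=1$ is in hand, the displayed formula becomes $C_5(X)=C_5(B_2(a))^{t_2}C_5(B_3(a))^{t_3}$; substituting the pairs $(C_5(B_2),C_5(B_3))=(1,-1),(-1,1),(-1,-1)$ gives $C_5(X)=(-1)^{t_3},(-1)^{t_2},(-1)^{t_2+t_3}$ in cases (i), (ii), (iii), and the parity hypotheses make each of these equal to $-1$, as required.

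The hard part will be purely the bookkeeping: one must correctly read off the $5$-factorisations of $a$, $a+2$, $a-2$, $a^2-4$ and $a-1$ in each case, in particular tracking which one carries the factor $5^{\alpha}$ with $\alpha$ odd and which are units modulo $5$, and then apply \eqref{eqn: Hil Legendre p} without slips. There is no conceptual obstacle beyond this careful case analysis, the coprimality of $\lambda$ to $5$ ensuring that every symbol involving $-\lambda$ behaves well.
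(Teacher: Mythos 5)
Your proposal is correct and follows essentially the same route as the paper's proof: reduce to showing $C_5(X)=-1$ via Lemma \ref{lemma: main lemma}, expand $C_5(X)$ with Lemma \ref{lem: full general cp}, use Lemma \ref{lemma: B3'} and the identity $C_5(B_2(a))=(-a,4-a^2)_5$, and then check residues of $a$, $a\pm 2$ and $\lambda$ modulo $5$ in each case. The only differences are cosmetic: you obtain $C_5(B_2(a))$ from Lemma \ref{lemma: Bn in terms of gn} rather than directly from Definition \ref{defn: Cp with determinants}, and you explicitly carry out the case verification (correctly, with $(C_5(B_2),C_5(B_3))=(1,-1),(-1,1),(-1,-1)$ in cases (i)--(iii)) that the paper dismisses as ``routine to check.''
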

\begin{proof}
Suppose that one of (i), (ii) or (iii) holds. Let $X=X_{(2t_2+3t_3,k,1)}[2^{t_2},3^{t_3}]$. We may assume that $|X|$ is a perfect square for otherwise we are finished by Proposition \ref{prop: matrix form}. Let $a = k - \lambda$. In the rest of the proof we often use the fact that $-1 \equiv 2^2 \mod{5}$.  By Lemma \ref{lem: full general cp},
\begin{align*}
f_5(a,\lambda,t_2+t_3,t_2) &= (a+2,a^2-4)^{t_2t_3}_5(-\lambda,(a+2)^{t_2+t_3}
(a-2)^{t_2})_5\\
&= (-\lambda,(a+2)^{t_2+t_3} (a-2)^{t_2})_5,
\end{align*}
where the last equality follows because $(a+2, a^2-4)_5 = (a+2, -1)_5(a+2,a-2)_5 = 1$, which is derived using \eqref{eqn: Hil split} and \eqref{eqn: Hil a a} and by checking each equivalence class of $a$ modulo $5$.

Observe that $C_p(B_2(a)) = (-a, 4-a^2)_p$. By Lemma \ref{lem: full general cp} and Lemma \ref{lemma: B3'},
\begin{align*}
C_5(X) &= (-\lambda,(a+2)^{t_2+t_3} (a-2)^{t_2})_5C_5(B_2(a))^{t_2} C_5(B_3(a))^{t_3} \\
&= (-\lambda, (a+2)^{t_2+t_3} (a-2)^{t_2})_5(-a, 4-a^2)_5^{t_2} (-a-2, a-1)_5^{t_3}.
\end{align*}
It is routine to check that, when one of
(i), (ii) or (iii) holds, $C_5(X) = -1$ and the result follows from Lemma
\ref{lemma: main lemma}.
\end{proof}

If none of (i), (ii) or (iii) holds, then $C_5(X) = 1$ and Lemma \ref{lemma: 
main lemma} cannot be used to rule out the existence of a 
$(2t_2+3t_3,k,1)$-covering with  excess having cycle type $[2^{t_2}, 3^{t_3}]$.

Observe that Theorem \ref{theorem 2s and 3s p5} rules out every cycle type $[2^{t_2}, 3^{t_3}]$ as a possible excess for a symmetric $(41, 7, 1)$-covering (see Table \ref{table: 2 and 3 cycles}).  This generalises to a direct corollary of Theorem \ref{theorem 2s and 3s p5}(i), which rules out any excess of cycle type $[2^{t_2}, 3^{t_3}]$ for an infinite family of symmetric $(v,k,1)$-coverings.

\begin{corollary} \label{c: infinite family 2s 3s prime5}
If $v$ and $k$ are positive integers such that $k \equiv 7,12,17,22 \mod {25}$ 
and $v  = k(k-1)-1$, then there does not exist a symmetric $(v, k, 1)$-covering 
with  excess consisting of $2$- and $3$-cycles.
\end{corollary}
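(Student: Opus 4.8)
The plan is to read this off from Theorem~\ref{theorem 2s and 3s p5}(i) by specialising to $\lambda=1$ and checking that its hypotheses are automatically satisfied. Setting $\lambda=1$ gives $v=k(k-1)-1=\frac{k(k-1)-2}{\lambda}+1$, exactly the required relation, and $a:=k-\lambda=k-1$. Hypothesis (i) requires $k-\lambda=5^{\alpha}s+1$ with $\alpha$ odd and $s\not\equiv 0\mod 5$; with $\lambda=1$ this is precisely the statement that the $5$-factorisation of $k-2$ is $s\,5^{\alpha}$ with $\alpha$ odd and $5\nmid s$.

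First I would dispose of the arithmetic conditions. From $k\equiv 7,12,17,22\mod{25}$ we get $k-2\equiv 5,10,15,20\mod{25}$ respectively, so in every case $5\mid(k-2)$ but $25\nmid(k-2)$; hence $\alpha=1$ (which is odd) and $s=(k-2)/5\not\equiv 0\mod 5$, as needed. The standing hypotheses of the theorem are equally immediate: $\lambda=1$ is not divisible by $5$, and $k-\lambda=k-1>2$ since the congruence classes force $k\ge 7$.

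The only substantive point---and the step I would treat as the crux---is showing that $t_3$ is necessarily odd, so that condition (i) applies to \emph{every} admissible cycle type rather than just some. I would argue by parity: $k(k-1)$ is a product of consecutive integers and hence even, so $v=k(k-1)-1$ is odd. An excess built from $2$- and $3$-cycles has cycle type $[2^{t_2},3^{t_3}]$ with $2t_2+3t_3=v$; since $2t_2$ is even and $v$ is odd, $3t_3$ must be odd and therefore $t_3$ is odd. Consequently all the hypotheses of Theorem~\ref{theorem 2s and 3s p5}(i) hold, and the theorem forbids the covering.

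Finally I would note the boundary case $t_2=0$, which the theorem does not cover verbatim since it assumes $t_2$ positive. Here the excess consists only of $3$-cycles, and the computation of $C_5(X)$ carried out in the proof of Theorem~\ref{theorem 2s and 3s p5} remains valid with $t_2=0$: the factors carrying a $t_2$-exponent collapse to $1$, and with $t_3$ odd the remaining product still evaluates to $C_5(X)=-1$ (this case is also addressed by the results of Bose and Connor~\cite{BC52}). Since $v$ odd rules out the other boundary case $t_3=0$ entirely, every excess composed of $2$- and $3$-cycles is eliminated, proving the corollary.
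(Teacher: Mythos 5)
Your proof is correct and takes essentially the same route as the paper: the corollary is read off from Theorem~\ref{theorem 2s and 3s p5}(i) with $\lambda=1$, using that $k \equiv 7,12,17,22 \mod{25}$ forces $k-2=5s$ with $5 \nmid s$ (so $\alpha=1$ is odd), and that $v=k(k-1)-1$ is odd, which makes $t_3$ odd for every admissible cycle type $[2^{t_2},3^{t_3}]$. Your handling of the $t_2=0$ boundary case is valid but unnecessary, since $v = k(k-1)-1 \equiv 1,2 \mod{3}$ means an excess consisting solely of $3$-cycles cannot partition the $v$ vertices, so that case never arises.
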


Observing the examples in Table~\ref{table: 2 and 3 cycles}, we see that
a symmetric $(55,8,1)$-covering also cannot have excess consisting only of $2$- and $3$-cycles. However, Theorem~\ref{theorem 2s and 3s p5}(ii) establishes this only when there is an odd number of cycles of length $2$. To rule out the excess types with even number of $2$-cycles, we employ Lemma \ref{lemma: main lemma} with $p=2$ in Lemma \ref{theorem 2s and 3s} below. This enables us to give another infinite family of parameters for which there does not exist a symmetric covering with excess having only $2$- and $3$-cycles.

\begin{lemma} \label{theorem 2s and 3s}
Let $t_2$, $t_3$ and $k$ be positive integers such that $k > 3$
and $2t_2+3t_3 = k(k-1)-1$. There does not exist a symmetric
$(2t_2+3t_3,k,1)$-covering whose excess consists of $t_2$ cycles of length $2$
and $t_3$ cycles of length $3$ if
\begin{itemize}
	\item[(i)] $k \equiv 0 \mod{4}$ and $t_3\equiv 1 \mod{4}$; or
	\item[(ii)] $k \equiv 1 \mod{4}$ and $t_3\equiv 5 \mod{8}$.
\end{itemize}
\end{lemma}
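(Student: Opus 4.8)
The plan is to invoke Lemma~\ref{lemma: main lemma} with $p=2$. Since $\lambda=1$ here, I set $a=k-\lambda=k-1$ and write $X=X_{(2t_2+3t_3,k,1)}[2^{t_2},3^{t_3}]$, assuming that $|X|$ is a perfect square (otherwise the covering is already ruled out by Proposition~\ref{prop: matrix form}). Because $C_2(I_v)=-1$, Lemma~\ref{lemma: main lemma} tells us it suffices to prove $C_2(X)=+1$ under either hypothesis~(i) or~(ii). A preliminary observation that I will use repeatedly is that $v=2t_2+3t_3=k(k-1)-1$ is odd, which forces $3t_3$, and hence $t_3$, to be odd; this lets me collapse every even power of the sign $(-1,-1)_2=-1$ and replace $C_2(B_3(a))^{t_3}$ by $C_2(B_3(a))$.

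The first substantive step is to apply Lemma~\ref{lem: full general cp} with $t=t_2+t_3$ and $e=t_2$ (the $2$-cycles being the even cycles), giving
\[
C_2(X)=f_2(a,1,t_2+t_3,t_2)\,C_2(B_2(a))^{t_2}\,C_2(B_3(a))^{t_3}.
\]
I then evaluate the three ingredients as explicit $2$-adic Hilbert symbols. For the cycle factors I would use $C_2(B_2(a))=(-a,4-a^2)_2$ (read off from Definition~\ref{defn: Cp with determinants} exactly as in the proof of Theorem~\ref{theorem 2s and 3s p5}) and, combining Lemma~\ref{lemma: B3'} with $(-1,-1)_2=-1$, the identity $C_2(B_3(a))=-(-a-2,a-1)_2$. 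For $f_2$ I expand its defining product, using $(-1,-1)_2=-1$ and splitting the final factor as $(-1,a+2)_2^{t_2+t_3}(-1,a-2)_2^{t_2}$ via \eqref{eqn: Hil split}.

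Next I reduce all exponents modulo the periods of the relevant symbols. Since $t_3$ is odd, $(-1,-1)_2^{t_2+t_3-1}=(-1)^{t_2}$, and the binomial exponents $\binom{t_3}{2}$ and $\binom{t_2}{2}$, together with the cross exponent $t_2t_3$, depend only on $t_2$ and $t_3$ modulo $4$. Here the constraint $2t_2+3t_3=k(k-1)-1$ is essential: combined with the assumptions on $k$ and $t_3$ it pins down $t_2$ modulo $4$. In case~(i), $k\equiv 0\mod 4$ gives $v\equiv 3\mod 4$ and, with $t_3\equiv 1\mod 4$, forces $t_2$ even; while in case~(ii), tracking $k\equiv 1\mod 4$ and $t_3\equiv 5\mod 8$ through $2t_2=v-3t_3$ fixes $t_2\mod 4$ according to $k\mod 8$. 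I then substitute the prescribed residues for $a=k-1$: $a\equiv 3\mod 4$ in case~(i), where $a,a\pm 2,a^2-4$ are all odd (only $a-1$ is even, of $2$-adic valuation $1$), and $a\equiv 0\mod 4$ in case~(ii), where $a$ is divisible by $4$, and verify that the accumulated sign is $+1$.

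The main obstacle is the $2$-adic bookkeeping in case~(ii). Unlike the odd-prime arguments of Theorems~\ref{theorem same cycle length} and~\ref{theorem 2s and 3s p5}, the formula \eqref{eqn: Hil Legendre 2} for $(a,b)_2$ depends on residues modulo $8$ and on the $2$-adic valuations of both arguments, so I must record not merely $a\mod 8$ but the exact valuation of $a=k-1$, as well as the valuations of $a+2$, $a-2$ and $a^2-4$. The delicate feature is that when $k\equiv 1\mod 8$ one has $v_2(a)\geq 3$ and $t_2\equiv 0\mod 4$, whereas when $k\equiv 5\mod 8$ one has $v_2(a)=2$ and $t_2\equiv 2\mod 4$; the precise hypothesis $t_3\equiv 5\mod 8$ is exactly what makes these valuation-dependent contributions conspire against the $t_2$-dependent exponents so that the net invariant is $+1$ in both subcases. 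Once this correlation is made explicit, evaluating each Hilbert symbol and multiplying the three ingredients is a finite verification, and nonexistence then follows from Lemma~\ref{lemma: main lemma}; the points most in need of care are the cross term $(a+2,a^2-4)_2^{t_2t_3}$ and the coupling of $v_2(a)$ with $t_2\mod 4$.
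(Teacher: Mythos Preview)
Your proposal is correct and follows essentially the same route as the paper: apply Lemma~\ref{lemma: main lemma} at $p=2$, feed the cycle type into Lemma~\ref{lem: full general cp}, evaluate $C_2(B_2(a))$ and $C_2(B_3(a))$ (the latter via Lemma~\ref{lemma: B3'}), and in case~(ii) split according to $k\bmod 8$ to track the correlation between $v_2(a)$ and $t_2\bmod 4$. The only difference is organisational: the paper observes at the outset that in \emph{both} cases $t_3\equiv 1\mod 4$ and $t_2$ is even, which immediately annihilates $C_p(B_2(a))^{t_2}$, the cross term $(a+2,a^2-4)_p^{t_2t_3}$, and the $\binom{t_3}{2}$ exponent, leaving the compact expression $C_p(X)=(k^2-2k-3,-1)_p^{t_2/2}(-k-1,-k+2)_p$ before specialising to $p=2$; you instead carry all factors through and rely on the final case check to collapse them, which works but is heavier bookkeeping.
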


\begin{proof}
Let $X=X_{(2t_2+3t_3,k,1)}[2^{t_2},3^{t_3}]$. We may assume that $|X|$ is a
perfect square for otherwise we are finished by Proposition \ref{prop: matrix
form}. By Lemma \ref{lemma: main lemma}, it suffices to establish that
$C_{2}(X)=1$. Suppose that (i) or (ii) holds. Then $t_3 \equiv 1 \mod{4}$, $2t_2+3t_3=
k(k-1)-1 \equiv 3 \mod{4}$ and so $t_2$ is even. Thus, by
Lemma \ref{lem: full general cp}, for any prime $p$,
\begin{align*}
C_{p}(X) &= (k^2-2k-3, -1)_p^{t_2/2}(-1,k+1)_{p} C_{p}(B_3(k-1))
\\
&= (k^2-2k-3, -1)_p^{t_2/2}(-1,k+1)_{p} (-1,-1)_p(-k-1,k-2)_p&& \hbox{by Lemma
\ref{lemma: B3'}} \\
&= (k^2-2k-3, -1)_p^{t_2/2} (-k-1,-1)_p(-k-1,k-2)_p && \hbox{by \eqref{eqn: Hil
split}} \\
&= (k^2-2k-3, -1)_p^{t_2/2} (-k-1,-k+2)_p&& \hbox{by \eqref{eqn: Hil split}.}
\end{align*}
We can now establish that $C_{2}(X)=1$ by specialising this
equation to the case $p=2$ and applying \eqref{eqn: Hil Legendre 2}, considering
the cases $k \equiv 0 \mod{4}$, $k \equiv 1 \mod{8}$ and $k \equiv 5 \mod{8}$
separately. In the first case, $(-k-1,-k+2)_2=(k^2-2k-3, -1)_2=1$.
In the second case $(-k-1,-k+2)_2=1$ and it follows from $2t_2+3t_3=k(k-1)-1$ that $t_2 \equiv 0 \mod{4}$. In the third case $(-k-1,-k+2)_2=(k^2-2k-3, -1)_2=-1$ and it follows from $2t_2+3t_3= k(k-1)-1$ that $t_2 \equiv 2 \mod{4}$.
\end{proof}

Observe that under the hypotheses of Lemma~\ref{theorem 2s and 3s} if $k
\equiv 1 \mod{4}$ but $t_3 \equiv 1 \mod{8}$ then $C_2(X[2^{t_2},3^{t_3}]) = -1$
and Lemma \ref{lemma: main lemma} cannot be used to rule out the existence of a
$(2t_2+3t_3,k,1)$-covering with such an excess.  We also remark that the proof
of Lemma~\ref{theorem 2s and 3s} easily extends to rule out the existence of a
$(v,k,1)$-covering with excess having $v$-feasible cycle type $[2^{t_2},
3^{t_3}, c_1^{m_1}, \dots, c_{t}^{m_t}]$ where $m_i \equiv 0 \mod{4}$ for all $1
\le i \le t$.

In Lemma~\ref{theorem 2s and 3s}, if $k \equiv 0,1
\mod{4}$, then $k(k-1)-1$ is odd and hence $t_3$ is odd. Therefore, for a fixed 
$k$, parts (i) and (ii) of the Lemma~\ref{theorem 2s and 3s} rule out, 
respectively, about a half and a quarter of the feasible cycle types of the form 
$[2^{t_2}, 3^{t_3}]$.

The following corollary is a straightforward application of
Theorem~\ref{theorem 2s and 3s p5}(ii) and Lemma~\ref{theorem 2s and 3s}(i)
(note that Lemma~\ref{theorem 2s and 3s}(i) applies whenever $k \equiv 0 \mod{4}$ and $t_2$ is even).

\begin{corollary}  \label{c: infinite family 2s 3s primes 2 5}
If $k$ is a positive integer such that $k \equiv 8, 48, 68, 88 \mod{100}$ and $v  = k(k-1)-1$, then there does not exist a symmetric $(v, k, 1)$-covering with excess consisting of 2- and 3-cycles.
\end{corollary}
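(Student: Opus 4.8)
The plan is to rule out every admissible excess by splitting on the parity of $t_2$, where the excess is assumed to consist of $t_2$ cycles of length $2$ and $t_3$ cycles of length $3$, so that $2t_2+3t_3=v=k(k-1)-1$. Before splitting I would first dispose of the degenerate cycle types, so that both of the results to be quoted (which require $t_2,t_3\geq 1$) are actually applicable. Since $k\equiv 8,48,68,88 \mod{100}$ is even, $k(k-1)$ is even and hence $v=k(k-1)-1$ is odd; this excludes $t_3=0$ (an excess of $2$-cycles only would force $v$ even). Moreover $k(k-1)\equiv 0$ or $2\mod 3$ for every integer $k$, so $v=k(k-1)-1\not\equiv 0\mod 3$, which excludes $t_2=0$ (an excess of $3$-cycles only would force $3\mid v$). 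Thus any excess composed of $2$- and $3$-cycles has $t_2\geq 1$ and $t_3\geq 1$.

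For the case $t_2$ odd, the plan is to invoke Theorem~\ref{theorem 2s and 3s p5}(ii) with $\lambda=1$ and $p=5$. The verification is routine: I would write $k-\lambda=k-1=5^\alpha s+2$, i.e. $k-3=5^\alpha s$, and observe that each of the residues $8,48,68,88\mod{100}$ satisfies $k\equiv 3\mod 5$ but $k\not\equiv 3\mod{25}$, so that the $5$-factorisation of $k-3$ is $s\cdot 5^1$ with $s\not\equiv 0\mod 5$ and $\alpha=1$ is odd. Since also $\lambda=1\equiv 1\mod 5$ and $t_2$ is odd, all hypotheses of part (ii) hold and nonexistence follows.

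For the case $t_2$ even, the plan is to invoke Lemma~\ref{theorem 2s and 3s}(i), using the fact noted in the paper that this part applies whenever $k\equiv 0\mod 4$ and $t_2$ is even. Each of $8,48,68,88$ is divisible by $4$, so $k\equiv 0\mod 4$ and hence $v=k(k-1)-1\equiv 3\mod 4$; with $t_2$ even this gives $3t_3\equiv 3\mod 4$, so $t_3\equiv 1\mod 4$, and the hypotheses of part (i) are met. Combining the two parity cases yields the corollary.

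The only point requiring genuine care, rather than pure quotation of the earlier results, is confirming that the single hypothesis $k\equiv 8,48,68,88\mod{100}$ simultaneously supplies all the required preconditions. I would check this by reducing $\{8,48,68,88\}$ modulo $25$ (to get $\alpha=1$ odd and $s\not\equiv 0\mod 5$ for Theorem~\ref{theorem 2s and 3s p5}(ii)), modulo $4$ (to get $k\equiv 0\mod 4$ for Lemma~\ref{theorem 2s and 3s}(i)), and modulo $3$ (to exclude the all-$3$-cycle excess). None of these is a real obstacle; the content of the argument is entirely in the bookkeeping that shows the two quoted results cover, respectively, the odd-$t_2$ and even-$t_2$ halves of the problem.
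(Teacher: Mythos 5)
Your proof is correct and follows essentially the same route as the paper: the paper also disposes of odd $t_2$ via Theorem~\ref{theorem 2s and 3s p5}(ii) (with $\alpha=1$ coming from $k\equiv 3 \bmod 5$, $k\not\equiv 3 \bmod{25}$) and of even $t_2$ via Lemma~\ref{theorem 2s and 3s}(i), using exactly the observation that $k\equiv 0 \bmod 4$ and $t_2$ even force $t_3\equiv 1 \bmod 4$. Your preliminary elimination of the degenerate types $t_2=0$ and $t_3=0$ (via $v$ odd and $v\not\equiv 0 \bmod 3$) is a careful extra step the paper leaves implicit, since its quoted results require $t_2,t_3\geq 1$.
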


\section{Conclusion}

In Sections~\ref{sec: p divides a}--\ref{sec: 2 and 3 cycles}, we ruled out the existence of several infinite families of symmetric $(v,k,\lambda)$-coverings with particular types of excesses using Lemma~\ref{lemma: main lemma}.
Observe that Theorem~\ref{thm: BBHMS BRC} rules out the existence of infinitely many symmetric $(v,k,\lambda)$-coverings with $2$-regular excess when $v$ is even. However, when $v$ is odd and $k > \lambda +2$, the following problem remains open.

\begin{open}
Are there infinitely many parameter sets $(v,k,\lambda)$ where 
$3 \leq \lambda+2 < k < v$, $v = \frac{k(k-1)-2}{\lambda}+1$ and $v$ is 
odd, for which there is no symmetric 
$(v,k,\lambda)$-covering?
\end{open}

Since cyclic symmetric coverings are of particular interest and have applications in related fields of study, as mentioned in Section~\ref{sec: computational results}, we note that the following problem remains open as well.

\begin{open}
Are there infinitely many parameter sets $(v,k,\lambda)$ where 
$3 \leq \lambda+2 < k < v$, $v = \frac{k(k-1)-2}{\lambda}+1$ and $v$ is 
odd, for which there is no cyclic symmetric 
$(v,k,\lambda)$-covering?
\end{open}

Obviously, an affirmative answer to the first question would answer both 
questions in the affirmative and a negative answer to the second question would 
answer both questions in the negative.

\vspace{0.3cm} \noindent{\bf Acknowledgements}

The authors were supported by Australian Research Council grants DE120100040 and
DP150100506.

\end{document}